\renewcommand{\section}{\@startsection
	{section}{1}{0mm}{5mm}{2mm}{\raggedright\bfseries}}
\theoremstyle{plain} 
\newtheorem{theorem}{\indent\sc Theorem}[section] 
\newtheorem{lemma}[theorem]{\indent\sc Lemma}
\newtheorem{corollary}[theorem]{\indent\sc Corollary}
\newtheorem{proposition}[theorem]{\indent\sc Proposition}
\newtheorem{conjecture}[theorem]{\indent\sc Conjecture}
\newtheorem{hypothesis}[theorem]{\indent\sc Hypothesis}
\theoremstyle{definition} 
\newtheorem{definition}[theorem]{\indent\sc Definition}
\newtheorem{remark}[theorem]{\indent\sc Remark}
\newtheorem{example}[theorem]{\indent\sc Example}
\def\rmB{{\mathrm B}}
\def\rmE{{\mathrm E}}
\def\rmH{{\mathrm H}}
\def\rmR{{\mathrm R}}
\def\bfC{{\mathbf C}}
\def\bfF{{\mathbf F}}
\def\bfG{{\mathbf G}}
\def\bfP{{\mathbf P}}
\def\bfQ{{\mathbf Q}}
\def\bfR{{\mathbf R}}
\def\bfZ{{\mathbf Z}}
\def\calB{{\mathcal B}}
\def\calC{{\mathcal C}}
\def\calF{{\mathcal F}}
\def\calG{{\mathcal G}}
\def\calK{{\mathcal K}}
\def\calO{{\mathcal O}}
\def\calU{{\mathcal U}}
\def\calV{{\mathcal V}}
\def\scrE{{\mathscr E}}
\def\scrG{{\mathscr G}}
\def\scrM{{\mathscr M}}
\def\frakd{{\mathfrak d}}
\def\frakt{{\mathfrak t}}
\def\fraku{{\mathfrak u}}
\def\frakH{{\mathfrak H}}
\def\be{\boldsymbol e}
\def\bk{\boldsymbol k}
\def\bo{\boldsymbol o}
\def\bp{\boldsymbol p}
\def\bq{\boldsymbol q}
\def\bu{\boldsymbol u}
\def\bv{\boldsymbol v}
\def\bz{\boldsymbol z}
\def\B+{\mathop{{\mathbf B}^+_{\mathbf{dR}}}\nolimits}
\def\D+{\mathop{D^0_{\mathbf{dR}}}\nolimits}
\def\RepCrys_F{\mathop{\mathbf{Rep}_{\bfQ_\ell}^{\mathbf{crys}}(G_F)}\nolimits}
\def\Isom{\mathrm{Isom}}
\def\End{\mathrm{End}}
\def\Aut{\mathrm{Aut}}
\def\Hom{\mathrm{Hom}}
\def\Ext{\mathrm{Ext}}
\def\Rep{{\mathrm{Rep}}}
\def\Vec{\mathop{\mathrm{Vec}}\nolimits}
\def\Gr{\mathrm{Gr}}
\def\Sym{\mathrm{Sym}}
\def\GL{\mathbf{GL}}
\def\SL{\mathbf{SL}}
\def\Lie{\mathrm{Lie}}
\def\std{\mathrm{std}}
\def\univ{\mathrm{univ}}
\def\dR{{\mathrm{dR}}}
\def\crys{{\mathrm{crys}}}
\def\reg{{\mathrm{reg}}}
\def\MT{\mathop{\mathsf{MT}}\nolimits}
\def\ME{\mathop{\mathsf{ME}}\nolimits}
\def\IRep_F{\mathop{\mathrm{Ind}\mathbf{Rep}_{\mathbf Q_p}^{\mathbf{crys}}(G_F)}\nolimits}
\def\pet{\mathop{{p\mathrm{\acute{e}t}}}\nolimits}
\def\cyc{\mathop{\mathrm{cyc}}\nolimits}
\def\spec{\mathrm{Spec}}
\def\Obj{{\mathrm{Obj}}}
\def\wtd{{\mathrm{wtd}}}
\def\o+{{\oplus}}
\def\bo+{{\bigoplus}}
\def\et{\mathop{\mathrm{\acute{e}t}}\nolimits}
\def\Obj{{\mathrm{Obj}}}
\def\unip{{\mathrm{un}}}
\def\la{\langle}
\def\ra{\rangle}
\def\pr{{\mathrm{pr}}}
\def\b0{{\boldsymbol 0}}
\def\ontomap{\twoheadrightarrow}
\def\intomap{\hookrightarrow}
\def\isom{\xrightarrow{\sim}}
\title[The depth-weight compatibility and the Bloch-Kato conjecture]
{The depth-weight compatibility on the motivic fundamental Lie algebra and the Bloch-Kato conjecture for modular forms}
\author{Kenji Sakugawa$^1$}
\address{$^1$Faculty of Education, Shinshu University, 6-Ro, Nishi-nagano, Nagano 380-8544, Japan.}
\email{sakugawa\_kenji@shinshu-u.ac.jp}
\subjclass[2020]{Primary 11M32; Secondary 11F11, 18M25}
\keywords{Mixed Tate motive, Multiple zeta value, Modular form, Selmer group}
\begin{document}
\maketitle
\begin{abstract}
	Let $p$ be a prime number and let $V$ be a continuous representation of $\mathrm{Gal}(\overline {\mathbf Q}/\mathbf Q)$
	on a finite dimensional $\mathbf Q_p$-vector space, which is geometric. One of the Bloch-Kato conjectures for $V$
	predicts that the rank of the Hasse-Weil $L$-function
	of $V$ at $s=0$ coincides with the rank
	of Bloch-Kato Selmer group of $V^\vee(1)$.
	In this paper, we prove that the depth-weight compatibility on the fundamental Lie algebra of the mixed Tate motives over $\mathbf Z$ implies the Bloch-Kato conjecture for
	the $p$-adic Galois representations associated with full-level Hecke eigen cuspforms.
\end{abstract}
\section{Introduction}\label{intro}
The relation between analytic and algebraic invariants of $p$-adic Galois representations
is one of the basic themes in Number theory.
The Bloch-Kato conjecture predicts
an explicit relation between the Hasse-Weil $L$-function and
the Bloch-Kato Selmer group of the given
$p$-adic Galois representation.
\begin{conjecture}[Bloch-Kato conjecture, {\cite[Conjecture 5.1.3]{BC06}}]\label{con0-1}
	Let $V$ be a geometric irreducible representation of $\mathrm{Gal}(\overline{\mathbf Q}/\mathbf Q)$
	over a $p$-adic field $\mathcal K$.
	Then, the Hasse-Weil $L$-function $L(V,s)$ of $V$ has a meromorphic continuation to $\mathbf C$ and the following equation
	holds$:$
	\[
	\mathrm{rank}_{s=0}L(V,s)=\dim_{\mathcal K}\mathrm H^1_{\mathrm f}(\mathbf Q,V^\vee(1))-\dim_{\mathcal K}\mathrm H^0(\mathbf Q,V^\vee(1)).
	\]
	Here, $V^\vee$ is the dual Galois representation to $V$
	and $\mathrm H^1_{\mathrm f}(\mathbf Q,V^\vee(1))$ is the Bloch-Kato Selmer group
	with coefficients in $V^\vee(1)$.
\end{conjecture}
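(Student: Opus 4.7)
The plan is to decompose Conjecture \ref{con0-1} into a meromorphic continuation claim and a rank equality, and to attack each by chaining conjectural inputs from the theories of motives and automorphic representations. Write $G_{\mathbf Q}=\Gal(\overline{\mathbf Q}/\mathbf Q)$ and fix an irreducible geometric representation $V$ of $G_{\mathbf Q}$ over the $p$-adic field ${\mathcal K}$.

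For the meromorphic continuation I would argue in three steps: (a) the Fontaine--Mazur conjecture lifts $V$, up to a Tate twist, to the $p$-adic realization of a simple pure motive $M$ over $\mathbf Q$; (b) Langlands reciprocity attaches to $M$ a cuspidal automorphic representation $\pi$ of some $\mathbf{GL}_n(\mathbf A_{\mathbf Q})$ with $L(V,s)=L(\pi,s-a)$ for an explicit shift $a$; (c) the Godement--Jacquet theory of standard $L$-functions then provides the meromorphic continuation of $L(\pi,s)$ to $\mathbf C$, entire unless $V$ is a Tate twist of the trivial representation. This yields the first assertion of the conjecture.

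For the rank equality I would pass through motivic cohomology and match each side via
\begin{equation*}
\mathrm{rank}_{s=0}L(V,s)\stackrel{(\mathrm B)}{=}\dim\mathrm H^1_{{\mathcal M}}(\mathbf Q,M^\vee(1))_{\mathbf Z}-\dim\mathrm H^0_{{\mathcal M}}(\mathbf Q,M^\vee(1))\stackrel{(\mathrm J)}{=}\dim\mathrm H^1_{\mathrm f}(\mathbf Q,V^\vee(1))-\dim\mathrm H^0(\mathbf Q,V^\vee(1)),
\end{equation*}
where (B) is Beilinson's conjecture with the integral refinement giving the order of vanishing at $s=0$ in terms of integral motivic cohomology of $M^\vee(1)$, and (J) is Jannsen's comparison conjecture asserting that the $p$-adic \'etale realization induces isomorphisms $\mathrm H^i_{{\mathcal M}}(\mathbf Q,M^\vee(1))_{\mathbf Z}\otimes_{\mathbf Q}{\mathcal K}\isom \mathrm H^i_{\mathrm f}(\mathbf Q,V^\vee(1))$ for $i=0,1$. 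Combining the two equalities delivers Conjecture \ref{con0-1} for $V$.

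The main obstacle is clear: each of (a), (b), (c), (B), (J) is itself a central open problem, so no unconditional proof of Conjecture \ref{con0-1} in the generality stated is currently available. Progress must come by isolating subfamilies of $V$ for which most of these inputs are already theorems, and by supplying the remaining ingredient through a structural mechanism. This paper adopts exactly that route for $V=V_f$ attached to full-level Hecke eigen cuspforms $f$: (a)--(c) are classical (Deligne, Shimura, Eichler) so meromorphic continuation is known, and the remaining content of (B)+(J) for $M_f^\vee(1)$ is traded for a structural property of the motivic fundamental Lie algebra $\mathfrak g^{\mot}_{\MT(\mathbf Z)}$ of mixed Tate motives over $\mathbf Z$, namely the depth--weight compatibility. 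The hard technical step in that program, where the paper's argument must concentrate, is the construction of a Tannakian comparison between $M_f$ and the mixed Tate package that lets depth--weight compatibility control the $\mathrm{Ext}^1$ computing $\mathrm H^1_{\mathrm f}(\mathbf Q,V_f^\vee(1))$ and so cut out the correct Selmer rank.
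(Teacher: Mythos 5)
The statement you were asked to prove is Conjecture~\ref{con0-1} --- the Bloch--Kato conjecture itself --- which the paper does \emph{not} prove: it is cited as an open conjecture, and the paper's actual contribution (Theorem~\ref{thm0-2}) is to establish it for the $p$-adic Galois representations $V_{f,p}$ attached to full-level Hecke eigencuspforms, conditionally on Hypothesis~\ref{hyp2-2}. You recognize this correctly, and your chain Fontaine--Mazur $\Rightarrow$ Langlands reciprocity $\Rightarrow$ Godement--Jacquet for meromorphic continuation, together with Beilinson's conjecture plus Jannsen's comparison for the rank equality, is a reasonable folklore decomposition of the full conjecture into known open problems; in that generality nothing more can be asked of you.

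Where your plan misdescribes the paper is in your characterization of the route for $V_f$. The paper does \emph{not} pass through Beilinson's conjecture or Jannsen's comparison, and it does not build a Tannakian comparison between the motive $M_f$ and the mixed Tate package. Instead it runs Kato's Euler-system machinery: Theorem~\ref{thm4-1} (Kato--Kurihara) converts non-torsionness of the specialization $z_1$ of the Euler system into finiteness of $\rmH^2(\bfZ[1/p],T)$, giving the required upper bound on the Selmer rank. The non-vanishing of enough $z_1(i,j)$'s is then proved (Theorem~\ref{thm4-5}, Proposition~\ref{prop4-9}) by identifying them, via $p$-adic Eisenstein symbols and Siegel units, with cup products $\varphi_{2i+2}\cup\varphi_{2j+2}$ of Eisenstein classes, and these cup products are bounded from below in $\rmH_2(\fraku^{\mathrm g}_+)^\vee$ using Brown's description of the relations among the canonical depth-one generators $\overline{\bsigma}_{2m+1}$ by restricted even period polynomials (Theorem~\ref{thm3-2}, Corollary~\ref{cor3-4}). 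Hypothesis~\ref{hyp2-2} enters only at Proposition~\ref{keyprop}, to identify $\mathfrak d^{\mathrm{odd}}\otimes\bfQ_p$ with $\fraku_+^{\mathrm g}$ so that Brown's theorem can be transported into the elliptic setting. So the depth--weight compatibility controls \emph{non-vanishing of Euler-system classes}, not the motivic-to-Selmer comparison directly; the Selmer upper bound comes from Euler systems, not from (B) or (J). Your sentence ``trades (B)+(J) for the depth--weight compatibility'' and the claim that the key technical step is a Tannakian comparison between $M_f$ and mixed Tate motives should be replaced by: the key steps are the Eisenstein-congruence identification of Proposition~\ref{keyprop} and the Euler-system/Eisenstein-class comparison of Theorem~\ref{thm4-5}.
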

When $V=V_p(E)$ for an elliptic curve $E$ over $\mathbf Q$,
this conjecture is equivalent to the weak BSD-conjecture under the finiteness of
the $p$-primary part of the Tate-Shafarevich group of $E$.
The purpose of this paper is to link multiple zeta values and the Bloch-Kato conjecture for modular forms,
in particular, to show an unexpected implication that ``depth$=$weight'' derives the conjectural equation above for Galois representations attached to full-level cuspforms.

A \emph{multiple zeta value} (MZV) is a real number expressed by a convergent multiple Dirichlet series
\[
\zeta(\bk)=\sum_{n_1>n_2>\cdots>n_d>0}\frac{1}{n_1^{k_1}n_2^{k_2}\cdots n_d^{k_d}},
\]
where $\bk=(k_1,\dots,k_d)\in \bfZ_{>0}^d,\ k_1>1$.
The 
number $d$ is called the \emph{depth} of $\bk$.
This  real number is a period of a mixed Tate motive over $\bfZ$, and conversely, by the famous theorem of Brown (\cite[Theorem 1.1]{Brown12}), any period of mixed Tate motives over $\bfZ$ can be written as a $\bfQ[(2\pi\sqrt{-1})^{-1}]$-linear
combination of MZVs.
Let $\MT(\mathbf Z)$ be the category of mixed Tate motives over $\mathbf Z$ and
let $\frakt_*$ denote the graded fundamental Lie algebra
of $\MT(\mathbf Z)$.
Brown's theorem implies that the graded dual of the universal enveloping algebra $\calU(\frakt_*)$
of $\frakt_*$ is naturally isomorphic to the space of motivic MZVs modulo motivic $\zeta(2)$.
Therefore, we can impose the \emph{depth filtration} $D^\bullet \frakt_*$ on it. See Definition \ref{dfn2-1} for a concrete description of $D^\bullet\frakt_*$.

In recent years, attempts have been made to extend $\MT(\bfZ)$ in order to gain a deeper understanding of $D^\bullet\frakt_*$.
Let $\scrM_{1,1}$ be the moduli stack of elliptic curves.
In \cite{HM15}, Hain and Matsumoto introduced the category $\mathsf{UMEM}$ of universal
mixed elliptic motives over $\scrM_{1,1}$ and studied basic properties of it.
Roughly speaking, this category is defined to be the minimal Tannakian subcategory of
systems of realizations over $\scrM_{1,1}$ that contains the universal elliptic curve and whose
objects have \emph{motivic} degenerate fibers at the cusp of $\scrM_{1,1}$.
For the sake of affinity with Galois representations, in this paper, we use the category $\ME_p(\scrM_{1,1})$ of the mixed elliptic smooth $\mathbf Q_p$-sheaves, which is a $p$-adic avatar of $\mathsf{UMEM}$, in stead of $\mathsf{UMEM}$.
This category contains $\MT_p(\bfZ)=\MT_p(\bfZ)\otimes\bfQ_p$ as a full-subcategory.
The specialization at the cusp defines the functor $\ME_p(\scrM_{1,1})\to \MT_p(\bfZ)$ of $\bfQ_p$-linear Tannakian categories. Then, $\frakt_*\otimes_{\bfQ}\bfQ_p$ can be embedded into the graded fundamental Lie algebra $\mathfrak g_{\ME_p}$ of $\ME_p(\scrM_{1,1})$.

In Section 3, we impose the \emph{global weight filtration} $W^\bullet\frakt_{*,p}$ on $\frakt_{*,p}:=\frakt_*\otimes_{\mathbf Q}\mathbf Q_p$ by using the embedding to $\mathfrak g_{\ME_p}$ and the natural weight filtration on $\mathfrak g_{\ME_p}$.
Our depth-weight compatibility hypothesis (=Hypothesis \ref{hyp2-2}) is that the
following equation holds for any negative integers $w,n$ such that $2|w$:
\begin{equation}
	\label{eq0}
	W_n\frakt_{w,p}=D^{w/2-n}\frakt_{w,p}.
\end{equation}
The following is our main result.
\begin{theorem}
	\label{thm0-2}Let $f$ be a Hecke eigen cuspform of full-level of weight $k$
	and let $V_{f,p}$ denote the associated $p$-adic Galois representation of weight $k-1$ constructed by Shimura.
	Then, under the depth-weight compatibility hypothesis,
	Conjecture $\ref{con0-1}$ is true for $V_{f,p}$. 
\end{theorem}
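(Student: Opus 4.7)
The plan is to convert both sides of the Bloch--Kato equation into statements about the structure of the fundamental Lie algebra $\mathfrak{g}_{\ME_p}$ of $\ME_p(\scrM_{1,1})$, and then to invoke the depth-weight hypothesis through its mixed Tate subalgebra $\mathfrak{t}_{*,p}$.

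First, I would reduce the analytic side. Because $V_{f,p}$ is irreducible and non-trivial, $H^0(\mathbf{Q}, V_{f,p}^\vee(1)) = 0$. The completed $L$-function $\Lambda(f,s) = (2\pi)^{-s}\Gamma(s)L(f,s)$ is entire, is non-vanishing at the endpoints $s=0,k$ of its critical strip by the Euler product, and satisfies the standard functional equation $\Lambda(f,s) = \pm\Lambda(f,k-s)$. The simple pole of $\Gamma(s)$ at $s=0$ together with $\Lambda(f,0)\neq 0$ then force $\mathrm{rank}_{s=0} L(V_{f,p},s) = 1$, so Conjecture~\ref{con0-1} for $V_{f,p}$ reduces to showing that $\dim H^1_{\mathrm f}(\mathbf{Q}, V_{f,p}^\vee(1)) = 1$.

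Next, I would realize the Selmer group Tannakian-theoretically. Since $V_{f,p}$ is cut from the pushforward of a symmetric power of the relative $p$-adic Tate module of the universal elliptic curve, it lies in the essential image of the realization functor from $\ME_p(\scrM_{1,1})$, and one obtains a comparison
\[
\Ext^1_{\ME_p(\scrM_{1,1})}(\mathbf{Q}_p, V_{f,p}^\vee(1)) \longrightarrow H^1_{\mathrm f}(\mathbf{Q}, V_{f,p}^\vee(1)).
\]
Injectivity is automatic from faithfulness of the realization; surjectivity should be argued in analogy with the $\MT(\mathbf{Z})$ case (Soul\'e, Huber--Kings) by exhibiting every crystalline extension as a subquotient of an object of $\ME_p(\scrM_{1,1})$. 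The left-hand Ext group is then computed by a graded piece of $H^1(\mathfrak{g}_{\ME_p}; V_{f,p}^\vee(1))$, equivalently by the $f$-isotypic component of the cuspidal part of $\mathfrak{g}_{\ME_p}^{\mathrm{ab}}$.

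Finally, I would apply Hypothesis~\ref{hyp2-2}. By the Hain--Matsumoto structure theorem, $\mathfrak{g}_{\ME_p}$ is generated by an Eisenstein part---whose specialization at the cusp is $\mathfrak{t}_{*,p}$---together with cuspidal generators indexed by normalized Hecke eigenforms, the two families being coupled through the Pollack-type quadratic relations. The identity $W_n\mathfrak{t}_{w,p} = D^{w/2-n}\mathfrak{t}_{w,p}$ rigidifies the depth structure on the Eisenstein side, and transporting it through the Pollack relations forces the canonical extension of $\mathbf{Q}_p$ by $V_{f,p}^\vee(1)$ attached to $f$ itself to exhaust the $f$-isotypic cuspidal component. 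Combined with Step~2, this yields the desired equality $\dim H^1_{\mathrm f}(\mathbf{Q}, V_{f,p}^\vee(1)) = 1$. The main obstacle is this last transfer: Hypothesis~\ref{hyp2-2} is a \emph{linear} identity on $\mathfrak{t}_{*,p}$, whereas controlling the cuspidal Ext requires pushing it through the \emph{quadratic} Pollack relations in $\mathfrak{g}_{\ME_p}$. A secondary difficulty is establishing surjectivity of the comparison $\Ext^1_{\ME_p}\to H^1_{\mathrm f}$, for which the $\MT(\mathbf{Z})$ analogue is already a deep theorem and the elliptic version will require a careful geometric construction.
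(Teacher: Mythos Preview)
Your reduction of the analytic side to $\dim\rmH^1_{\mathrm f}(\bfQ,V_{f,p}^\vee(1))=1$ is correct, but Step~2 rests on a category error. The simple objects of $\ME_p(\scrM_{1,1})$ are the sheaves $\calV^n(r)$ on $\scrM_{1,1}$; the Galois representation $V_{f,p}$ is a summand of $\rmH^1_{\et}(\scrM_{1,1}/\overline{\bfQ},\calV^{k-2})$, i.e.\ cohomology \emph{along} $\scrM_{1,1}$, and neither it nor any twist of it is an object of $\ME_p(\scrM_{1,1})$ (the only ``realization'' functor out of $\ME_p(\scrM_{1,1})$ is $\bv^*$, which lands in $\MT_p(\bfZ)$). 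So there is no group $\Ext^1_{\ME_p}(\bfQ_p,V_{f,p}^\vee(1))$ to compare with the Selmer group. What the paper has instead is an injection of a \emph{second} Ext group,
\[
\Ext^2_{\ME_p(\scrM_{1,1})}\bigl(\bfQ_p,\calV^{2m}(2m+2)\bigr)\hookrightarrow \rmH^1_{\mathrm f}\bigl(\bfQ,\rmH^1_{\mathrm{cusp}}(\scrM_{1,1}/\overline{\bfQ},\calV^{2m}(2m+2))\bigr),
\]
and this only produces classes in $\rmH^1_{\mathrm f}$, i.e.\ a \emph{lower} bound.

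The deeper gap is that your outline contains no mechanism for bounding the Selmer group from above; the ``surjectivity of $\Ext^1_{\ME_p}\to\rmH^1_{\mathrm f}$'' you flag as a secondary difficulty is in fact the entire content of the theorem, and Hypothesis~\ref{hyp2-2} by itself cannot deliver it. In the paper the hypothesis is used, via the isomorphism $\mathfrak d^{\mathrm{odd}}\otimes\bfQ_p\cong\fraku_+^{\mathrm g}$ and Brown's period-polynomial relations among the $\overline{\boldsymbol\sigma}_{2m+1}$, to show that the cup products $\varphi_{2i+2}\cup\varphi_{2j+2}$ span a subspace of dimension at least $\dim_{\bfQ}S_{2m+2}$ inside $\rmH^1_{\mathrm f}$. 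These cup products are then identified (up to the invertible Hecke operator $1-T_p+p^{2m+1}$) with the specializations $z_1(i,j)$ of Kato's Euler system. The upper bound comes from the Euler-system machinery: once some $e_fz_1(i,j)\neq 0$, Kato's finiteness theorem gives $\rmH^2(\bfZ[1/p],V_{f,p}(2m+2))=0$, forcing the $f$-component of $\rmH^1_{\mathrm f}$ to be one-dimensional over the Hecke field; an inductive count against $\dim S_{2m+2}$ then handles every eigenform. Without invoking Euler systems, no amount of Tannakian bookkeeping will close this direction.
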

Our hypothesis is natural one.
Indeed, if an \emph{elliptic analogue} of Brown's theorem \cite{Brown12}
is true, then the depth-weight compatibility hypothesis is also true. See Remark \ref{remhyp}.

Note that the Galois representation $V_{f,p}$
is \emph{non-critical} because the Gamma factor of $L(V_{f,p},s)=L(f,s)$ has
a pole at $s=0$. There exists known results of the Bloch-Kato conjecture for
\emph{critical} twists of $V_{f,p}$.
In the celebrated paper \cite{Kat04}, by using the Euler system method,
Kato proved Conjecture \ref{con0-1}
for the critical twist $V_{f,p}(j)$ of $V_{f,p}$
when the $L$-value $L(f,j)$ does not vanish.
In particular, when $j$ is not the central critical point,
Conjecture \ref{con0-1} for $V_{f,p}(j)$ is true (\cite[Theorem 14.2 (1)]{Kat04}).
The case of weight two, Conjecture \ref{con0-1} for $V_{f,p}(1)$
can be proved by combining results of Gross-Zagier and Kolyvagin (\cite{GZ86}, \cite{Kol90}) for the analytic rank less than or equal to one
modular elliptic curves.
Certain converse theorems of Gross-Zagier-Kolyvagin were proved by Skinner, Zhang, and Burange-Tian (\cite{Skinner20}, \cite{Zhang}, \cite{BT}).
For the higher weight case, Bella\"{i}che-Chenevier gave lower bounds
for the Bloch-Kato Selmer group when the sign of $f$ is $-1$ under technical conditions containing an Arthur's conjecture (\cite[Theorem 9.1.2]{BC06}).

Though our proof is on the same line as that of \cite{Kat04},
which will be carried out by proving non-vanishing of specializations of Euler systems,
the method for proving non-vanishing is completely different.
Kato proved such a non-triviality by relating Euler systems and (critical) $L$-values
by his explicit reciprocity law.
In this paper, we will prove a non-vanishing result
by relating Euler systems and relations of the depth-graded motivic Lie algebra $\mathfrak d$
\emph{under} the depth-weight compatibility hypothesis.

The plan of this paper is as follows.
In Section \ref{cryscompl}, we review a theory of
mixed elliptic smooth $\mathbf Q_p$-sheaves briefly.
In Section \ref{depth}, we recall the depth-graded motivic Lie
algebra
and prove a certain ``Eisenstein congruence relation'' under the hypothesis.
Then, in Section \ref{injectivity}, we prove the non-vanishing of
cup products of Eisenstein classes under the hypothesis.
In the final section, we give a proof of Theorem \ref{thm0-2} by relating those cup products and
specializations of the Kato Euler system.
\subsection*{Acknowledgements}
This work was supported by JSPS KAKENHI Grant Number JP23K03069.
\subsection*{Notation}
Throughout the paper, we fix a prime number $p$.
For each field $k$, we fix its separable closure $\bar k$.
The absolute Galois group $\mathrm{Gal}(\bar k/k)$
is denoted by $\mathscr G_k$.
We take $\overline{\mathbf Q}$ as a subfield of $\mathbf C$.
The symbol $\Rep_{\bfQ_p}(\scrG_k)$ denotes the category of continuous representations of $\scrG_k$ on finite dimensional $\bfQ_p$-vector spaces.
For a scheme $S$, $\mathsf{Sm}_p(S)$ denotes the category of
smooth $\mathbf Q_p$-sheaves on $S_{\et}$.
In this paper, $\scrM_{1,1}$ denotes the moduli stack of elliptic curves
over $\bfQ$-schemes and let $\pi\colon \mathscr E\to \scrM_{1,1}$ be the universal elliptic curve.

Let $\frak g$ be a Lie algebra over a field $k$ and let $V$ be a $k$-vector space
equipped with the Lie homomorphisms $F\colon \frak g\to \End_k(V)$.
Then, for each $x\in \frak g$, the symbol $W^{x=0}$ denotes the kernel of $F(x)$.

For a finite dimensional $k$-vector space $V$, the symbol $\underline{\Aut}_k(V)$ denotes the algebraic group over $k$ defined by $\underline{\Aut}_k(V)(R)=\Aut_{R}(V\otimes_kR)$ for each $k$-algebra $R$.
\section{Mixed elliptic smooth $\mathbf Q_p$-sheaves over $\scrM_{1,1}$}\label{cryscompl}
Our main theorem will be proved by relating cup products of
Eisenstein classes and relations of the depth-graded
motivic Lie algebra.
To connect those two things, we need a platform,
the fundamental Lie algebra of the category of mixed elliptic smooth $\mathbf Q_p$-sheaves on $\scrM_{1,1}$.
In this section, we summarize basic properties  of $\ME_p(\scrM_{1,1})$, which is proved by studying weighted completions
of the \'etale fundamental group of $\scrM_{1,1}$. For readers' convenience, we give a quick review of this notion in Appendix.
See  \cite{HM03}, \cite{HM04}, and \cite{HM15} for more details.

Let $E_q$ be the Tate elliptic curve defined over $\mathbf Q(\!(q)\!)$ (\cite[(8.8)]{KM84}) and let
\[
v\colon \spec(\overline{\mathbf Q}\{\!\{q\}\!\})\to \spec(\mathbf Q(\!(q)\!))\to \scrM_{1,1} 
\]
be the geometric point over the classifying morphism of the elliptic
curve $E_q\times_{\spec(\mathbf Q(\!(q)\!))}\spec(\overline{\mathbf Q}\{\!\{q\}\!\})$ over the field $\overline{\mathbf Q}\{\!\{q\}\!\}$ of formal Puiseux series over $\overline{\mathbf Q}$.
Let $ \mathsf{Sm}_p(\scrM_{1,1}/\mathbf Q)$ denote the category of smooth $\mathbf Q_p$-sheaves over $\mathscr M_{1,1}$. Then, the pull-back by $v$ and the coefficient action of $\scrG_{\bfQ}$ on $\overline{\mathbf Q}\{\!\{q\}\!\}$ defines a functor
\[
\bv\colon \mathsf{Sm}_p(\scrM_{1,1}/\mathbf Q)\to \mathsf{Sm}_p(\spec(\mathbf Q))={\Rep}_{\mathbf Q_p}(\mathscr G_{\mathbf Q})
\]
of neutral $\mathbf Q_p$-linear Tannakian categories.
\subsection{The definition}
For each $n\in \mathbf Z_{\geq 0}$, we define an object $\calV^n$
of $\mathsf{Sm}_p(\scrM_{1,1}/\mathbf Q)$ by
\[
{\calV}^n=\mathrm{Sym}^n\left(\rmR^1\pi_*\mathbf Q_p\right),
\]
where $\pi\colon \scrE\to \scrM_{1,1}$ is the universal elliptic curve.
\begin{definition}
	\label{dfn1.1}A \emph{mixed elliptic smooth $\mathbf Q_p$-sheaf} on $\scrM_{1,1}$ is a pair $(\mathcal F,W_\bullet\mathcal F)$ where
	\begin{itemize}
		\item $\mathcal F$ is an object of $\mathsf{Sm}_p(\scrM_{1,1})$ such that $\bv^*\mathcal F\in \Rep_{\mathbf Q_p}(\mathscr G_{\mathbf Q})$ is unramified outside $p$
		and crystalline at $p$,
		\item $W_\bullet\mathcal F $ is an increasing,
		saturated, and separated filtration of $\mathcal F$ such that
		\[
		{\Gr}^W_n\mathcal F\cong \bigoplus_{r\in \mathbf Z}{\calV}^{n+2r}(r)^{\oplus n_r}
		\]
		for some non-negative integers $n_r$.
	\end{itemize} 
	The filtration $W_\bullet\mathcal F$ is called the \emph{weight filtration} of $\calF$.
	A morphism $f\colon (\mathcal F,W_\bullet\mathcal F)\to (\mathcal G,W_\bullet\mathcal G)$
	is a morphism $f\colon \mathcal F\to \mathcal G$ in $\mathsf{Sm}_p(\scrM_{1,1}/\mathbf Q)$
	such that $f(W_n\calF)\subset W_n\calG$ for all $n$.
	The category of mixed elliptic smooth $\mathbf Q_p$
	is denoted by $\ME_p(\scrM_{1,1})$.
\end{definition}
According to Subsection \ref{generaltheory}, the category $\ME_p(\scrM_{1,1})$ can be regarded as an abelian full subcategory of $\mathsf{Sm}_p(\scrM_{1,1}/\mathbf Q)$ by the functor
\[
\ME_p(\scrM_{1,1})\to \mathsf{Sm}_p(\scrM_{1,1});\quad (\calF,W_\bullet\calF)\mapsto \calF.
\]
In particular, the weight filtration $W_\bullet \calF$ of $\calF$ is uniquely determined by $\calF$.
Therefore, we write $\calF$ for any object $(\mathcal F, W_\bullet\mathcal F)$ of $ \ME_p(\scrM_{1,1})$ for simplifying notation.

Any $p$-adic mixed Tate module over $\mathbf Z$ (see Appendix \ref{appendix2} for a precise definition)
can be regarded as an object $\mathcal F$ of $\ME_p(\scrM_{1,1})$ such that $\mathcal F|_{\scrM_{1,1}/\overline{\mathbf Q}}$
is constant. Let $\MT_p(\mathbf Z)$ denote the category of
$p$-adic mixed Tate modules over $\mathbf Z$.
By the remark above, $\MT_p(\mathbf Z)$ can be regarded as a full subcategory of $\ME_p(\scrM_{1,1})$.
Moreover, the functor $\bv^*$ defines a section
\begin{equation}
	\label{eq25}
{\ME}_p(\scrM_{1,1})\to {\MT}_p(\mathbf Z)
\end{equation}
of the natural functor ${\MT}_p(\mathbf Z)\to {\ME}_p(\scrM_{1,1})$. The functor (\ref{eq25}) is also denoted by $\bv^*$ by abuse of notation.
It is known that ${\MT}_p(\mathbf Z)$ has a natural $\mathbf Q$-structure.
	Let $\MT(\mathbf Z)$ be the category of mixed Tate motives over $\mathbf Z$.
See \cite[Section 1]{De-G05} for the precise definition of this category.
\begin{proposition}[{\cite[Corollary9.4]{HM04}}]
	\label{prop1-2}
	The $p$-adic \'etale realization functor induces an equivalence
	\[
	\MT(\mathbf Z)\otimes\mathbf Q_p\xrightarrow{\sim}{\MT}_p(\mathbf Z)
	\]
	of $\mathbf Q_p$-linear Tannakian categories.
\end{proposition}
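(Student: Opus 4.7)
The plan is to check the equivalence by reducing to a computation on the generating Tate twists. Both $\MT(\mathbf Z)\otimes \mathbf Q_p$ and $\MT_p(\mathbf Z)$ are neutral $\mathbf Q_p$-linear Tannakian categories whose simple objects are exactly the Tate twists, and the $p$-adic \'etale realization $\rho_p\colon \MT(\mathbf Z)\to \MT_p(\mathbf Z)$ is a $\mathbf Q$-linear exact $\otimes$-functor with $\mathbf Q(n)\mapsto \mathbf Q_p(n)$. Thus $\rho_p$ extends to a $\mathbf Q_p$-linear $\otimes$-functor $\bar\rho_p\colon \MT(\mathbf Z)\otimes \mathbf Q_p\to \MT_p(\mathbf Z)$, and it is enough to verify: (i) $\bar\rho_p$ induces bijections on $\Hom$ and on $\Ext^1$ between any two generators; (ii) $\Ext^i(\mathbf Q(0),\mathbf Q(n))=0$ for $i\geq 2$ on both sides; and (iii) every object of $\MT_p(\mathbf Z)$ lies in the essential image of $\bar\rho_p$.

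The $\Hom$-part of (i) is immediate, since on both sides the only non-zero $\Hom$'s between Tate twists occur when the two twists agree, and each is one-dimensional (the cyclotomic character separates Tate twists on the Galois side). For the $\Ext^1$-part, I will identify
\[
\Ext^1_{\MT(\mathbf Z)}(\mathbf Q(0),\mathbf Q(n))\simeq K_{2n-1}(\mathbf Z)_{\mathbf Q}
\]
via the Beilinson--Deligne description of motivic cohomology together with Borel's rank computation, and
\[
\Ext^1_{\MT_p(\mathbf Z)}(\mathbf Q_p(0),\mathbf Q_p(n))=\mathrm H^1_{\mathrm f}(\mathbf Q,\mathbf Q_p(n))
\]
by the very definition of $\MT_p(\mathbf Z)$ (continuous representations that are unramified outside $p$ and crystalline at $p$). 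The map $\bar\rho_p$ on $\Ext^1$'s will then be the $p$-adic \'etale Chern class / Soul\'e regulator $K_{2n-1}(\mathbf Z)\otimes\mathbf Q_p\to \mathrm H^1_{\mathrm f}(\mathbf Q,\mathbf Q_p(n))$, and the key input is Soul\'e's theorem, combined with the Bloch--Kato characterization of $\mathrm H^1_{\mathrm f}$, asserting that this map is an isomorphism.

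Step (ii) is the Beilinson--Soul\'e vanishing $K_{2n-i}(\mathbf Z)_{\mathbf Q}=0$ for $i\geq 2$ on the motivic side, and the vanishing $\mathrm H^2_{\mathrm f}(\mathbf Q,\mathbf Q_p(n))=0$ on the Galois side (which follows from Poitou--Tate duality and Soul\'e's $\Ext^1$-computation). Given (i) and (ii), step (iii) follows by induction on the length of the weight filtration: any $\calF\in\MT_p(\mathbf Z)$ fits into a short exact sequence $0\to W_{n-1}\calF\to W_n\calF\to \Gr^W_n\calF\to 0$, in which $W_{n-1}\calF$ is inductively in the image of $\bar\rho_p$ and $\Gr^W_n\calF$ is a sum of Tate twists; the $\Ext^1$-bijection of (i) lifts the extension class to the motivic side, and the vanishing of $\Ext^2$ guarantees that this lift is unobstructed.

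The hardest step, and the technical heart of the plan, is the $\Ext^1$-comparison (i). Its verification requires three deep inputs---Borel's rank formula for $K_{2n-1}(\mathbf Z)$, Soul\'e's identification of $p$-adic \'etale cohomology of $\mathbf Z[1/p]$ with twisted $K$-theory, and the Bloch--Kato realization of $\mathrm H^1_{\mathrm f}$ as the crystalline-at-$p$ subspace---and, crucially, the check that these fit into a commutative diagram compatible with $\bar\rho_p$ on Ext's. It is this compatibility of regulators that pins down the Tannakian equivalence.
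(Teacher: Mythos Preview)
The paper does not supply its own proof of this proposition: it is stated with a citation to \cite[Proposition~9.6]{HM03} and used as a black box thereafter. So there is no in-paper argument to compare against; the relevant benchmark is the proof in \cite{HM03}, and your outline is essentially that argument---compare $\Ext$ groups between Tate objects via Soul\'e's theorem, use freeness of the pro-unipotent radical (equivalently, vanishing of $\Ext^2$) on both sides, and conclude by d\'evissage on the weight filtration.

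Two small points of precision. First, what you invoke in step~(ii) on the motivic side is not the Beilinson--Soul\'e vanishing conjecture (which concerns negative-weight motivic cohomology and is the input needed to \emph{define} the $t$-structure giving $\MT(\mathbf Z)$); rather, $\Ext^{\geq 2}_{\MT(\mathbf Z)}(\mathbf Q(0),\mathbf Q(n))=0$ follows from Borel's computation that $K_{2n-i}(\mathbf Z)_{\mathbf Q}=0$ for even $2n-i>0$, which makes the unipotent radical of the Tannakian group free. Second, your step~(iii) as written appeals to the $\Ext^1$-bijection ``of (i)'' to lift an extension of $\Gr^W_n\calF$ by $W_{n-1}\calF$, but (i) only gives the bijection between \emph{simple} objects; you need the intermediate observation that (i)+(ii) imply, by the long exact sequence in $\Ext$, that $\bar\rho_p$ induces isomorphisms on $\Ext^1$ between \emph{arbitrary} objects already known to be in the essential image (equivalently, that $\bar\rho_p$ is fully faithful). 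Once that is said, the inductive lift goes through and the $\Ext^2$ vanishing is not needed as an ``unobstructedness'' condition but rather as the input to the d\'evissage for full faithfulness.
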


\subsection{Weight filtrations of the fundamental Lie algebra}
Let $\ME_p^{\mathrm{ss}}(\scrM_{1,1})$ be the maximal semi-simple
Tannakian full subcategory of $\ME_p(\scrM_{1,1})$.
Since each simple object of $\ME_p(\scrM_{1,1})$ is isomorphic
to some $\calV^n(r)$, $\ME_p^{\mathrm{ss}}(\scrM_{1,1})$ has a tensor generator $\calV^1$. Therefore, the natural morphism
\[
\pi_1({\ME}_p^{\mathrm{ss}}(\scrM_{1,1}),v)\to \underline{\Aut}_{\mathbf Q_p}(\calV^1_v)\cong\mathrm{GL}_{2,\mathbf Q_p}
\]
is injective (Proof of \cite[Proposition 2.20 (b)]{DeM}).
Moreover, any simple objects of $\GL_{2,\bfQ_p}$-module is isomorphic to
$\mathrm{Sym}^n(\mathrm{std})\otimes\det^m$ for some $n,m\in \bfZ,\ n\geq 0$ where $\mathrm{std}$ is the standard representation of $\GL_{2,\bfQ_p}$,
the above homomorphism is also surjective by \cite[Proposition 2.21 (a)]{DeM}.
Therefore, we have a short exact sequence
\begin{equation}
	\label{eq1}
	1\to U\to \pi_1({\ME}_p(\scrM_{1,1}),v)\to \underline{\Aut}_{\mathbf Q_p}(\calV^1_v)\to 1
\end{equation}
of affine group schemes over $\mathbf Q_p$,
where $U$ is the prounipotent radical of
$\pi_1(\ME_p(\scrM_{1,1}),v)$.
Let
\[
\mathrm{pr}\colon \pi_1({\ME}_p(\scrM_{1,1}),v)\twoheadrightarrow \pi_1({\MT}_p(\mathbf Z),\omega_{\mathrm f})
\]
be the homomorphism of affine group schemes over $\bfQ_p$ induced by the natural fully faithful functor
\[
{\MT}_p(\mathbf Z)\hookrightarrow {\ME}_p(\scrM_{1,1}).
\]
Here $\omega_{\mathrm f}\colon \MT_p(\bfZ)\to \Vec_{\bfQ_p}$ is the forgetful functor.
Then, we define $\pi_1^{\mathrm g}({\ME}_p(\scrM_{1,1}),v)$ and $U^{\mathrm g}$ to be
the kernel of $\mathrm{pr}$ and the kernel of the homomorphism
\[
 U\hookrightarrow \pi_1({\ME}_p(\scrM_{1,1}),v) \ontomap \pi_1({\MT}_p(\mathbf Z),\omega_{\mathrm f}),
\]
respectively. We call $\pi_1^{\mathrm g}(\ME_p(\scrM_{1,1}),v)$ and $U^{\mathrm g}$ the geometric parts of $ \pi_1({\ME}_p(\scrM_{1,1}),v)$
and $U$, respectively.
Let $\mathfrak u$ and $\mathfrak u^{\mathrm g}$ denote the Lie algebras of $U$ and $U^{\mathrm g}$,
respectively.
For each $\calF\in \Obj(\ME_p(\scrM_{1,1}))$, its fiber $F:=\calF_v$ at $v$ has natural \emph{two} filtrations as follows:
\[
W_nF:=(W_n(\calF))_v,\quad W^\infty_n F:=W_n\bv^*\mathcal F.
\] 
Here, for $M_p\in\Obj(\MT_p(\bfZ))$, $W_* M_p$ denotes a unique $\scrG_{\bfQ}$-stable filtration such that $\Gr^W_n(M_p)$ is isomorphic to a direct sum of $\bfQ_p(-n/2)$ when $n$ is even, and is zero when $n$ is odd.
Then, we define two weight filtrations on
$\mathfrak u$ as follows.
\begin{definition}
	\label{dfn1-3}Let $\bv^*\colon \ME_p(\scrM_{1,1})\to \MT_p(\mathbf Z)$ be the same as before.
	\begin{itemize}
		\item[(1)]We define the \emph{global weight filtration} $W_\bullet\mathfrak u$
		by
		\[
		W_n\mathfrak u=\left\{u\in \mathfrak u\ \middle |\ u(W_i\mathcal F_v)\subset W_{n+i}\mathcal F_v ,\ \forall \mathcal F\in\Obj( {\ME}_p(\scrM_{1,1})),\ \forall i\in \mathbf Z\right\}.
		\]
		\item[(2)]We define the \emph{local weight filtration} $W^\infty_\bullet\mathfrak u$
		by
		\[
		W^\infty_n\mathfrak u=\left\{u\in \mathfrak u\ \middle |\ uW_i^\infty\mathcal F_v\subset W_{n+i}^\infty\mathcal F_v ,\ \forall \mathcal F\in \Obj({\ME}_p(\scrM_{1,1})),\ \forall i\in \mathbf Z\right\}.
		\]
	\end{itemize}
\end{definition}
Note that the local filtration is indexed by even integers.

Since the action of $\frak u$ on $\calF_v$ preserves $W_n(\calF_v)$ and is trivial on $\Gr^W_n(\calF_v)$ for any $\calF\in \Obj(\ME_p(\scrM_{1,1}))$ and any $n\in\bfZ$, $\mathfrak u$ has negative global weights. Namely, we have
\[
W_{-1}\mathfrak u=\mathfrak u.
\]
Moreover, this also has negative \emph{local} weights:
\[
W_{-2}^\infty\mathfrak u=\mathfrak u.
\]
Indeed, if we regard $\fraku$ as a $\scrG_{\bfQ}$-module by $\scrG_{\bfQ}\to \pi_1(\ME_p(\scrM_{1,1}),v)$ and by the adjoint action of $\pi_1(\ME_p(\scrM_{1,1}),v)$ on $\fraku$, $W_{-2}^\infty\fraku$ is a $\scrG_{\bfQ}$-stable Lie subalgebra of $\fraku$. Then, as the abelianization functor is right exact, we
have an exact sequence
\[
\rmH_1(W_{-2}^\infty\fraku)\to \rmH_1(\fraku)\to \rmH_1(\fraku/W_{-2}^\infty\fraku)\to 0
\]
of $\scrG_{\bfQ}$-modules. By the direct computation of $\rmH_1(\fraku)$ (\cite[Proposition 13.1]{HM15}, cf.\ Proposition \ref{propA0} (1)), the module $ \rmH_1(\fraku)$ is of negative weight.
Hence, we have that $\rmH_1(\fraku/W_{-2}^\infty\fraku)=0$ and this implies $\fraku=W_{-2}^\infty\fraku$.

Following an idea in \cite{HM15}, we fix a splitting of the two weight filtrations as follows.
According to Proposition \ref{propa3}, ${\Gr}^{W^\infty}_*{\Gr}^W_\bullet\circ v$ defines a fiber functor
on $\ME_p(\scrM_{1,1})$. Moreover, by Proposition \ref{propa4}, we have an isomorphism
\begin{equation}
	\label{fixedisom}
	v\cong {\Gr}^{W^\infty}_*{\Gr}^W_\bullet\circ v
\end{equation}
of fiber functors on $\ME_p(\scrM_{1,1})$.
Through out this paper, we fix an isomorphism above.
Then, for any $\calF\in \ME_p(\scrM_{1,1})$ with the fiber $F=\calF_v$, we equip $F$ with the canonical bigrading
\[
F=\bigoplus_{s,t\in\bfZ}F_{2s,t}
\]
such that
\[
W_n F=\bigoplus_{s,t\in \mathbf Z,\ t\leq n}F_{2s,t},\quad W^\infty_{2m} F=\bigoplus_{s,t\in \mathbf Z,\ s\leq m}F_{2s,t}.
\]
Put \[
V:=\calV_v^1=\bfQ_p\oplus \bfQ_p(-1).
\]
The fixed isomorphism (\ref{fixedisom}) induces a section
\begin{equation}
	\label{eq5}
	\mathrm{spl}\colon \underline{\Aut}_{\bfQ_p}(V)=\GL_{2,\bfQ_p}\hookrightarrow \pi_1({\ME}_p(\scrM_{1,1}),v)
\end{equation}
of $\pi_1({\ME}_p(\scrM_{1,1}),v)\to \GL_{2,\bfQ_p}$.
By using this splitting, we have the following equation:
\begin{equation}
	\label{eq6}
	F_{2m,n}=\left\{u\in F\ \middle |\ \rho_F\left(\mathrm{spl}\begin{pmatrix}
		t_1&0\\
		0&t_2
	\end{pmatrix}\right)u=t_1^{n-m}t_2^{m} u,\ \forall t_1,t_2\in \bfQ_p^\times\right\}.
\end{equation}
Here, $\rho_F\colon \pi_1(\ME_p(\scrM_{1,1}),v)\to\underline{\Aut}_{\bfQ_p}(F)$ is the canonical representation.
Note that our index is slightly different from that of \cite[Proof of Proposition B1]{HM15}
because they used the algebraic group $\underline{\Aut}_{\bfQ_p}(V^\vee)$ instead of $\underline{\Aut}_{\bfQ_p}(V)$.
\subsection{Generators of $\mathfrak u$}\label{gen}
For any positive integer $n$ greater than one, we put
\begin{equation}
	\label{eq7}
	\mathrm{E}_{2n}:=\mathrm H^1_{\et}\left(\scrM_{1,1}/\mathbf Q,\calV^{2n}(2n+1)\right),
\end{equation}
which is a one dimensional $\mathbf Q_p$-vector space.
By the fixed embedding $\overline{\mathbf Q}\subset\mathbf C$,
this space has a natural $\mathbf Q$-structure $\mathrm{E}_{2n,\mathbf Q}$
coming from the Betti-cohomology group of $\scrM_{1,1}(\mathbf C)$.
We denote by $\varphi_{2n}$ the $\mathbf Q$-basis
of $\mathrm{E}_{2n,\mathbf Q}$ which is sent by Betti-de Rham comparison map
to the class $\psi_{2n}$ associated with the normalized Eisenstein series of weight $2n+2$ (\cite[Subsection 9.1]{HM15}).
\begin{proposition}[{\cite[Proposition 13.1]{HM15}}]
	\label{prop1-5}There exists a natural isomorphism
	\[
	\mathrm H_1(\mathfrak u)\cong \prod_{m\geq 1}\mathrm H^1(\mathbf Q,\mathbf Q_p(2m+1))^\vee(2m+1)\times\prod_{n\geq 1}\mathrm{E}_{2n}^\vee\otimes_{\mathbf Q_p}\mathrm{Sym}^{2n}(V)(2n+1)
	\]
	of $\GL_{2,\bfQ_p}$-modules.
	Here, the superscript $\vee$ refers to the operation of taking the $\mathbf Q_p$-dual.
\end{proposition}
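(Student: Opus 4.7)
The plan is to use Tannakian duality to convert the computation of $\mathrm H_1(\fraku)$ into a determination of $\mathrm{Ext}^1$-groups in $\ME_p(\scrM_{1,1})$. For a neutral Tannakian category $\calC$ with fundamental group an extension of a reductive $R$ by a pro-unipotent $U$ with $\mathrm{Lie}(U)=\fraku$, the Hochschild-Serre spectral sequence together with the reductivity of $R$ yields $\mathrm{Ext}^1_\calC(\mathbf 1, S)\cong \Hom_R(\mathrm H_1(\fraku), S)$ for every simple $S$, whence
\[
\mathrm H_1(\fraku)\cong \prod_{S}\mathrm{Ext}^1_{\calC}(\mathbf 1, S)^\vee\otimes S
\]
as $R$-modules. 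Applied to $\ME_p(\scrM_{1,1})$, whose simple objects are precisely the $\calV^k(r)$ with $k\geq 0$ and $r\in\bfZ$, this reduces the proposition to computing $\mathrm{Ext}^1_{\ME_p(\scrM_{1,1})}(\bfQ_p, \calV^k(r))$ for each pair $(k,r)$.

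First I would handle the Tate case $k=0$: full faithfulness of $\MT_p(\bfZ)\hookrightarrow \ME_p(\scrM_{1,1})$ supplies an injection from $\mathrm{Ext}^1_{\MT_p(\bfZ)}(\bfQ_p,\bfQ_p(r))$, and for surjectivity I would use the Leray spectral sequence for $\scrM_{1,1}/\bfQ\to\spec\bfQ$ combined with $\mathrm H^1_{\et}(\scrM_{1,1}/\overline{\bfQ},\bfQ_p)=0$ (itself coming from $\mathrm H^1(\mathrm{SL}_2(\bfZ),\bfQ_p)=0$) to identify $\mathrm H^1_{\et}(\scrM_{1,1}/\bfQ,\bfQ_p(r))\cong \mathrm H^1(\bfQ,\bfQ_p(r))$. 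Imposing the local conditions of Definition \ref{dfn1.1} cuts this down to $\mathrm H^1_{\mathrm f}(\bfQ,\bfQ_p(r))$, which by Proposition \ref{prop1-2} and Soul\'e's theorem is one-dimensional for $r=2m+1$ with $m\geq 1$ and vanishes otherwise, giving the first factor.

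Next I would treat the elliptic case $k\geq 1$: the same Leray argument, together with the vanishing $\mathrm H^0_{\et}(\scrM_{1,1}/\overline{\bfQ},\calV^k(r))=0$, gives
\[
\mathrm H^1_{\et}(\scrM_{1,1}/\bfQ,\calV^k(r))\cong \mathrm H^0(\bfQ,\mathrm H^1_{\et}(\scrM_{1,1}/\overline{\bfQ},\calV^k(r))).
\]
For $k$ odd this vanishes by the parity of the action of $-I\in\mathrm{SL}_2(\bfZ)$ on $\calV^k$. For $k=2n$ even, Eichler-Shimura decomposes the cohomology into cuspidal and Eisenstein parts; the cuspidal part is made of two-dimensional irreducible Galois representations and cannot appear as a Jordan-H\"older constituent in $\ME_p(\scrM_{1,1})$ given the shape of $\Gr^W$ prescribed in Definition \ref{dfn1.1}, while the Eisenstein part at level one is one-dimensional and isomorphic as a Galois module to $\bfQ_p(-2n-1)$. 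Galois invariance therefore forces the twist $r=2n+1$ and yields $\mathrm E_{2n}$; one then checks that the Eisenstein class satisfies the crystalline and unramified-outside-$p$ conditions. Assembling the two cases and recording the $\GL(V)$-action via the splitting (\ref{eq5}) produces the stated product decomposition.

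The principal obstacle is the elliptic case: one must argue carefully that extensions in $\ME_p(\scrM_{1,1})$ detect precisely the Eisenstein part of $\mathrm H^1_{\et}(\scrM_{1,1}/\bfQ,\calV^{2n}(2n+1))$, which comes down to excluding cuspidal cohomology via the semisimplicity constraint on weight graded pieces in Definition \ref{dfn1.1}. This exclusion is the structural feature of $\ME_p(\scrM_{1,1})$ that renders $\mathrm H_1(\fraku)$ finite-dimensional in each weight, and it is the key conceptual point on which the proof hinges.
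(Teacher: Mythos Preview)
The paper does not supply its own proof; it cites \cite[Proposition~13.1]{HM15} directly, and the appendix (Proposition~\ref{propA0}) only records the comparison between $\mathrm H^i(\pi^{\crys}(\rho,w),-)$ and continuous cohomology without carrying out the explicit computation of the $\mathrm{Ext}^1$-groups. Your strategy---express $\mathrm H_1(\fraku)$ as $\prod_S \mathrm{Ext}^1_{\ME_p}(\bfQ_p,S)^\vee\otimes S$ via Hochschild--Serre and reductivity of $\GL_2$, then compute each $\mathrm{Ext}^1$ through Leray for $\scrM_{1,1}\to\spec(\bfQ)$---is the standard route and is essentially what \cite{HM15} does.

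There is, however, one muddled step. In the elliptic case you are computing
$\mathrm H^0\bigl(\bfQ,\mathrm H^1_{\et}(\scrM_{1,1}/\overline{\bfQ},\calV^{2n}(r))\bigr)$,
i.e.\ Galois invariants of a fixed $\scrG_{\bfQ}$-representation, and you must show that the cuspidal summand contributes nothing. Your stated reason---that the cuspidal Galois representations ``cannot appear as a Jordan--H\"older constituent in $\ME_p(\scrM_{1,1})$ given the shape of $\Gr^W$''---is not the operative mechanism: objects of $\ME_p(\scrM_{1,1})$ are smooth sheaves on $\scrM_{1,1}$, not $\scrG_{\bfQ}$-modules, and the constraint on $\Gr^W$ in Definition~\ref{dfn1.1} places no restriction on which classes in $\mathrm H^0(\bfQ,-)$ can arise. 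The correct argument is the one you already half-state: each cuspidal piece $V_{f,p}(r)$ is a two-dimensional \emph{irreducible} nontrivial $\scrG_{\bfQ}$-representation, hence $(V_{f,p}(r))^{\scrG_{\bfQ}}=0$ for every $r$. Replace the Jordan--H\"older sentence by this and your outline is sound; the remaining substantive point is then exactly your last check, that the Eisenstein extension is crystalline at $p$ and unramified elsewhere (equivalently, that the comparison map in Proposition~\ref{propA0}(2) is surjective and not merely injective).
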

We define the element $\boldsymbol e_0$ of $\mathrm{Lie}(\GL_{2,\bfQ_p})$ by
\[
\boldsymbol e_0=\begin{pmatrix}
	0&1\\
	0&0
\end{pmatrix},
\]
where we fix a basis of $V=\bfQ_p\oplus \bfQ_p(-1)$ such that $e_0$ decreases the local weight by two.
Then, by the induced splitting \[
\mathrm{spl}'\colon \Lie(\GL_{2,\bfQ_p})\to \Lie(\pi_1(\ME_p(\scrM_{1,1}),v))
\]
by the fixed splitting $\mathrm{spl}$, $\boldsymbol e_0$ acts on $\mathfrak u$ by
\[
\mathrm{ad}(\be_0)(u):=[\mathrm{spl}'(\be_0),u]\quad \forall u\in\fraku.
\]
For each non-negative integer $i$, $\mathrm{ad}^i(\be_0)\in \End_{\bfQ_p}(\frak u)$ is defined to be the $i$th iteration of the endomorphism $\mathrm{ad}(\be_0)$ of $\fraku$.
Similar to \cite[Section 20]{HM15}, we choose lifts $\boldsymbol z_{2m+1}$
and $\boldsymbol e_{2n}$ of
the basis
$\mathrm H^1(\mathbf Q,\mathbf Q_p(2m+1))^\vee(2m+1)$ and $\mathrm{E}_{2n}^\vee(1)\subset \mathrm{E}_{2n}^\vee\otimes_{\mathbf Q_p}\mathrm{Sym}^{2n}(V)(2n+1)$
in
$\fraku_{-2(2m+1),-2(2m+1)}$ and in $\fraku_{-2,-2n-2}$,
respectively.
\begin{proposition}[{\cite[Section 20]{HM15}}]
	\label{prop1-6}
	\begin{itemize}
		\item[(1)]The set of elements
		\[
		\{\boldsymbol z_{2m+1},\quad \boldsymbol e_0^{i}\boldsymbol e_{2n+2}:=\mathrm{ad}^i(\boldsymbol e_0)\boldsymbol e_{2n+2}\ |\ m,n\geq 1,\ 0\leq i\leq 2n\}
		\]
		generates $\fraku$ topologically.
		\item[(2)]The space $\fraku_{-2,-2n-2}$ is one dimensional.
	\end{itemize}
\end{proposition}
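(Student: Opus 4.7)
The plan is to derive both parts from the explicit description of the abelianization $\mathrm H_1(\fraku)$ provided by Proposition \ref{prop1-5}, combined with the bigraded structure on $\fraku$ induced by the splitting $\mathrm{spl}$ of (\ref{eq5}).

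For part (1), since $U$ is pro-unipotent, its Lie algebra $\fraku$ is pro-nilpotent with respect to its lower central series, and this filtration is separated (each graded piece is finite dimensional by the weight bigrading). By a standard Nakayama-type argument for pro-nilpotent Lie algebras, any family of elements whose images topologically span the abelianization $\mathrm H_1(\fraku)=\fraku/\overline{[\fraku,\fraku]}$ will topologically generate $\fraku$. So the task reduces to checking that the proposed elements span $\mathrm H_1(\fraku)$. The $\boldsymbol z_{2m+1}$ are by construction lifts of a $\bfQ_p$-basis of the first factor $\prod_{m\geq 1}\mathrm H^1(\bfQ,\bfQ_p(2m+1))^\vee(2m+1)$. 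For the remaining factor, each summand $\mathrm E_{2n}^\vee\otimes \Sym^{2n}(V)(2n+1)$ is an irreducible $(2n+1)$-dimensional representation of $\mathrm{Lie}(\GL_{2,\bfQ_p})$, and $\boldsymbol e_{2n}$ is a lift of a highest weight vector, namely a generator of the line $\mathrm E_{2n}^\vee(1)$. Since $\boldsymbol e_0$ has bigrading $(-2,0)$ and acts as a lowering operator on the standard $\GL_2$-representation, the iterated brackets $\mathrm{ad}(\boldsymbol e_0)^i\boldsymbol e_{2n}$ for $0\leq i\leq 2n$ span the full $\Sym^{2n}(V)$-component by standard $\mathfrak{sl}_2$-representation theory.

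For part (2), the key input is the negativity $\fraku=W^\infty_{-2}\fraku$ proved in the paragraph preceding the fixed isomorphism (\ref{fixedisom}): every homogeneous piece $\fraku_{2a,b}$ therefore satisfies $a\leq -1$. Consequently any Lie bracket $[\fraku_{2a,b},\fraku_{2c,d}]$ lies in local weight $2(a+c)\leq -4$, so no bracket contributes in local weight $-2$. Hence the projection to the abelianization induces an injection
\[
\fraku_{-2,-2n-2}\hookrightarrow \mathrm H_1(\fraku)_{-2,-2n-2}.
\]
Now one reads off the right-hand side from Proposition \ref{prop1-5}: the Tate summand $\mathrm H^1(\bfQ,\bfQ_p(2m+1))^\vee(2m+1)$ sits in bidegree $(-2(2m+1),-2(2m+1))$, which for $m\geq 1$ lies in local weight $\leq -6$ and does not contribute; within $\mathrm E_{2n'}^\vee\otimes \Sym^{2n'}(V)(2n'+1)$ every element has global weight $-2n'-2$, so only $n'=n$ is relevant, and inside that factor the local weight $-2$ subspace is precisely the highest weight line $\mathrm E_{2n}^\vee(1)$, which is one-dimensional and spanned by the image of $\boldsymbol e_{2n}$. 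This yields $\dim_{\bfQ_p}\fraku_{-2,-2n-2}=1$.

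The only obstacle, such as it is, is keeping the two weight conventions straight and correctly locating the bigrading of each building block in Proposition \ref{prop1-5}; once that bookkeeping is in place, both parts follow formally. The substantive content has been imported from \cite{HM15} in the shape of Proposition \ref{prop1-5}.
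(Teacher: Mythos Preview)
Your proof is correct. The paper does not give its own proof of this proposition: it is stated as a citation of \cite[Section 20.1, Remark 20.3]{HM15}, so there is nothing to compare against directly. Your argument---reducing part (1) to spanning $\mathrm H_1(\fraku)$ via the standard pro-nilpotent Nakayama lemma and reading off the bigraded pieces from Proposition \ref{prop1-5}, and deducing part (2) from the local-weight negativity $\fraku=W^\infty_{-2}\fraku$ to get $\fraku_{-2,\bullet}\hookrightarrow\mathrm H_1(\fraku)_{-2,\bullet}$---is exactly the natural route and is in the spirit of how \cite{HM15} treats these generators. One cosmetic remark: calling $\boldsymbol e_{2n}$ a ``highest weight vector'' and $\boldsymbol e_0$ a ``lowering operator'' is correct with respect to the \emph{local} weight (since $\boldsymbol e_0$ has bidegree $(-2,0)$ and $\boldsymbol e_{2n}$ sits at the top local weight $-2$ of its $\Sym^{2n}(V)(2n+1)$-component), but it may help the reader if you say this explicitly, as the usual $\mathfrak{sl}_2$ convention would label $\boldsymbol e_0=\left(\begin{smallmatrix}0&1\\0&0\end{smallmatrix}\right)$ a raising operator.
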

By Proposition \ref{prop1-6} (2) and Proposition \ref{prop1-5}, the composition of homomorphisms
\[
\fraku_{-2,-2n-2}\intomap \fraku\ontomap \fraku^{\mathrm{ab}}=\rmH_1(\fraku)
\]
is injective. Therefore, it also follows from Proposition \ref{prop1-5} again that
$\boldsymbol e_{2n+2}$
is contained in the geometric part $\fraku^{\mathrm g}$ of $\fraku$, hence so is $\boldsymbol e_0^{i}\boldsymbol e_{2n+2}$.
We normalizes $\boldsymbol e_{2n+2}$ to be the dual to $\varphi_{2n+2}$.

For each graded piece $\fraku_{2r,m}$ of $\fraku$,
define $\fraku_{2r,m}^{e_0=0}$ by
\[
\fraku_{2r,m}^{e_0=0}=\left\{x\in \fraku_{2r,m}\ \middle|\ [\mathrm{spl}'(e_0),x]=0\right\}.
\]
Generally, for each representation \[
\rho_W\colon \Lie(\pi_1(\ME_p(\scrM_{1,1}),v))\to\End_{\bfQ_p}(W),
\]
of $\Lie(\pi_1(\ME_p(\scrM_{1,1}),v))$ on a $\bfQ_p$-vector space $W$, the symbol $W^{\be_0=0}$ denotes the kernel of the endomorphism $\rho_W(\mathrm{spl}'(\be_0))$ of $W$.
\begin{lemma}
	\label{lem1-7}The space $\fraku_{-2(2n+1),-2n-2}^{\boldsymbol e_0=0}$ 
	is one dimensional.
	In particular, we have\[
	{\fraku}_{-2(2n+1),-2n-2}^{\boldsymbol e_0=0}=
	\mathbf Q_p\boldsymbol e_0^{2n}\boldsymbol e_{2n+2}.
	\]	
\end{lemma}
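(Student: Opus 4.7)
The plan is to show that the ambient space $\fraku_{-2(2n+1),-2n-2}$ is itself one-dimensional (stronger than claimed) and spanned by $\boldsymbol e_0^{2n}\boldsymbol e_{2n+2}$, and then to deduce $\boldsymbol e_0$-invariance from vanishing of the neighbouring bidegree $\fraku_{-2(2n+2),-2n-2}$. The key tool is a weight-discrepancy invariant on the topological generating set of Proposition \ref{prop1-6}(1).

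First I introduce, for a bi-homogeneous $x\in\fraku$ with local weight $w_\ell(x)$ and global weight $w_g(x)$, the quantity $\delta(x):=|w_\ell(x)|-2|w_g(x)|$. Since $\fraku=W_{-1}\fraku=W^\infty_{-2}\fraku$ has strictly negative weights in both filtrations (recorded after Definition \ref{dfn1-3}), $\delta$ is additive under Lie brackets: $\delta([x,y])=\delta(x)+\delta(y)$. A direct computation using the bidegrees preceding Proposition \ref{prop1-6} gives $\delta(\boldsymbol z_{2m+1})=-2(2m+1)\leq -6$ and $\delta(\boldsymbol e_0^i\boldsymbol e_{2k+2})=2i-4k-2\leq -2$, with equality in the second inequality iff $i=2k$. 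Hence every topological generator has $\delta\leq -2$, with equality precisely for the bottom-of-ladder vectors $\boldsymbol e_0^{2k}\boldsymbol e_{2k+2}$, and any iterated bracket of $r\geq 1$ generators therefore has $\delta\leq -2r$.

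For the target bidegree I compute $\delta=2(2n+1)-4(n+1)=-2$, so only single generators can contribute; the unique generator with $\delta=-2$ and global weight $-(2n+2)$ is $\boldsymbol e_0^{2n}\boldsymbol e_{2n+2}$. Hence $\fraku_{-2(2n+1),-2n-2}\subset\bfQ_p\boldsymbol e_0^{2n}\boldsymbol e_{2n+2}$, and non-vanishing of the right-hand side is supplied by Proposition \ref{prop1-5}, where $\boldsymbol e_0^{2n}\boldsymbol e_{2n+2}$ appears as a basis element of $\rmH_1(\fraku)$. Applying the same discrepancy bound to $\fraku_{-2(2n+2),-2n-2}$, where $\delta=0$, forces that space to vanish, and therefore $\boldsymbol e_0\cdot\boldsymbol e_0^{2n}\boldsymbol e_{2n+2}=\boldsymbol e_0^{2n+1}\boldsymbol e_{2n+2}=0$, completing the proof.

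The only conceptual obstacle is isolating the invariant $\delta$; once it is on the table, everything else is elementary bookkeeping. The routine checks one must not skip are strict negativity of both weight filtrations on $\fraku$ (recorded in the text) and finite-dimensionality of each bidegree (ensured by Propositions \ref{prop1-5} and \ref{prop1-6}(2)), which together legitimize reducing the analysis to finite linear combinations of iterated brackets of the generators.
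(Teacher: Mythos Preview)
Your proof is correct and takes a genuinely different, more elementary route than the paper's. The paper argues via $\GL_2$-representation theory: the $\boldsymbol e_0$-kernel in the given bidegree must lie in the $\Sym^{2n}(V)(2n+1)$-isotypic component of $\fraku$, and the Clebsch--Gordan rule forbids this component from appearing in $\Gr^a_\Gamma\fraku$ for $a\geq 2$, whence it injects into $\rmH_1(\fraku)$. Your additive invariant $\delta$ extracts the numerical core of that argument without any representation theory---indeed, the paper's observation that every irreducible constituent of a tensor product of factors $\Sym^{n_i}(V)(n_i+1+r_i)$ has the form $\Sym^u(V)(u+1+w)$ with $w\geq 1$ is exactly the statement $\delta\leq -4$ on such products. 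Your approach moreover yields the sharper intermediate facts that the whole bidegree $\fraku_{-2(2n+1),-2n-2}$ (not just its $\boldsymbol e_0$-invariants) is one-dimensional and that the adjacent bidegree $\fraku_{-2(2n+2),-2n-2}$ with $\delta=0$ vanishes outright. One minor correction: Proposition~\ref{prop1-6}(2) treats only $\fraku_{-2,-2n-2}$ and does not supply finite-dimensionality of general bidegrees; the correct justification is that your own $\delta$-bound, combined with the description of $\rmH_1(\fraku)$ in Proposition~\ref{prop1-5}, already shows $(\Gr^a_\Gamma\fraku)_{2m,n}$ is finite-dimensional and vanishes for $a>|\delta(2m,n)|/2$, which is precisely what legitimizes reducing to finitely many iterated brackets.
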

\begin{proof}
	First, we note that there exists the following natural isomorphism:
	\[
	\mathrm{Sym}^{m}(V)^{\be_0=0}\cong\left(\bfQ(0)\oplus\bfQ(-1)\oplus\cdots\oplus \bfQ(-m)\right)^{\be_0=0}\cong \bfQ(0).
	\]
	Hence, we have that
	\[
	\left(\mathrm{Sym}^{2u}(V)(v)\right)^{\boldsymbol e_0=0}=\left(\mathrm{Sym}^{2u}(V)(v)\right)_{-2v,2u-2v}
	\]
 for any integers $u,v$ such that $u\geq 0$.
	Therefore, ${\fraku}_{-2(2n+1),-2n-2}^{\boldsymbol e_0=0}$ is contained in
	the $\mathrm{Sym}^{2n}(V)(2n+1)$-component of the $\GL_{2,\bfQ_p}$-module $\fraku$.
	
	Recall that there exists an isomorphism
	\[
	\mathrm{Sym}^k(V)\otimes_{\mathbf Q_p}\mathrm{Sym}^l(V)\cong \mathrm{Sym}^{k+l}(V)\bigoplus \mathrm{Sym}^{k+l-2}(V)(-1)\bigoplus\cdots \bigoplus \mathrm{Sym}^{k-l}(V)(-l)
	\]
	of $\GL_{2,\bfQ_p}$-modules for any $k\geq l\geq 0$. Therefore, any irreducible component appearing in
	$
	\mathrm{Sym}^k(V)(k+1+r)\otimes_{\mathbf Q_p}\mathrm{Sym}^l(V)(l+1+s)$ with $r,s\geq 0$ is of the form
	\[
	\Sym^u(V)(u+1+w),\quad w\geq 1.
	\]
	Therefore, for each non-negative integers $n_i,r_i$, there exists no $\mathrm{Sym}^{2n}(V)(2n+1)$-component
	in the $\GL_{2,\bfQ_p}$-module
	\[
	\mathrm{Sym}^{n_1}(V)(n_1+1+r_1)\bigotimes\cdots \bigotimes \mathrm{Sym}^{n_a}(V)(n_a+1+r_a)
	\]
	when $a$ is greater than one. Let $\Gamma^i\fraku$ be the central descending filtration of $\fraku$ defined by
	$\Gamma^1\fraku=\fraku,\ \Gamma^i\fraku=[\Gamma^{i-1}\fraku,\fraku],\ i\geq 2$. 
	Since the natural surjective homomorphism
	\[
	\mathrm H_1(\fraku)^{\otimes a}\twoheadrightarrow {\Gr}_\Gamma^a\fraku;\quad \overline{x_1}\otimes\cdots\otimes\overline{x_a}\mapsto \overline{[x_1,[x_2,[\cdots[x_{a-1},x_a]\cdots]}
	\]
	is $\GL_{2,\bfQ_p}$-equivariant, there exists no $\mathrm{Sym}^{2n}(V)(2n+1)$-component in ${\Gr}_\Gamma^a\fraku:=\Gamma^a\fraku/\Gamma^{1+1}\fraku$ if $a$ is greater than
	one.
	This implies that
	\[
	{\fraku}_{-2(2n+1),-2n-2}^{\boldsymbol e_0=0}\bigcap \Gamma^2\fraku=\{0\}.
	\]
	Therefore, we have an isomorphism
	\[
	{\fraku}_{-2(2n+1),-2n-2}^{\boldsymbol e_0=0}\xrightarrow{\sim}\mathrm H_1(\fraku)_{-2(2n+1),-2n-2}^{\boldsymbol e_0=0}\cong\left(\mathrm{Sym}^{2n}(V)(2n+1)\right)^{\boldsymbol e_0=0}
	\]
	by Proposition \ref{prop1-5} and this finishes the proof of the lemma.
\end{proof}
Let us define the bigraded Lie algebra $\fraku_{*,\bullet}^{\mathrm{g}}$
by
\begin{equation}
	\label{eq8}
	{\fraku}_{*,\bullet}^{\mathrm{g}}={\Gr}^{W^\infty}_*{\Gr}^W_\bullet\fraku\cong\bigoplus_{m,n<0}{\fraku}_{2m,n}^{\mathrm g}.
\end{equation}
\begin{definition}
	\label{dfn1-8}We define the Lie subalgebra
	$\fraku^{\mathrm g}_+$ of $\fraku^{\mathrm g}_{*,\bullet}$
	by
	\[
	{\fraku}^{\mathrm g}_+=\left\langle\boldsymbol e_0^{2n}\boldsymbol e_{2n+2}\ \middle |n\geq 1\right\rangle .
	\]
\end{definition}
\section{The canonical subalgebra of the depth-graded motivic Lie algebra}\label{depth}
In this section, we introduce the canonical Lie subalgebra
of the motivic depth-graded Lie algebra
and relate it to ${\fraku}^{\mathrm g}_+$.
\subsection{The depth-graded motivic Lie algebra}
For $*=\dR,\mathrm B$, let
\[
\omega_*\colon \MT(\mathbf Z)\to \mathrm{Vec}_{\mathbf Q}^{\mathrm{fin}}
\]
be the composition of the $*$-realization functor and the forgetful functor.
Let $\pi_1^{\mathrm{un}}(\MT(\mathbf Z),\omega_*)$
be the prounipotent radical of $\pi_1(\MT(\mathbf Z),\omega_*)$ and let $\frakt_*$ be the graded Lie algebra
defined by
\[
\frakt_*={\Gr}^M_*\mathrm{Lie}\left(\pi_1^{\mathrm{un}}(\MT(\mathbf Z),\omega_{\mathrm B})\right),
\]
where $M_*$ denote the weight filtration induced by that of mixed Tate motives over $\bfZ$.
Note that $\frakt_*$ is naturally isomorphic
to ${\Gr}^M_*\mathrm{Lie}\left(\pi_1^{\mathrm{un}}(\MT(\mathbf Z),\omega_\dR)\right)$
because the fiber functor $\omega_{\mathrm B}\circ\Gr^{M}$
is naturally isomorphic to $\omega_{\dR}$.
According to \cite[(2.4.1), (2.1.3)]{De-G05}
there exists a natural isomorphism
\[
\mathrm H_1(\frakt_*)\cong \bigoplus_{r>0}{\Hom}_{\mathbf Z}(K_{2r+1}(\mathbf Q),\mathbf Q)
\]
of graded $\mathbf Q$-vector spaces, where $K_{2r+1}(\mathbf Q)$ is Quillen's higher $K$-group. Here, the degree of ${\Hom}(K_{2r+1}(\mathbf Q),\mathbf Q)$ is
defined to be $-2(2r+1)$. 
Let us \emph{choose} an element $\boldsymbol{\sigma}_{2r+1}$ of $\frakt_{-2(2r+1)}$
such that its image under the homomorphism
\[
\frakt_{-2(2r+1)}\to {\Hom}(K_{2r+1}(\mathbf Q),\mathbf Q)\subset {\Hom} (K_{2r+1}(\mathbf Q),\mathbf R)\isom{\Hom}_{\mathbf R}(\mathbf R,\mathbf R)
\]
sends $\zeta(2m+1)$ to one. Here, the last isomorphism above is induced by the Beilinson
regulator map.
Then, the Lie algebra $\frakt_*$ is generated by $\{\boldsymbol{\sigma_{2m+1}}\}_{m\geq 1}$
freely (\cite[Proposition 2.3]{De-G05}).
Note that such a choice is not unique \emph{at all}. See \cite{Brown15}, \cite{Brown17} for interesting trials
to find normalized lifts.

Let $\Pi$ be the de Rham realization of the motivic fundamental group of $\mathbf P^1_{\mathbf Q}\setminus\{0,1,\infty\}$
with the base point $\overrightarrow{01}$.
Then we have natural isomorphisms
\begin{equation}
	\label{eq9}
	\mathcal O(\Pi)=\bigoplus_{n\geq 0}\mathrm H^1_{\dR}(\mathbf P^1_{\mathbf Q}\setminus\{0,1,\infty\})^{\otimes n}=\mathbf Q\left\langle \frac{dt}{t},\frac{dt}{1-t}\right\rangle
\end{equation}
by \cite[3.13]{De-G05}.
Let $\mathbf Q\langle\!\langle x_0,x_1\rangle\!\rangle$
denote the dual of $\mathcal O(\Pi)$ such that each word in $x_0$ and $x_1$ is
dual to the corresponding word in $\frac{dt}{t}$ and $\frac{dt}{1-t}$.
Then, the graded Lie algebra
$\Gr^M_\bullet\mathrm{Lie}(\Pi)$
is naturally identified with the free Lie algebra
generated by $x_0$ and $x_1$:
\[
{\Gr}^M_*\mathrm{Lie}(\Pi)=\mathrm{Lie}(x_0,x_1)\subset \mathbf Q\langle x_0,x_1\rangle.
\]
We define $D^1\mathrm{Lie}(x_0,x_1)$ to be the kernel of the Lie homomorphism
\[
\mathrm{Lie}(x_0,x_1)\to \mathrm{Lie}(x_0);\quad x_0\mapsto x_0,\ x_1\mapsto 0.
\]
For $n\geq 2$,  $D^n\mathrm{Lie}(x_0,x_1)$ is defined by the equation
\[
D^n\mathrm{Lie}(x_0,x_1)=[D^1\mathrm{Lie}(x_0,x_1),D^{n-1}\mathrm{Lie}(x_0,x_1)].
\]
Note that the action of $\frakt_*$ on $\Lie(x_0,x_1)$ is factored as follows (\cite[(2.4), (2.5)]{Brown17}):
\[
\frakt_*\xrightarrow{i}\mathrm{Lie}(x_0,x_1)\xrightarrow{a}\mathrm{Der}(\mathrm{Lie}(x_0,x_1)).
\]	
Here, the homomorphism $a$ is given by the equations
\[
a(X)(e_0)=[e_0,X],\quad a(X)(e_1)=0.
\]
\begin{definition}
	\label{dfn2-1}\begin{itemize}
		\item[(1)]For a positive integer $d$,
		we define $D^d\frakt_*$ by
		\[
		D^d\frakt_*=i^{-1}(D^d(\Lie(x_0,x_1)))
		\]
		We call this descending filtration $D^\bullet\frakt_*$ the \emph{depth filtration}
		of $\frakt_*$.
		\item[(2)]The \emph{motivic depth-graded Lie algebra $\mathfrak d$} is defined by
		\[
		\mathfrak d=\bigoplus_{d>0}{\Gr}_D^d\frakt_*.
		\]
	\end{itemize}
\end{definition}
The Lie algebra $\mathfrak d$ has a natural structure of
a bi-graded Lie algebra.
\begin{example}
	\label{ex1}
	According to \cite[(2.6)]{Brown17}  we have a congruence
	\[
	i(\boldsymbol{\sigma}_{2m+1})\equiv \mathrm{ad}^{2m}(x_0)x_1\quad (\mathrm{mod}\ D^2\Lie(x_0,x_1)).
	\]
	Therefore, the image $\overline{\boldsymbol{\sigma}}_{2m+1}$ of $\boldsymbol{\sigma}_{2m+1}$
	in $\mathfrak d$ is non-trivial and does not depend on the choice of $\boldsymbol{\sigma}_{2m+1}$.
	In this paper, we call them \emph{canonical generators} of $\mathfrak d$ (cf.\ \cite[Subsection 1.4.1]{Brown15}).
\end{example}
\begin{definition}[{\cite[Definition 6.8]{Brown15}}]
	\label{cansubalg}Define a subalgebra $\mathfrak d^{\mathrm{odd}}$ of $\mathfrak d$
	to be the subalgebra generated by canonical generators:
	\[
	\mathfrak d^{\mathrm{odd}}=\left\langle \overline{\boldsymbol{\sigma}}_{2m+1}\ \middle |\ m\geq 1\right\rangle.
	\]
\end{definition}

\subsection{The depth-weight compatibility hypothesis}\label{Hyp}
The functor $\bv^*\colon \ME_p(\scrM_{1,1})\to {\MT}_p(\mathbf Z)$
and the equivalence in Proposition \ref{prop1-2}
induces a natural injective homomorphism
\begin{equation}
	\label{eq10}
	{\bv}_*\colon \frakt_*\otimes_{\mathbf Q}\mathbf Q_p\hookrightarrow {\Gr}^{W^\infty}_*\fraku
\end{equation}
of the graded Lie algebras over $\mathbf Q_p$.
Note that, by definition, the weight filtration $M_*\mathrm{Lie}({\MT}(\mathbf Z),\omega_{\rmB})$
and the local weight filtration $W^\infty_*\mathrm{Lie}({\ME}_p(\scrM_{1,1}),\omega_{\rmB})$
are strictly compatible under the homomorphism $\bv_*$.
We define the \emph{global weight filtration $W_\bullet(\frakt_*\otimes_{\mathbf Q}\mathbf Q_p)$} by the equation
\begin{equation}
	\label{eq11}
	W_\bullet\left(\frakt_*\otimes_{\mathbf Q}\mathbf Q_p\right)={\bv}_*^{-1}\left(W_\bullet{\Gr}^{W^\infty}_*\fraku\right).
\end{equation}
Our main hypothesis is that there exists a compatibility of the depth filtration
and the global filtration:
\begin{hypothesis}
	\label{hyp2-2}Let $n$ and $m$ be negative integers. Then, we have the equality
	\[
	W_n(\frakt_{2m}\otimes_{\mathbf Q}\mathbf Q_p)=D^{m-n}\frakt_{2m}\otimes_{\mathbf Q}\mathbf Q_p.
	\]
	In particular, $W_\bullet(\frakt_*\otimes_{\mathbf Q}\mathbf Q_p)$ is defined over $\mathbf Q$
	and does not depend on the choice of $p$.
\end{hypothesis}
\begin{remark}\label{remhyp}
	Let $\mathfrak p$ denote the Lie algebra of the unipotent
	fundamental group of the punctured Tate elliptic curve with the Weierstrass
	tangential base point (\cite[Section 22]{HM15}) and let
	\[
	\mathfrak u\to \mathrm{Der}(\mathfrak p\widehat{\otimes}_{\mathbf Q}\mathbf Q_p)
	\]
	be the monodromy representation.
	If the above morphism is injective, then the hypothesis \ref{hyp2-2}
	is true by \cite[Corollary 28.10]{HM15}.
	
	Note that the monodromy representation above induced by the family $\scrM_{1,2}\to \scrM_{1,1}$,
	which is an elliptic analogue of the family $\scrM_{0,4}=\bfP^1\setminus\{0,1,\infty\}\to \scrM_{0,3}=\spec(\bfQ)$.
	The monodromy representation arising from the latter family is nothing but the outer action of $\scrG_{\bfQ}$ on $\pi_1^{\et}(\bfP^1_{\overline{\bfQ}}\setminus\{0,1,\infty\})$.
	Therefore, the injectivity of the monodromy representation can be regarded as an elliptic analogue of Brown's faithfulness theorem (\cite{Brown12}). A motivic version of this problem has already been
	raised in \cite[Question 26.2]{HM15}.
\end{remark}
Under the hypothesis above, the Lie homomorphism $\bv_*$ induces an inclusion
\begin{equation}
	\label{eq12}
	{\bv}_*\colon \mathfrak d\otimes_{\mathbf Q}\mathbf Q_p\hookrightarrow{\Gr}^W_\bullet{\Gr}^{W^\infty}_*\fraku
\end{equation}
of Lie algebras.
We put
\begin{equation}
	\label{eq16}
	\breve{\boldsymbol e}_{2m+2}:=\frac{1}{(2m)!}\boldsymbol e_0^{2m}\boldsymbol e_{2m+2}.
\end{equation}
The following proposition can be thought as a kind of ``Eisenstein congruence'':
\begin{proposition}
	\label{keyprop}
	Under Hypothesis $\ref{hyp2-2}$, we have that
	\[
	{\bv}_*(\overline{\boldsymbol{\sigma}}_{2m+1})=\breve{\boldsymbol e}_{2m+2}.
	\]
	In particular, $\bv_*$ induces a natural isomorphism
	\[
	\mathfrak d^{\mathrm{odd}}\otimes_{\mathbf Q}\mathbf Q_p\xrightarrow{\sim}\mathfrak u_+^{\mathrm g}
	\]
	of Lie algebras.
\end{proposition}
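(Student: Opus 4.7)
The plan is to proceed in three stages: (i) identify the bidegree housing $\bv_*(\overline{\boldsymbol\sigma}_{2m+1})$, (ii) show that this bigraded piece of $\fraku$ is one-dimensional and spanned by $\boldsymbol e_0^{2m}\boldsymbol e_{2m+2}$, and (iii) pin down the scalar.

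For (i), by Example \ref{ex1} the element $\overline{\boldsymbol\sigma}_{2m+1}$ lies in the depth-one part of $\mathfrak d$. Applying Hypothesis \ref{hyp2-2} with $2m'=-2(2m+1)$ and depth $d=1$ yields the corresponding global weight $m'-d=-2m-2$. Via the embedding (\ref{eq12}) we then obtain
\[
\bv_*(\overline{\boldsymbol\sigma}_{2m+1}) \in \fraku_{-2(2m+1),-2m-2}.
\]

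For (ii), I would prove that this graded piece is one-dimensional and spanned by $\boldsymbol e_0^{2m}\boldsymbol e_{2m+2}$. By Proposition \ref{prop1-5}, the primitive component $\mathrm H_1(\fraku)_{-2(2m+1),-2m-2}$ is the lowest-local-weight line of the $\Sym^{2m}(V)(2m+1)$-isotypic piece, which is one-dimensional and represented by $\boldsymbol e_0^{2m}\boldsymbol e_{2m+2}$. Following the bidegree computation in the proof of Lemma \ref{lem1-7}, any bracket of generators decomposes into $\GL_{2,\bfQ_p}$-pieces of the form $\Sym^u(V)(u+1+w)$ with $w\geq 1$; such a piece has global weight $-u-2-2w$ and local weights in $[-2(u+1+w),-2(1+w)]$. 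Matching the target global weight $-2m-2$ forces $u+2w=2m$, while matching local weight $-2(2m+1)$ forces $u+w\geq 2m$; combined, these give $w\leq 0$, contradicting $w\geq 1$. Hence $\Gamma^2\fraku\cap\fraku_{-2(2m+1),-2m-2}=0$, so the graded piece equals its primitive quotient, which is one-dimensional.

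Consequently $\bv_*(\overline{\boldsymbol\sigma}_{2m+1}) = c_m\,\boldsymbol e_0^{2m}\boldsymbol e_{2m+2}$ for a unique scalar $c_m\in\bfQ_p$, and (iii) amounts to showing $c_m=1/(2m)!$, i.e.\ matching the two normalizations: $\boldsymbol\sigma_{2m+1}$ is fixed so that the Beilinson regulator sends $\zeta(2m+1)$ to $1$, while $\boldsymbol e_{2m+2}$ is dual to the class $\varphi_{2m+2}$ of the normalized Eisenstein series $E_{2m+2}$. The factor $(2m)!^{-1}$ is expected to arise from combining the $\SL_2$-representation-theoretic translation between the highest-weight vector $\boldsymbol e_{2m+2}$ and its $2m$-fold lowered translate $\boldsymbol e_0^{2m}\boldsymbol e_{2m+2}$ with the classical constant-term formula expressing $E_{2m+2}$ at the cusp in terms of $\zeta(2m+1)$ up to controlled $(2\pi i)$-factors, transported through the Betti--de~Rham comparison. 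This third step is the main obstacle of the proposition; by contrast, steps (i) and (ii) are essentially bookkeeping within the bigraded structure established in Section \ref{cryscompl}. The ``in particular'' statement that $\bv_*$ induces an isomorphism $\mathfrak d^{\mathrm{odd}}\otimes_{\bfQ}\bfQ_p\isom \fraku_+^{\mathrm g}$ then follows at once: both Lie algebras are by definition generated by the respective canonical generators $\overline{\boldsymbol\sigma}_{2m+1}$ and $\breve{\boldsymbol e}_{2m+2}$, while the injectivity of $\bv_*$ on $\mathfrak d\otimes\bfQ_p$ is exactly (\ref{eq12}).
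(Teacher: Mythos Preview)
Your steps (i) and (ii) are correct, and in fact your approach to (ii) is a mild simplification of the paper's route. The paper first invokes Lemma~\ref{monodoromyinvariance} to show that $\bv_*(\overline{\boldsymbol\sigma}_{2m+1})$ is $\boldsymbol e_0$-invariant, and only then applies Lemma~\ref{lem1-7} to the one-dimensional space $\fraku_{-2(2m+1),-2m-2}^{\boldsymbol e_0=0}$. You observe instead that the same bidegree analysis underlying Lemma~\ref{lem1-7} already shows the \emph{entire} piece $\fraku_{-2(2m+1),-2m-2}$ is one-dimensional, so the $\boldsymbol e_0$-invariance is not needed for this particular proposition. Your weight bookkeeping ($u+2w=2m$ from the global weight, $u+w\geq 2m$ from the local weight, forcing $w\leq 0$) is correct.

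The genuine gap is in step (iii). You acknowledge this is the ``main obstacle'' but your proposed mechanism---comparing the regulator normalization of $\boldsymbol\sigma_{2m+1}$ against the constant term of $E_{2m+2}$ through Betti--de~Rham comparison---is not how the constant is actually pinned down, and as stated it is too vague to constitute a proof. The paper's method is different and concrete: one passes to the monodromy representation $\fraku\to\mathrm{Der}(\mathfrak p)$ on the Lie algebra of the unipotent fundamental group of the punctured Tate elliptic curve (cf.\ \cite[Section~22]{HM15}). In that representation both $\bv_*(\boldsymbol\sigma_{2m+1})$ and $\boldsymbol e_0^{2m}\boldsymbol e_{2m+2}$ act by explicit derivations, and the comparison has been carried out by Hain in \cite[Theorem~15.7]{Ha14}; this is where the factor $1/(2m)!$ comes from. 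Without either reproducing or citing that computation, your argument for (iii) is incomplete.
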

For a proof of the proposition above, we prepare a lemma.
\begin{lemma}
	\label{monodoromyinvariance}
	The image of $\mathfrak d\otimes_{\mathbf Q}\mathbf Q_p$ under $\bv_*$ is annihilated
	by the adjoint action of $\boldsymbol e_0$.
\end{lemma}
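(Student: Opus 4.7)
The plan is to reduce the statement to one depth-one generator of $\mathfrak d\otimes_{\mathbf Q}\mathbf Q_p$ and then verify the vanishing by a bigrading computation inside $\mathfrak u$. Since $\frakt_*$ is the free Lie algebra on $\{\boldsymbol{\sigma}_{2k+1}\}_{k\geq 1}$ and each $\boldsymbol{\sigma}_{2k+1}$ lies in $D^1\setminus D^2$ by Example \ref{ex1}, the depth-graded $\mathfrak d=\Gr_D\frakt_*$ is generated as a graded Lie algebra by the classes $\overline{\boldsymbol{\sigma}}_{2k+1}$; in other words $\mathfrak d=\mathfrak d^{\mathrm{odd}}$. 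Because $\boldsymbol e_0$ acts on $\mathfrak u$ by the adjoint action of an element of $\mathrm{Lie}(\mathrm{spl}(\mathrm{GL}_{2,\mathbf Q_p}))$ it is a Lie derivation, so its kernel in $\Gr^W_\bullet\Gr^{W^\infty}_*\mathfrak u$ is a Lie subalgebra; it therefore suffices to prove $\boldsymbol e_0\cdot\bv_*(\overline{\boldsymbol{\sigma}}_{2k+1})=0$ for every $k\geq 1$.

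Next I locate the relevant bigraded pieces. Hypothesis \ref{hyp2-2}, applied with $m=-(2k+1)$ and $d=1$, combined with (\ref{eq12}), places $\bv_*(\overline{\boldsymbol{\sigma}}_{2k+1})$ in $\mathfrak u_{-(4k+2),-(2k+2)}$. Since $\boldsymbol e_0$ preserves global weight and decreases local weight by two, $\boldsymbol e_0\cdot\bv_*(\overline{\boldsymbol{\sigma}}_{2k+1})\in\mathfrak u_{-(4k+4),-(2k+2)}$, and the task reduces to showing that this bigraded piece is trivial.

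For the triviality I use an ``excess'' inequality drawn from Proposition \ref{prop1-6}. For a bi-homogeneous element $x\in\mathfrak u_{2m,n}$ set $\epsilon(x):=n-2m$. A direct check shows that every topological generator satisfies $\epsilon\leq|n|-2$: for $\boldsymbol z_{2m+1}$ one has $\epsilon=0$ while $|n|=4m+2$; for $\boldsymbol e_0^{i}\boldsymbol e_{2s+2}$ one has $\epsilon=2i-2s\leq 2s$ while $|n|=2s+2$. Since the bracket on the bigraded algebra is bihomogeneous, both $\epsilon$ and $|n|$ are additive under commutators; any $c$-fold nested commutator of generators landing at global weight $n$ therefore satisfies $\epsilon\leq|n|-2c$. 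In the target one has $\epsilon=2k+2=|n|$, and this inequality forces $c\leq 0$, a contradiction. Hence $\mathfrak u_{-(4k+4),-(2k+2)}=0$, completing the proof. The main conceptual step is realizing that, after invoking Hypothesis \ref{hyp2-2}, the lemma reduces to the emptiness of a single bigraded piece of $\mathfrak u$ determined purely by the weights of its Lie-algebra generators; once this viewpoint is adopted, the combinatorial inequality is immediate.
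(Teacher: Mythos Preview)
Your reduction to $\mathfrak d^{\mathrm{odd}}$ contains a genuine gap: the assertion that $\mathfrak d=\mathfrak d^{\mathrm{odd}}$ does not follow from the fact that $\frakt_*$ is freely generated by the $\boldsymbol\sigma_{2k+1}$. If a Lie algebra $L$ is generated by elements lying in the first step $D^1$ of a multiplicative filtration $D^\bullet$, it is \emph{not} automatic that $\Gr_D L$ is generated by their images; that would require $D^\bullet$ to coincide with the lower central series. Here it does not: by Theorem~\ref{thm3-2} certain $\bfQ$-linear combinations $\sum a_{i,j}[\boldsymbol\sigma_{2i+1},\boldsymbol\sigma_{2j+1}]$ (those corresponding to restricted even period polynomials) lie in $D^3\frakt_*$ although they are two-fold brackets. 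Their images in $\Gr^3_D$ need not be expressible as three-fold brackets of the $\overline{\boldsymbol\sigma}_{2k+1}$, and whether every element of $\mathfrak d$ lies in $\mathfrak d^{\mathrm{odd}}$ is an open problem tied to the Broadhurst--Kreimer conjecture.

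Your bigrading computation is correct and does establish that each $\bv_*(\overline{\boldsymbol\sigma}_{2k+1})$ is killed by $\boldsymbol e_0$; since $\ker(\mathrm{ad}\,\boldsymbol e_0)$ is a Lie subalgebra, you have in effect proved the statement for $\mathfrak d^{\mathrm{odd}}$ (which, incidentally, is all that Proposition~\ref{keyprop} actually uses). To prove the lemma as stated the paper argues quite differently: it uses that the monodromy element $\varepsilon=\rho_{\mathrm{univ}}\!\left(\begin{smallmatrix}1&1\\0&1\end{smallmatrix}\right)$ commutes with the image of $\scrG_{\bfQ(\mu_{p^\infty})}$, hence stabilizes $\bv_*(\frakt_*\otimes\bfQ_p)$, and deduces from $\mathrm{spl}(\exp\boldsymbol e_0)=\varepsilon u$ with $u\in U$ that $\mathrm{ad}(\boldsymbol e_0)$ strictly lowers the global weight on $\bv_*(\frakt_*\otimes\bfQ_p)$. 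Under Hypothesis~\ref{hyp2-2} this means $\boldsymbol e_0$ annihilates every $W$-graded leading term, which is exactly $\bv_*(\mathfrak d\otimes\bfQ_p)$. This argument never refers to generators of $\mathfrak d$ and so handles the full depth-graded algebra at once.
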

\begin{proof}
	Let
	\[
	\rho_{\mathrm{univ}}\colon \pi_1^{\et}(\scrM_{1,1},v)=\widehat{\mathrm{SL}_2(\mathbf Z)}\rtimes \mathscr G_{\mathbf Q}\to \pi_1({\ME}_p(\scrM_{1,1}),v)(\mathbf Q_p)
	\]
	be the universal representation, which is induced by the natural fully faithful functor
	\[
	{\ME}_p(\scrM_{1,1})\hookrightarrow\mathrm{Sm}_{\mathbf Q_p}(\scrM_{1,1}).
	\]
	Let us define an element $\varepsilon$ of $\pi_1({\ME}_p(\scrM_{1,1}),v)(\mathbf Q_p)$ to be the image
	of $\begin{pmatrix}
		1&1\\
		0&1
	\end{pmatrix}\in \mathrm{SL}_2(\mathbf Z)$ under $\rho_{\mathrm{univ}}$.
	Then, since the image of $\varepsilon$ under the natural surjection
	$ \pi_1({\ME}_p(\scrM_{1,1}),v)\to \GL_{2,\bfQ_p}$ coincides with
	$\exp(\boldsymbol e_0)$, we have
	\begin{equation}
		\label{eq14}
		\mathrm{spl}(\exp(\boldsymbol e_0))=\varepsilon u
	\end{equation}
	for some $u\in U(\mathbf Q_p)$.
	
	Since the pull-back by $v$ induces a continuous injective homomorphism
	\begin{equation}
		\label{eq13}
		\mathrm{Gal}\left(\overline{\mathbf Q}\{\!\{q\}\!\}/\mathbf Q(\!(q)\!)\right)=\widehat{\mathbf Z}(1)\rtimes \mathscr G_{\mathbf Q}\hookrightarrow \pi_1^{\et}(\scrM_{1,1},v)
	\end{equation}
	and a generator of $\widehat{\mathbf Z}(1)$ is sent to $\begin{pmatrix}
		1&1\\0&1
	\end{pmatrix}$ under the map
	\[
	\widehat{\mathbf Z}(1)\rtimes \mathscr G_{\mathbf Q}\hookrightarrow\pi_1^{\et}(\scrM_{1,1},v)\to \GL_2(\widehat{\bfZ}),
	\]
	any element of $\rho_{\mathrm{univ}}(\mathscr G_{\mathbf Q(\mu_{p^\infty})})$
	commutes with $\varepsilon$.
	Therefore, the image of $\frakt_*\otimes\mathbf Q_p$ under $\bv_*$ stabilized by the adjoint action of
	$\varepsilon$ because the universal representation
	\[
	\mathscr G_{\mathbf Q(\mu_{p^\infty})}\to \pi_1^{\mathrm{un}}({\MT}_p(\mathbf Z),v)(\mathbf Q_p)
	\]
	has a Zariski dense image by \cite[Theorem 8.7]{HM03} and \cite[Theorem 1.1]{Brown12}.
	Here, $\pi_1^{\mathrm{un}}({\MT}_p(\mathbf Z),v)$ is the prounipotent
	radical of $\pi_1({\MT}_p(\mathbf Z),v)$.
	Therefore, $\log(\epsilon) \in \Lie(\pi_1(\ME(\scrM_{1,1}),v))$ commutes with any element of $\frakt_*\otimes\mathbf Q_p$.
	By the Campbell-Hausdorff theorem (\cite[Theorem 7.4]{SerreLie}), we have
	\[
	\mathrm{spl}'(\be_0)=\sum_{n=1}^\infty z_n(\log(\varepsilon),\log(u))
	\]
	for some homogeneous Lie polynomials $z_n(X,Y)$ of degree $n$.
	Note that the adjoint action of $\log(u)$ on
	$\mathfrak u$ decreases their global weights because any global weight of $\fraku$ is negative.
	Therefore,	the homomorphism
	\[
	\mathrm{ad}(\mathrm{spl}'(\be_0))\colon \bv_*(\frakt_*\otimes_{\mathbf Q}\mathbf Q_p)\to  {\Gr}^{W^\infty}_*\fraku
	\] 
	also decreases their global weights.
	
	For $x\in \bv_*(\frakt_{-2m}\otimes_{\mathbf Q}\mathbf Q_p)$,
	we expand $x$ as
	\[
	x=\sum_{r<0}x^{(r)},
	\]
	where $x^{(r)}\in \mathfrak u_{-2m,r}$.
	Let $r_0$ be the maximal integer such that $x^{(r_0)}$ is non-zero
	and we call $x^{(r_0)}$ the leading term of $x$.
	By the argument of the previous paragraph, the weight of $[\mathrm{spl}'(\be_0),x]$
	is less than $r_0$.	On the other hand, as the action of $\mathrm{Lie}(\GL_{2,\bfQ_p}))$ on $\fraku$ preserves the global weight splitting
	\[
	\Gr^W_r\fraku=\prod_{m<0}\mathfrak u_{-2m,r}\subset \fraku=\prod_{m,r<0}\mathfrak u_{-2m,r}
	\]
	by the definition of the splitting, $\mathrm{ad}(\mathrm{spl}'(\be_0))(x^{(r)})$ is contained in $\prod_{m<0}\mathfrak u_{-2m,r}$ for any $r$.
	This implies that the equation
	\[
	[\mathrm{spl}'(\be_0),x^{(r_0)}]=0
	\]
	holds in $\fraku$.
	By Hypothesis \ref{hyp2-2}, the Lie algebra $\bv_*(\mathfrak d\otimes_{\mathbf Q}\mathbf Q_p)$
	is the Lie algebra generated by leading terms of
	elements of $\bv_*(\frakt_*\otimes_{\mathbf Q}\mathbf Q_p)$,
	we have the conclusion of the lemma.
\end{proof}
\begin{proof}[Proof of Proposition $\ref{keyprop}$]Recall that the depth of $\overline{\boldsymbol{\sigma}}_{2m+1}$ is one (Example \ref{ex1}) and its local weight is equal to $-2(2m+1)$. Moreover,
	by Lemma \ref{monodoromyinvariance}, $\bv_*(\overline{\boldsymbol{\sigma}}_{2m+1})$ is annihilated
	by $\boldsymbol e_0$. Therefore, by Lemma \ref{lem1-7}, we have the equation
	\[
	{\bv}_*(\overline{\boldsymbol{\sigma}}_{2m+1})=\alpha \boldsymbol e_0^{2m}\boldsymbol e_{2m+2}
	\]
	for some $\alpha\in \mathbf Q_p^\times$.
	To determine the constant $\alpha$, it is sufficient to
	compute the image of $\bv_*(\overline{\boldsymbol{\sigma}}_{2m+1})$ under the
	monodromy representation on the graded Lie algebra associated with the punctured
	Tate elliptic curve.
	This was already computed by Hain-Matsumoto (\cite[Theorem 29.4]{HM15})
	and we conclude that $\alpha$ is equal to $1/(2m)!$.
	This finishes the proof of the proposition.
\end{proof}
\section{Lower bounds on the space of cup products of Eisenstein classes}\label{injectivity}
Let $m,i,j$ be non-negative integers such that $m=i+j$.
Then, the natural morphism
\[
\calV^{2i}\otimes_{\mathbf Q_p}\calV^{2j}\to \calV^{2m}
\]
in ${\ME}_p(\scrM_{1,1})$ induces a homomorphism
\begin{multline}
	\label{cupprod}
	\cup\colon {\Ext}^1_{{\ME}_p(\scrM_{1,1})}(\mathbf Q_p,\calV^{2i}(2i+1))\otimes_{\mathbf Q_p}{\Ext}_{{\ME}_p(\scrM_{1,1})}^1(\mathbf Q_p,\calV^{2j}(2j+1))\\
	\longrightarrow {\Ext}_{{\ME}_p(\scrM_{1,1})}^2(\mathbf Q_p,\calV^{2m}(2m+2)).
\end{multline}
We call the map $\cup$ a \emph{cup product} simply.
In this section, assuming Hypothesis \ref{hyp2-2}, we give a lower bound for
the dimension of the space generated by 
$\{\varphi_{2i+2}\cup\varphi_{2j+2}\}_{i+j=m}$ in the second extension group above.
According to the proof of \cite[Proposition 13.1]{HM15}, we have a natural isomorphism
\[
 {\Ext}^1_{{\ME}_p(\scrM_{1,1})}(\mathbf Q_p,\calV^{2i}(2i+1))\cong \rmE_{2i}:=\rmH^1_{\et}(\scrM_{1,1},\calV^{2i}(2i+1)).
\]
Therefore, the cup product above can be regarded as a homomorphism
\[
\rmE_{2i}\otimes_{\bfQ_p}\rmE_{2j}\to {\Ext}_{{\ME}_p(\scrM_{1,1})}^2(\mathbf Q_p,\calV^{2m}(2m+2))
\]
naturally.
By abuse of notation, we also write $\cup$ the twisted homomorphism
\[
\rmE_{2i}(-2i-1)\otimes_{\bfQ_p}\rmE_{2j}(-2j-1)\to {\Ext}_{{\ME}_p(\scrM_{1,1})}^2(\mathbf Q_p,\calV^{2m}(2m+2))(-2m-2)
\]
by the $(-2m-2)$-times of the standard character of $\bfG_m$.
\subsection{Relations among canonical generators}
\begin{definition}[cf.\ {\cite[Definition 7.1]{Brown15}}]
	\label{dfn3-1}A \emph{restricted even period polynomial of degree $2m$}
	is a two-variable homogeneous polynomial $f(x,y)$ with coefficients in $\mathbf C$ of degree $2m$
	satisfying the following four equations:
	\begin{equation}
		f(x,0)=0,
	\end{equation}
	\begin{equation}
		f(\pm x,\pm y)=f(x,y),
	\end{equation}
	\begin{equation}
		f(x,y)+f(y,x)=0,
	\end{equation}
	\begin{equation}
		f(x,y)+f(x-y,x)+f(-y,x-y)=0.
	\end{equation}
	Let $S_{2m+2}$ denote the set of the restricted period
	polynomials of degree $2m$ with \emph{rational coefficients}.
\end{definition}
\begin{theorem}
	\label{thm3-2}Let $m$ be a positive integer and let
	$(a_{i,j})_{i+j=m,\ i,j\geq 1,\ i<j}$ be a set of rational numbers.
	Then, the equation
	\[
	\sum_{i<j,\ i+j=m}a_{i,j}\left[\overline{\boldsymbol \sigma}_{2i+1},\overline{\boldsymbol \sigma}_{2j+1}\right]=0
	\]
	holds in $\mathfrak d$ if and only if
	the polynomial
	$
	\sum_{i<j,\ i+j=m}a_{i,j}(x^{2i}y^{2j}-y^{2i}x^{2j})
	$ is an element of $S_{2m+2}$.
\end{theorem}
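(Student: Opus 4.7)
The plan is to transport the Lie-algebra identity in $\mathfrak d$ to a polynomial identity by means of Brown's faithfulness theorem, and then to recognize the resulting identity as the defining three-term functional equation of a restricted even period polynomial.

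First, I would invoke the Ihara embedding. The motivic Galois action on the de Rham fundamental group of $\mathbf P^1_{\mathbf Q}\setminus\{0,1,\infty\}$ produces the homomorphism $i\colon\frakt_*\to \Lie(e_0,e_1)$ already appearing before Definition \ref{dfn2-1}. By Brown's faithfulness theorem \cite{Brown12}, $i$ is injective. Endowing $\Lie(e_0,e_1)$ with the Ihara bracket---the one corresponding to the commutator of derivations $a(X),a(Y)$ via $a(X)(e_0)=[e_0,X]$, $a(X)(e_1)=0$---turns $i$ into a Lie algebra homomorphism. Since the depth filtration on $\frakt_*$ is by definition the pullback of the $e_1$-degree filtration, passing to associated graded algebras yields an injection $\mathfrak d\hookrightarrow \Gr^{\deg_{e_1}}\Lie(e_0,e_1)$ of bigraded Lie algebras, through which Example \ref{ex1} identifies $\overline{\boldsymbol\sigma}_{2m+1}$ with $\mathrm{ad}^{2m}(e_0)(e_1)$.

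Second, I would express the bracket $[\overline{\boldsymbol\sigma}_{2i+1},\overline{\boldsymbol\sigma}_{2j+1}]$ as a concrete polynomial. Under the standard identification of depth-$1$ elements $p(\mathrm{ad}(e_0))(e_1)$ with $p(x)\in\mathbf Q[x]$, the generator $\overline{\boldsymbol\sigma}_{2m+1}$ becomes the monomial $x^{2m}$. A PBW-type argument identifies the depth-$2$ piece of $\Lie(e_0,e_1)$ with antisymmetric polynomials in two variables via $[\mathrm{ad}^a(e_0)(e_1),\mathrm{ad}^b(e_0)(e_1)]\mapsto x^ay^b-y^ax^b$, and a direct computation of the linearized Ihara bracket of two depth-$1$ elements $f(x),g(x)$ shows that for $f=x^{2i},g=x^{2j}$ the resulting depth-$2$ polynomial is (up to a universal constant) $x^{2i}y^{2j}-y^{2i}x^{2j}$. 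Hence the vanishing of $\sum_{i<j}a_{i,j}[\overline{\boldsymbol\sigma}_{2i+1},\overline{\boldsymbol\sigma}_{2j+1}]$ in $\mathfrak d$ is equivalent to the vanishing, in the depth-$2$ polynomial model, of the corresponding sum of such antisymmetric monomials.

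Third, I would identify the space of depth-$2$ relations with $S_{2m+2}$. For any polynomial of the form $f(x,y)=\sum_{i<j}a_{i,j}(x^{2i}y^{2j}-y^{2i}x^{2j})$ with $i,j\geq1$, the first three defining conditions of Definition \ref{dfn3-1}, namely $f(x,0)=0$, $f(\pm x,\pm y)=f(x,y)$, and $f(x,y)+f(y,x)=0$, are automatic. The only nontrivial condition is the three-term functional equation $f(x,y)+f(x-y,x)+f(-y,x-y)=0$, which appears as the cocycle relation controlling the depth-$2$ Ihara bracket: it encodes exactly the Jacobi-type cancellations needed for a sum of brackets to vanish in $\mathfrak d$, and ultimately reflects the dihedral symmetry of the associator equation (equivalently, the $\mathrm{SL}_2(\mathbf Z)$-action on the modular curve, via Eichler--Shimura theory).

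The main obstacle will be the reverse implication, namely showing that \emph{every} $f\in S_{2m+2}$ arises from a genuine relation in $\mathfrak d$. The forward direction (relation in $\mathfrak d$ yields a period polynomial) reduces to the explicit Ihara bracket calculation once the polynomial model is in place. The reverse direction is considerably more delicate: one must verify that the natural map from depth-$2$ Lie relations to $S_{2m+2}$ has no extraneous kernel, which ultimately rests on Pollack's explicit construction of depth-$2$ relations and on Brown's control of the depth-$2$ cokernel in terms of weight $2m+2$ cusp forms for $\mathrm{SL}_2(\mathbf Z)$ via the Eichler--Shimura isomorphism.
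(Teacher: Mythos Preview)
The paper does not actually prove this theorem: immediately after the statement it writes ``For a proof of this theorem, see \cite[Section 7]{Brown15}.'' So there is nothing in the paper to compare your argument against beyond the observation that Brown's faithfulness theorem is the essential input, which you correctly identify.

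Your sketch follows the route taken in \cite{Brown15}, but your second step conflates two different things. The assignment $[\overline{\boldsymbol\sigma}_{2i+1},\overline{\boldsymbol\sigma}_{2j+1}]\mapsto x^{2i}y^{2j}-y^{2i}x^{2j}$ is \emph{not} the linearized Ihara bracket; it is simply the tautological identification of $\bigwedge^2$ of the span of the $\overline{\boldsymbol\sigma}_{2m+1}$ with the space of antisymmetric even polynomials divisible by $xy$. The Ihara bracket of $x^{2i}$ and $x^{2j}$ in the depth-$2$ polynomial model is a genuinely different polynomial, containing extra terms coming from the derivation part of the Ihara action. The content of the theorem is that the bracket map
\[
\bigwedge^2(\text{depth }1)\longrightarrow(\text{depth }2),
\]
written in these polynomial coordinates, has kernel cut out precisely by the three-term relation of Definition \ref{dfn3-1}; this is where the explicit Ihara-bracket formula enters, not in your step two. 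Once that is straightened out, your third paragraph is the correct place for the computation, and your remark that the converse direction (every restricted even period polynomial arises from a genuine relation) is the subtler half---relying on the Ihara--Takao/Pollack relations together with Brown's injectivity---is accurate.
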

\begin{proof}
In \cite[Definition 6.7]{Brown17}, Brown defined a graded Lie algebra $\bigoplus_{n\geq 1}\mathbb D_n$ and an injective Lie homomorphism
\[
\overline{\rho}\colon \frakd\intomap \bigoplus_{n\geq 1}\mathbb D_n
\]
which is isomorphism in degree one.
Therefore, the left-side hand in the equation above is equal to zero if and only if the equation
\[
\sum_{i<j,\ i+j=m}a_{i,j}\left[\overline{\rho}(\overline{\boldsymbol \sigma}_{2i+1}),\overline{\rho}(\overline{\boldsymbol \sigma}_{2j+1})\right]=0
\]
holds.
Then, the conclusion of the theorem is a direct consequence of \cite[(7.7)]{Brown17}.
\end{proof}
\begin{corollary}
	\label{cor3-3}Let us use the same notation as the theorem above.
	Moreover, we suppose that Hypothesis \ref{hyp2-2} is true.
	Then, the equation
	\[
	\sum_{i<j,\ i+j=m}a_{i,j}\left[\breve{\boldsymbol e}_{2i+2},\breve{\boldsymbol e}_{2i+2}\right]=0
	\]
	holds in $\mathfrak u_+^{\mathrm g}$ \emph{if and only if}
	the polynomial
	$
	\sum_{i<j,\ i+j=m}a_{i,j}(x^{2i}y^{2j}-y^{2i}x^{2j})
	$ is an element of $S_{2m+2}$.
\end{corollary}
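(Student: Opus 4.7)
The plan is to transfer Theorem \ref{thm3-2} across the Lie algebra isomorphism provided by Proposition \ref{keyprop}. Under Hypothesis \ref{hyp2-2}, that proposition gives
\[
\bv_*(\overline{\boldsymbol\sigma}_{2m+1}) = \breve{\boldsymbol e}_{2m+2}
\]
and asserts that $\bv_*$ restricts to an isomorphism $\mathfrak d^{\mathrm{odd}} \otimes_{\mathbf Q} \mathbf Q_p \xrightarrow{\sim} \mathfrak u^{\mathrm g}_+$ of Lie algebras. In particular, brackets are preserved on both sides, so the relation $\sum_{i<j} a_{i,j}[\breve{\boldsymbol e}_{2i+2}, \breve{\boldsymbol e}_{2j+2}] = 0$ in $\mathfrak u^{\mathrm g}_+$ holds if and only if the corresponding relation $\sum_{i<j} a_{i,j}[\overline{\boldsymbol\sigma}_{2i+1}, \overline{\boldsymbol\sigma}_{2j+1}] = 0$ holds in $\mathfrak d^{\mathrm{odd}} \otimes_{\mathbf Q} \mathbf Q_p$.

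Next I would descend this $\mathbf Q_p$-relation to a $\mathbf Q$-relation in $\mathfrak d^{\mathrm{odd}}$, and then regard it as a relation in the ambient Lie algebra $\mathfrak d$. Both steps are formal: since the coefficients $a_{i,j}$ are rational, faithful flatness of $\mathbf Q_p/\mathbf Q$ shows that the relation holds over $\mathbf Q_p$ if and only if it holds over $\mathbf Q$; and since $\mathfrak d^{\mathrm{odd}}$ is by definition a Lie subalgebra of $\mathfrak d$, the inclusion $\mathfrak d^{\mathrm{odd}} \hookrightarrow \mathfrak d$ preserves kernels of linear relations among bracket elements. Applying Theorem \ref{thm3-2} to this relation in $\mathfrak d$ then gives the desired equivalence with the condition $\sum_{i<j} a_{i,j}(x^{2i}y^{2j} - y^{2i}x^{2j}) \in S_{2m+2}$.

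There is essentially no obstacle here once Proposition \ref{keyprop} is in place: the corollary is a direct transport of the known relations in $\mathfrak d^{\mathrm{odd}}$ along an explicit isomorphism. The only substantive input, aside from Theorem \ref{thm3-2}, is Hypothesis \ref{hyp2-2}, which is precisely what makes Proposition \ref{keyprop} available and which pins down the identification $\overline{\boldsymbol\sigma}_{2m+1} \leftrightarrow \breve{\boldsymbol e}_{2m+2}$ needed to match the two sides of the equivalence.
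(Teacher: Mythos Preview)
Your proof is correct and follows essentially the same route as the paper, which simply states that the corollary is a direct consequence of Theorem~\ref{thm3-2} and Proposition~\ref{keyprop}. The extra care you take in descending from $\mathbf Q_p$ to $\mathbf Q$ and in noting that $\mathfrak d^{\mathrm{odd}}\hookrightarrow\mathfrak d$ preserves relations is fine and makes the argument more explicit, but adds nothing new beyond what the paper intends.
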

\begin{proof}This is a direct consequence of Theorem
	\ref{thm3-2} and Proposition \ref{keyprop}.
\end{proof}
\begin{corollary}
	\label{cor3-4}Suppose that Hypothesis \ref{hyp2-2} is true. Then, under the identification
	\[
	\bigwedge^2\mathrm H_1(\mathfrak u_+^{\mathrm g})\xrightarrow{\sim}\bigoplus_{1\leq i<j}\mathbf Q(x^{2i}y^{2j}-y^{2i}x^{2j});\quad \breve{\boldsymbol e}_{2i+2}\wedge\breve{\boldsymbol e}_{2j+2}\mapsto x^{2i}y^{2j}-y^{2i}x^{2j},
	\]
	the image of the dual of the cup product $($cf.\ \cite[Section 18]{HM15}$)$
	\[
	\mathrm H_2(\mathfrak u_+^{\mathrm g})\to \bigwedge^2\mathrm H_1(\mathfrak u_+^{\mathrm g})
	\]
	coincides with $\bigoplus_{m\geq 6}S_{2m}\otimes_{\mathbf Q}\mathbf Q_p$.
\end{corollary}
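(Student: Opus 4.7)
The plan is to exploit the depth grading on $\mathfrak u_+^{\mathrm g}$, transported from $\mathfrak d^{\mathrm{odd}}$ via Proposition \ref{keyprop}, in order to reduce the computation of the image of the dual cup product to its depth-two (i.e.\ ``quadratic'') component, and then to invoke Corollary \ref{cor3-3} directly.

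First, I would present $\mathfrak u_+^{\mathrm g}$ as a quotient $\mathfrak f/\mathfrak r$, where $\mathfrak f$ is the free bigraded pro-nilpotent Lie algebra on topological generators $x_{2m+2}$ $(m\geq 1)$ placed in the same bidegree as $\breve{\boldsymbol e}_{2m+2}$, and $\mathfrak r$ is the ideal of relations. Since the bracket adds depths and all generators have depth one, both $\mathfrak r$ and $H_2(\mathfrak u_+^{\mathrm g})$ inherit a depth grading. There are no linear (depth-one) relations among the $\breve{\boldsymbol e}_{2m+2}$ because these elements live in pairwise distinct bigraded pieces $\fraku_{-2(2m+1),-2m-2}$, which are one-dimensional by Proposition \ref{prop1-6}; hence $\mathfrak r \subset \bigoplus_{d\geq 2}\mathfrak f_d$ and $[\mathfrak f,\mathfrak r]$ is concentrated in depth $\geq 3$. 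By Hopf's formula $H_2(\mathfrak u_+^{\mathrm g}) \cong \mathfrak r/[\mathfrak f,\mathfrak r]$, the depth-two component of $H_2(\mathfrak u_+^{\mathrm g})$ is canonically identified with $\mathfrak r \cap \bigwedge^2\mathfrak f_1$, i.e.\ the space of quadratic relations among the $\breve{\boldsymbol e}_{2m+2}$.

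Next, I would observe, via the Chevalley--Eilenberg complex, that the map $H_2(\mathfrak u_+^{\mathrm g}) \to \bigwedge^2 H_1(\mathfrak u_+^{\mathrm g})$ dual to the cup product is induced by applying the abelianization $\mathfrak u_+^{\mathrm g}\to H_1(\mathfrak u_+^{\mathrm g})$ in each factor of $\bigwedge^2 \mathfrak u_+^{\mathrm g}$. Since $H_1(\mathfrak u_+^{\mathrm g})$ is concentrated in depth one, the target $\bigwedge^2 H_1(\mathfrak u_+^{\mathrm g})$ sits in depth two; hence the map vanishes outside the depth-two summand of $H_2$, and on that summand it is the tautological inclusion sending $\sum a_{ij} x_{2i+2}\wedge x_{2j+2}$ to $\sum a_{ij}\breve{\boldsymbol e}_{2i+2}\wedge \breve{\boldsymbol e}_{2j+2}$. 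Combining the two steps, the image of $H_2\to \bigwedge^2 H_1$ corresponds, under the polynomial identification in the statement, to exactly those $\sum a_{ij}(x^{2i}y^{2j}-y^{2i}x^{2j})$ for which $\sum a_{ij}[\breve{\boldsymbol e}_{2i+2},\breve{\boldsymbol e}_{2j+2}]=0$ in $\mathfrak u_+^{\mathrm g}$. By Corollary \ref{cor3-3}, this subspace is precisely $\bigoplus_{m\geq 1} S_{2m+2}\otimes_{\mathbf Q}\mathbf Q_p$, which, using $S_k = 0$ for $k<12$, coincides after re-indexing with $\bigoplus_{m\geq 6}S_{2m}\otimes_{\mathbf Q}\mathbf Q_p$. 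The only delicate point is the depth-grading argument ruling out contributions from higher-depth parts of $H_2$; once that is in place, the corollary is a direct reformulation of Corollary \ref{cor3-3}.
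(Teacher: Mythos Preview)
Your proposal is correct and follows the approach the paper implicitly intends: the paper states Corollary~\ref{cor3-4} immediately after Corollary~\ref{cor3-3} without any proof, treating it as a direct consequence. You have spelled out precisely why it is one---namely, that for a graded Lie algebra generated in degree (depth) one, the image of the map $\mathrm H_2\to\bigwedge^2\mathrm H_1$ dual to the cup product is exactly the space of quadratic relations among the generators, and Corollary~\ref{cor3-3} identifies those relations with restricted even period polynomials. The depth grading you invoke is exactly the one transported from $\mathfrak d^{\mathrm{odd}}$ via Proposition~\ref{keyprop}, and your observation that the generators $\breve{\boldsymbol e}_{2m+2}$ lie in pairwise distinct bigraded pieces (so there are no linear relations) is the right way to justify $\mathfrak r\subset\Gamma^2\mathfrak f$.
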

Note that we have a natural isomorphism
\[
\rmH^1(\fraku_+^{\mathrm g})=\bigoplus_{i\geq 2}\bfQ_p\breve{\be_{2i}}\cong \bigoplus_{i\geq 2}\rmE_{2i}^\vee(2i+1).
\]
(see Proposition \ref{prop1-5}). Therefore, by taking the dual of the homomorphism in Corollary \ref{cor3-4}, we have a natural surjective homomorphism
\begin{equation}
	\label{eq26}
\bigoplus_{i+j=2m}\rmE_{2i}(-2i-1)\otimes_{\bfQ_p}\rmE_{2j}(-2j-1)\ontomap S_{2m+2,\bfQ_p}^\vee
\end{equation}
where $S_{2m+2,\bfQ_p}:=S_{2m+2}\otimes_{\bfQ}\bfQ_p$ and $S_{2m+2,\bfQ_p}^\vee$ denotes the $\bfQ_p$-dual of $S_{2m+2,\bfQ_p}$.
\subsection{Non-vanishing of cup products of Eisenstein classes}
Let us show non-vanishing of cup products of $p$-adic \'etale Eisenstein classes.

According to \cite[Theorem 5.3]{HM04}, we have a natural isomorphism
\begin{equation}
	\label{eq17}
	\mathrm H_i({\fraku}_{*,\bullet})=\bigoplus_{n\geq 0,r\in \mathbf Z}{\Ext}^i_{{\ME}_p(\scrM_{1,1})}(\mathbf Q_p,\mathcal V^{n}(r))^\vee\otimes_{\mathbf Q_p}V^n(r).
\end{equation}
Then, for each $n\geq 0$ and $r\in \mathbf Z$, we define a homomorphism
\[
\alpha_{n,r}\colon {\Ext}^2_{{\ME}_p(\scrM_{1,1})}(\mathbf Q_p,\mathcal V^{n}(r))(-r)\to \mathrm H_2({\fraku}_{+}^{\mathrm g})^\vee
\]
to be the composition
\begin{multline} {\Ext}^2_{{\ME}_p(\scrM_{1,1})}(\mathbf Q_p,\mathcal V_{n}(r))(-r)
	\hookrightarrow{\Ext}^2_{{\ME}_p(\scrM_{1,1})}(\mathbf Q_p,\mathcal V_{n}(r))\otimes_{\mathbf Q_p}\left(V_n(r)\right)^\vee\\
	\hookrightarrow \mathrm H_2({\fraku}_{*,\bullet})^\vee\to \mathrm H_2({\fraku}_+^{\mathrm g})^\vee.
\end{multline}
Here, the first inclusion is the induced homomorphism by the inclusion
\[
\bfQ_p(-r)\intomap  \left(V_n(r)\right)^\vee.
\]
\begin{theorem}
	\label{thm3-5} We have the following commutative diagram$:$
{\small	\[
	\xymatrix{
		\displaystyle{\bigoplus_{i+j=m}}\mathrm{E}_{2i+2}(-2i-1)\otimes_{\mathbf Q_p}\mathrm{E}_{2j+2}(-2j-1)\ar[rr]^{\cup}\ar@{->>}[d]& & {\Ext}^2_{{\ME}_p(\scrM_{1,1})}(\mathbf Q_p,\mathcal V^{2m}(2m+2))(-2m-2)\ar[d]^{\alpha_{2m,2m+2}}\\
		S_{2m+2,\bfQ_p}^\vee\ar@{^(->}[rr]& &\mathrm H_2({\fraku}_+^{\mathrm g})^\vee.
	}
	\]}
	Here, the left vertical homomorphism is the same as the one in (\ref{eq26}).
\end{theorem}
\begin{proof}
	The inclusion $\mathfrak u_+^{\mathrm g}\subset {\fraku}_{*,\bullet}$ induces the
	homomorphisms of homology groups. Then, we have the following commutative diagram:
	\[
	\xymatrix{
		\mathrm H_2({\fraku}_+^{\mathrm g})\ar[d]\ar[rr]& & \bigwedge^2\mathrm H_1({\fraku}_+^{\mathrm g})\ar@{^(->}[d]\\
		\mathrm H_2({\fraku}_{*,\bullet})\ar[rr]& &\bigwedge^2\mathrm H_1({\fraku}_{*,\bullet}).
	}
	\]
	Note that the images of the horizontal homomorphisms are annihilated by
	$\mathrm{ad}(\boldsymbol e_0)$ because ${\fraku}_+^{\mathrm g}$ is annihilated by $\mathrm{ad}(\boldsymbol e_0)$.
	Thus, for any positive integers $i,j $ such that $m=i+j$, we have the commutative diagram
	{\small\[
	\xymatrix{
		\mathrm H_2({\fraku}_+^{\mathrm g})\ar[d]_{{\alpha_{2m,2m+2}}^\vee}\ar[rr]& & \mathrm{E}^\vee_{2i+2}(2i+1)\otimes_{\mathbf Q_p}\mathrm{E}^\vee_{2j+2}(2j+1)\ar[d]^{\cong}\\
		{\Ext}^2_{{\ME}_p(\scrM_{1,1})}(\mathbf Q_p,\mathcal V^{2m}(2m+2))^\vee(2m+2)\ar[rr]& & \mathrm{E}^\vee_{2i+2}\otimes_{\mathbf Q_p}\mathrm{E}^\vee_{2j+2}\otimes_{\mathbf Q_p} V_{2i}^{\boldsymbol e_0=0}\otimes V_{2j}^{\boldsymbol e_0=0}(2m+2)
	}
	\]}by the natural isomorphism (\ref{eq17}).
	Therefore, by taking the dual of the diagram above and by taking their sum, we have the following commutative diagram:
	{\small\[
	\xymatrix{
		{\displaystyle\bigoplus_{i+j=m}}\mathrm E_{2i+2}(-2i-1)\otimes_{\mathbf Q_p}\mathrm E_{2j+2}(-2j-1)\ar[rr]^{\cup}\ar[d]^{=}& &{\Ext}^2_{{\ME}_p(\scrM_{1,1})}(\mathbf Q_p,\mathcal V^{2m}(2m+2))(-2m-2)\ar[d]^{\alpha_{2m,2m+2}}\\
			{\displaystyle\bigoplus_{i+j=m}}\mathrm E_{2i+2}(-2i-1)\otimes_{\mathbf Q_p}\mathrm E_{2j+2}(-2j-1)\ar[rr]^{\cup}& &\mathrm H_2({\fraku}_+^{\mathrm g})^\vee.
	}
	\]
}Thus, we have the conclusion of the theorem by the diagram above and
	Corollary \ref{cor3-4}.
\end{proof}
\begin{corollary}
	\label{cor3-6}Under Hypothesis \ref{hyp2-2}, we have an inequality that
	\[
	\dim_{\mathbf Q_p}\left(\sum_{i+j=m}\mathbf Q_p\varphi_{2i+2}\cup\varphi_{2j+2}\right)\geq \dim_{\mathbf Q}S_{2m+2}.
	\]
\end{corollary}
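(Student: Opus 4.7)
The plan is to read the stated inequality directly off the commutative square provided by Theorem \ref{thm3-5}; once that diagram is in hand, the corollary is a short diagram chase.

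First I would identify the image of the cup product map. Since $\mathrm{E}_{2i+2}$ is one-dimensional over $\mathbf Q_p$ with basis $\varphi_{2i+2}$, the image of
$$\cup\colon \bigoplus_{i+j=m}\mathrm{E}_{2i+2}\otimes_{\mathbf Q_p}\mathrm{E}_{2j+2}\longrightarrow {\Ext}^2_{{\ME}_p(\scrM_{1,1})}(\mathbf Q_p,\mathcal V^{2m}(2m+2))$$
is precisely the subspace $\sum_{i+j=m}\mathbf Q_p\,\varphi_{2i+2}\cup\varphi_{2j+2}$ whose dimension we want to bound below.

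Next I would chase the composition $\alpha_{2m,2m+2}\circ \cup$ around the diagram. By the commutativity asserted in Theorem \ref{thm3-5}, this composition coincides with the composition of the surjection $\bigoplus_{i+j=m}\mathrm{E}_{2i+2}\otimes_{\mathbf Q_p}\mathrm{E}_{2j+2}\twoheadrightarrow S_{2m+2}^*\otimes_{\mathbf Q}\mathbf Q_p$ with the injection $S_{2m+2}^*\otimes_{\mathbf Q}\mathbf Q_p\hookrightarrow \mathrm H_2(\fraku_+^{\mathrm g})^\vee$, so its image has $\mathbf Q_p$-dimension exactly $\dim_{\mathbf Q}S_{2m+2}$.

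Finally, since $\alpha_{2m,2m+2}\circ \cup$ factors through $\cup$, its image is contained in (indeed equal to) the image of $\alpha_{2m,2m+2}$ restricted to $\mathrm{Im}(\cup)$, hence
$$\dim_{\mathbf Q_p}\mathrm{Im}(\cup)\geq \dim_{\mathbf Q_p}\mathrm{Im}(\alpha_{2m,2m+2}\circ\cup)=\dim_{\mathbf Q}S_{2m+2},$$
which is the desired estimate. There is no substantive obstacle remaining at this stage; all the real work has been absorbed into Corollary \ref{cor3-4} (which invokes Hypothesis \ref{hyp2-2} through Corollary \ref{cor3-3} and Proposition \ref{keyprop}) and into the construction of the commutative diagram in Theorem \ref{thm3-5}.
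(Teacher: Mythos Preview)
Your argument is correct and is exactly the intended deduction: the paper states Corollary~\ref{cor3-6} with no proof, treating it as an immediate consequence of the commutative square in Theorem~\ref{thm3-5}, and your diagram chase (the image of $\alpha_{2m,2m+2}\circ\cup$ has dimension $\dim_{\mathbf Q}S_{2m+2}$, hence $\dim\mathrm{Im}(\cup)\geq\dim_{\mathbf Q}S_{2m+2}$) is precisely how one unpacks that. Your remark that Hypothesis~\ref{hyp2-2} enters only through Corollary~\ref{cor3-4} in the construction of the diagram is also accurate.
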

\begin{proof}
	This is a direct consequence of Theorem \ref{thm3-5}.
\end{proof}
By Drinfeld-Manin's theorem, we have a decomposition
\[
\mathrm H^1_{\et}\left({\scrM_{1,1}}/\overline{\mathbf Q},\mathcal V^{2m}\right)=\mathrm E_{2m+2}(-2m-1)\bigoplus\mathrm H^1_{\mathrm{cusp}}\left({\scrM_{1,1}}/\overline{\mathbf Q},\mathcal V^{2m}(2m+2)\right)
\]
of $\bfQ_p$-vector spaces stable under the action of $\scrG_{\bfQ}$,
where $\mathrm H^1_{\mathrm{cusp}}\left({\scrM_{1,1}}/\overline{\mathbf Q},\mathcal V^{2m}(2m+2)\right)$
is the subrepresentation of $\mathrm H^1_{\et}\left({\scrM_{1,1}}/\overline{\mathbf Q},\mathcal V^{2m}(2m+2)\right)$
corresponding to cuspforms.
According to \cite[Theorem 13.3]{HM15},
the natural homomorphism
\begin{equation}
	{\Ext}^2_{{\ME}_p(\scrM_{1,1})}(\mathbf Q_p,\mathcal V^{2m}(2m+2))\to \mathrm H^1\left(\mathbf Q,\mathrm H^1\left({\scrM_{1,1}}/\overline{\mathbf Q},\mathcal V^{2m}(2m+2)\right)\right)
\end{equation}
is injective and its image is contained in the Bloch-Kato finite part
\[
\mathrm H^1_{\mathrm f}\left(\mathbf Q,\mathrm H^1\left({\scrM_{1,1}}/\overline{\mathbf Q},\mathcal V^{2m}(2m+2)\right)\right)=\mathrm H^1_{\mathrm f}\left(\mathbf Q,\mathrm H^1_{\mathrm{cusp}}\left({\scrM_{1,1}}/\overline{\mathbf Q},\mathcal V^{2m}(2m+2)\right)\right).
\]
Therefore, the $\bfQ_p$-subspace of ${\Ext}^2_{{\ME}_p(\scrM_{1,1})}(\mathbf Q_p,\mathcal V^{2m}(2m+2))$ spanned by elements $\{\varphi_{2i+2}\cup\varphi_{2j+2}|\  i+j=m\}$ can be regarded as a subspace of $\mathrm H^1_{\mathrm f}\left(\mathbf Q,\mathrm H^1_{\mathrm{cusp}}\left({\scrM_{1,1}}/\overline{\mathbf Q},\mathcal V^{2m}(2m+2)\right)\right)$ naturally.
\section{The Bloch-Kato conjecture for Galois representation associated with full-level modular forms}
In this section, under Hypothesis \ref{hyp2-2}, we give a proof of the Bloch-Kato conjecture
 for full-level Hecke eigen cuspforms.
\subsection{Kato Euler system}
Here, we recall the construction of a Kato Euler system briefly.
For simplification, we restricts ourselves to the full-level case.

Let $N$ be a positive integer which is divided by $p$.
Let $\calK$ be a finite extension of $\mathbf Q_p$ and let $T$ be a free $\mathcal O_{\calK}$-module equipped with a continuous action of $\mathscr G_{\mathbf Q}$
unramified outside $N$. Let us define the set $\Xi(N)$ of positive integers by
\[
\Xi(N):=\{n\in \mathbf Z_{\geq 1}\ |\ \mathrm{prime}(n)\cap\mathrm{prime}(N)=\{p\}\},
\]
where $\mathrm{prime}(n)$ denotes the set of prime divisors of $n$.
For any prime number $\ell$ which does not divide $N$,
let $P_\ell(t)$ denote the polynomial $\det(1-\sigma_\ell t\ |\ T)$,
where $\sigma_\ell$ is the arithmetic Frobenius at $\ell$.
Then, by an \emph{Euler system for $(T,N)$},
we mean a system of elements
\[
z_n\in \mathrm H^1\left(\mathbf Z\left[\mu_n,\frac{1}{p}\right],T\right),\quad n\in \Xi(N)
\]
satisfying the following norm relation:
Let $n,n'$ be elements of $\Xi(N)$ such that $n|n'$.
Then, we have
\[
\mathrm{Cor}_{\mathbf Q(\mu_{n'})/\mathbf Q(\mu_n)}(z_{n'})=\prod_{\ell|n',\ \ell\nmid n}P_\ell(\ell^{-1}\sigma_\ell^{-1})z_n
\]
(\cite[13.1]{Kat04}).
For a given Euler system $\{z_n\}_{n\in\Xi(N)}$, we
define $z_1$ by
\[
z_1=\mathrm{Cor}_{\mathbf Q(\mu_{p})/\mathbf Q}(z_{p}).
\]
By using a non-torsion Euler system, we can give
an ``upper bound'' on the second Iwasawa cohomology (\cite[Theorem 13.4]{Kat04}).
In this paper, we only use the following finiteness theorem:
\begin{theorem}[{\cite[Theorem 8.1]{KK99}}]Let $T$ be as above.
	We assume that the following two conditions are satisfied:
	\begin{itemize}
		\item The $\mathscr G_{\mathbf Q}$-module $T\otimes_{\mathbf Z}\mathbf Q$ is irreducible
		and pure.
		\item There exists an element $\sigma\in\mathscr G_{\mathbf Q^{\mathrm{ab}}}$
		such that $T/(1-\sigma)T$ is a rank one $\calO_{\calK}$-module.
	\end{itemize}
	Then, if the element $z_1$
	is \emph{not} a torsion element of $\mathrm H^1(\mathbf Z[1/p],T)$,
	then the second Galois cohomology group $\mathrm H^2(\mathbf Z[1/N],T)$
	is finite.
	\label{thm4-1}
\end{theorem}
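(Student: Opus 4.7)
The plan is to apply the Kolyvagin derivative method to the given Euler system $\{z_n\}_{n \in \Xi}$ in order to produce a family of derived cohomology classes with controlled local behavior. These derived classes will bound the dual Selmer group attached to $T^\vee(1)$, which via global Poitou--Tate duality is equivalent to the finiteness of $\mathrm{H}^2(\mathbf{Z}[1/p], T)$.

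First I would reduce modulo $p^M$ for large $M$ and work with classes in $\mathrm{H}^1(\mathbf{Z}[\mu_n, 1/p], T/p^M T)$. Using the irreducibility and purity of $T \otimes \mathbf{Q}$, together with the existence of $\sigma \in \scrG_{\mathbf{Q}^{\mathrm{ab}}}$ with $T/(1-\sigma)T$ free of rank one, one applies the Chebotarev density theorem to produce an infinite set $\mathcal{L}_M$ of auxiliary primes $\ell$ whose arithmetic Frobenius on $T/p^M T$ acts as $\sigma$ does. For such $\ell$ the quotient $T/(1-\sigma_\ell)T$ is cyclic modulo $p^M$, and the Euler factor $P_\ell(\ell^{-1}\sigma_\ell^{-1})$ acts invertibly on the relevant coinvariants, which is the algebraic input required for the derivative construction.

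Next, for each squarefree product $n = \ell_1 \cdots \ell_r$ of primes in $\mathcal{L}_M$, the Kolyvagin derivative operator $D_n = \prod_i D_{\ell_i}$ applied to $z_n$ descends via the norm relations to a well-defined class $\kappa_n \in \mathrm{H}^1(\mathbf{Z}[1/p], T/p^M T)$. These classes are unramified outside $np$, and at each $\ell \mid n$ their restriction lies in a one-dimensional transverse subspace of $\mathrm{H}^1(\mathbf{Q}_\ell, T/p^M T)$. Thus $\{\kappa_n\}$ forms a Kolyvagin system seeded by $\kappa_1 = z_1 \bmod p^M$; since $z_1$ is non-torsion, $M$ can be chosen so that $\kappa_1 \neq 0$. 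An induction on the number of prime factors of $n$, combining local duality at each $\ell \in \mathcal{L}_M$ with the non-triviality of $\kappa_n$, yields an upper bound on the length of the strict Selmer group $\mathrm{H}^1_{\mathrm{str}}(\mathbf{Q}, T^\vee(1)[p^M])$. By the Poitou--Tate nine-term exact sequence this strict Selmer group controls the torsion of $\mathrm{H}^2(\mathbf{Z}[1/p], T)/p^M$, and passing to the limit over $M$ gives finiteness of $\mathrm{H}^2(\mathbf{Z}[1/p], T)$.

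The main obstacle is verifying that the Chebotarev step yields enough primes to close the induction. One needs the image of $\scrG_\mathbf{Q}$ in $\mathrm{Aut}(T/p^M T)$ to be large enough that a positive density of Frobenius elements lift $\sigma$ with cyclic rank-one cokernel modulo $p^M$; this is exactly where irreducibility of $T \otimes \mathbf{Q}$ (which rigidifies the Galois image up to finite index) and the abelian hypothesis on $\sigma$ (which guarantees compatibility with the cyclotomic tower) come together. Purity is used to force the Weil-number side conditions on the local Euler factors so that $P_\ell(\ell^{-1}\sigma_\ell^{-1})$ acts as a unit after reduction, which is what allows the derivative classes to be lifted consistently through the inductive step.
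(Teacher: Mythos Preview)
The paper does not give its own proof of this statement: Theorem~\ref{thm4-1} is quoted verbatim from \cite[Theorem~8.1]{KK99} and used as a black box. So there is no in-paper argument to compare against; your sketch is effectively a summary of the Euler-system/Kolyvagin-derivative machinery that Kato develops in \cite{KK99}, and at that level of resolution it is broadly correct: build derivative classes $\kappa_n$ from the norm relations, use Chebotarev and the rank-one-cokernel element $\sigma$ to supply enough auxiliary primes, and bound the dual Selmer group (hence $\mathrm H^2(\mathbf Z[1/p],T)$) by global duality.

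Two places where your attribution of hypotheses is slightly off. First, purity is not primarily there to make $P_\ell(\ell^{-1}\sigma_\ell^{-1})$ a unit; its main role is to force $\mathrm H^0$ of $V$ and $V^\vee(1)$ (and their restrictions along the cyclotomic tower) to vanish, since Frobenius eigenvalues of a pure representation cannot equal $1$. This is what makes the passage between integral and rational statements clean and lets ``$z_1$ non-torsion'' propagate to nonvanishing of $\kappa_1$ modulo $p^M$. Second, irreducibility of $V$ is used less to ``rigidify the Galois image'' than to rule out proper Galois-stable sublattices on which the Euler system could collapse, and to guarantee that the various $\mathrm H^0$ and $\mathrm H^0$-dual terms in the Poitou--Tate sequence vanish. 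The genuine ``large image'' input in the Chebotarev step is exactly the hypothesis on $\sigma\in\mathscr G_{\mathbf Q^{\mathrm{ab}}}$, which already lives over the cyclotomic tower and needs no further enlargement. With these adjustments your outline matches Kato's argument in \cite{KK99}.
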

\begin{remark}
In the paper \cite{KK99}, Kato defined an Euler system as a system of the groups
$\mathrm H^1\left(\mathbf Z\left[\mu_n,1/N\right],T\right)$ satisfying the same norm relations as above.
However, those elements in a system are automatically contained in
$ \mathrm H^1\left(\mathbf Z\left[\mu_n,1/p\right],T\right)$.
See \cite[Lemma 8.5]{Kat04}.
\end{remark}
Let us recall Kato's construction of an Euler system for the case
\[
T:=\mathrm H^1_{\mathrm{et}}\left({\scrM_{1,1}}/\overline{\mathbf Q},\calV^{2m}_{\mathbf Z_p}(2m+2)\right),
\]
where $ \calV^{2m}_{\mathbf Z_p}$ is defined by
\[
\calV^{2m}_{\mathbf Z_p}=\mathrm{Sym}^{2m}\rmR^1\pi_*\mathbf Z_p.
\]
See \cite[Section 1, Section 2, Section 8]{Kat04}
for the construction of Euler systems for general level.
For a positive integer $n$, let $Y(n)$ denote
the moduli stack over $\bfQ$ of elliptic curves with $\Gamma(n)$-level structure (\cite[(3.1)]{KM84}).
It is well known that $Y(n)$ is an actual scheme if $n$ is greater than two.
We take an auxiliary positive integers $c,d$ which are coprime to $6p$.
For $n\in \Xi(6pcd)$,
let $\kappa({_c}g_{1/np^\infty,0})$ and $\kappa({_d}g_{0,1/np^\infty})$
be elements of $\varprojlim_{r}\mathrm{H}^1_{\mathrm{et}}(Y(np^r),\mathbf Z/p^r\mathbf Z(1))$
which are defined to be the norm limit of Siegel units
${_c}g_{1/np^r,0},\ {_d}g_{0,1/np^r}\in \mathcal O\left(Y(np^r)\right)^\times$
(\cite[1.2]{Kat04}).
Let
$i,j$ be positive integers such that $m=i+j$.
Then, we define an element
\[
_{c,d}z_n(i,j)\in \mathrm H^1(\mathbf Q(\mu_n),T)
\]
to be the image of $\kappa({_c}g_{1/np^\infty,0})\cup\kappa({_d}g_{0,1/np^\infty})$
under the following composition of homomorphisms (\cite[(8.4.3)]{Kat04}):
\begin{multline*}
	\varprojlim_r\mathrm H^2_{\et}(Y(np^r),\mathbf Z/p^r\mathbf Z(2))\xrightarrow{\otimes e_{1,np^r}^{2i}e_{1,np^r}^{2j}}\varprojlim_r\mathrm H^2_{\et}(Y(np^r),\calV^{2m}_{\mathbf Z/p^r\mathbf Z_p}(2m+2))\\
	\longrightarrow \mathrm H^1(\mathbf Q,\mathrm H^1_{\et}(Y(n)/\overline{\mathbf Q},\calV^{2m}_{\mathbf Z_p}(2m+2)))
	\to \mathrm H^1(\mathbf Q(\mu_n),T).
\end{multline*}
Here, $e_{1,np^r},e_{2,np^r}$ are universal torsion points of the universal elliptic
curve over $Y(np^r)$.
We also define $z_n(i,j)\in\mathbf H^1(\mathbf Q(\mu_n),T\otimes_{\mathbf Z}\mathbf Q)$
by
\[
z_n(i,j)=(c^2-c^{-2i})(d^2-d^{-2j}){_{c,d}z_n(i,j)}.
\]
\begin{theorem}[{\cite[Proposition 8.10]{Kat04}}]
	The set $\{{_{c,d}z_n(i,j)}\}_{n\in \Xi(6pcd)}$ forms an Euler system for $(T,6pcd)$.
	\label{thm4-3}
\end{theorem}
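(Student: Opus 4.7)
The plan is to verify the norm relation
\[
\mathrm{Cor}_{\mathbf Q(\mu_{n'})/\mathbf Q(\mu_n)}\bigl({_{c,d}z_{n'}(i,j)}\bigr) \;=\; \prod_{\ell\mid n',\ \ell\nmid n} P_\ell(\ell^{-1}\sigma_\ell^{-1})\cdot {_{c,d}z_n(i,j)}
\]
directly; by transitivity of corestriction I would reduce immediately to the one-step case $n'=\ell n$ for a prime $\ell$ not dividing $ncdp$. Since ${_{c,d}z_n(i,j)}$ is built from a cup product in $\varprojlim_r\mathrm H^2_{\et}(\scrM_{1,1}(np^r)/\mathbf Q,\mathbf Z/p^r(2))$ paired with the section $e_{1,np^r}^{2i}e_{2,np^r}^{2j}$ of $\calV^{2m}_{\mathbf Z/p^r}(2m)$, I would work at finite level on the diagram of degeneracy maps $\scrM_{1,1}(\ell n p^r)\rightrightarrows \scrM_{1,1}(np^r)$, verify the identity there modulo $p^r$, and pass to the inverse limit in $r$ at the end. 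The Galois change of field is encoded by identifying the covering $\scrM_{1,1}(np^r)_{\mathbf Q(\mu_n)}\to \scrM_{1,1}(np^r)_{\mathbf Q}$ with the $\mu_n$-quotient on Weil pairings, so the Galois corestriction on the left-hand side lifts to the geometric pushforward on the tower.

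The main computational input is the distribution relation for Kato's Siegel units: the pushforward of ${_c}g_{1/\ell n p^r,0}$ and ${_d}g_{0,1/\ell n p^r}$ along the natural projection recovers ${_c}g_{1/n p^r,0}$ and ${_d}g_{0,1/n p^r}$ up to a product indexed by the $\ell$-torsion, that is, up to the image of a Hecke-type correspondence at $\ell$. After applying the Kummer map into $\mathrm H^1_{\et}(\scrM_{1,1}(np^r),\mathbf Z/p^r(1))$, cupping and tensoring with the appropriate power of $e_{1,np^r}, e_{2,np^r}$, the distribution relation becomes the cohomological identity
\[
\mathrm{Cor}\bigl({_{c,d}z_{\ell n}(i,j)}\bigr) \;=\; \widetilde T_{\ell,n}\cdot {_{c,d}z_n(i,j)}
\]
for a cohomological operator $\widetilde T_{\ell,n}$ assembled from the two degeneracy maps. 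The Eichler--Shimura relation then identifies $\widetilde T_{\ell,n}$, acting on the unramified-at-$\ell$ Galois representation $T$, with the polynomial $P_\ell(\ell^{-1}\sigma_\ell^{-1})$ in Frobenius; the smoothing factors $c^2-c^{-2i}$ and $d^2-d^{-2j}$ are not present in ${_{c,d}z_n(i,j)}$ and so play no role at this step.

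The hard part is the precise bookkeeping in the second paragraph: matching the geometric pushforward of the Siegel-unit cup product under the double $\ell$-degeneracy with the right Hecke operator on $\mathrm H^1_{\et}(\scrM_{1,1}(n)/\overline{\mathbf Q},\calV^{2m}_{\mathbf Z_p})$, tracking correctly the Tate twist $(2m+2)$ and the $\sigma_\ell^{-1}$ factor arising from the action on $\ell$-power torsion, and reorganizing the Eichler--Shimura identity into the symmetric form $\det(1-\sigma_\ell t\mid T)$ evaluated at $t=\ell^{-1}\sigma_\ell^{-1}$. All of these steps are carried out, in greater generality and with a careful bookkeeping of $c,d$-smoothings, in \cite[\S 2.3 and \S 8]{Kat04}, whose argument I would follow verbatim.
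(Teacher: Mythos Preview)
The paper does not give its own proof of this statement: it is stated as a theorem with an attribution to \cite[Proposition 8.10]{Kat04} and no further argument. Your proposal is a correct high-level sketch of Kato's proof (distribution relations for Siegel units, pushforward along degeneracy maps, Eichler--Shimura to identify the Hecke correspondence with $P_\ell(\ell^{-1}\sigma_\ell^{-1})$), and you explicitly defer the bookkeeping to \cite[\S 2.3 and \S 8]{Kat04}; this is exactly what the paper does, so your treatment and the paper's coincide.
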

\begin{proof}
Let $I_2$ denote the two-by-two identity matrix. 
Then, by construction, the element $_{c,d}z_n(i,j)$ coincides with
$_{c,d}z_{1,1,n}(2m+2,0,2i-1,I_2,\mathrm{Prime}(n))$ which is defined in \cite[Section 8.9]{Kat04}.
Then, the assertion of the theorem is a direct consequence of \cite[Proposition 8.10]{Kat04}.
\end{proof}
Similar as before, we define ${_{c,d}z_1(i,j)}$ and ${z_1(i,j)}$
to be corestriction of ${_{c,d}z_p(i,j)}$ and ${z_p(i,j)}$
to $\mathbf Q$, respectively.
As a direct consequence of the theorem above and the Euler system argument,
we have the following corollary:
\begin{corollary}\label{cor4-4}
	Let $f$ be a full-level Hecke eigen cuspform of weight $2m+2$
	and let $e_f$ denote the idempotent of the Hecke algebra
	corresponding to $f$.
	If the element
	\[
	e_fz_1(i,j)\in \mathrm H^1\left(\mathbf Z\left[\frac{1}{p}\right],V_f(2m+2)\right)
	\]
	is non-zero, then we have
	\[
	\mathrm H^2\left(\mathbf Z\left[\frac{1}{p}\right],V_f(2m+2)\right)=0.
	\]
\end{corollary}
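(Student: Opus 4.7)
The plan is to produce an Euler system for $V_f(2m+2)$ by applying the Hecke idempotent $e_f$ to Kato's Euler system $\{{}_{c,d}z_n(i,j)\}_{n\in\Xi}$ of Theorem \ref{thm4-3}, and then to invoke the finiteness criterion of Theorem \ref{thm4-1}. First I fix a Galois-stable $\mathcal O_E$-lattice $T_f\subset V_f(2m+2)$. Since the construction of the classes ${}_{c,d}z_n(i,j)$ commutes with the Hecke action on $\rmH^1_{\et}(\scrM_{1,1}/\overline{\mathbf Q},\calV^{2m}_{\mathbf Z_p}(2m+2))$, the projector $e_f$ carries the system to a compatible family whose norm relations are governed by the local Euler factors of $V_f(2m+2)$. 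After clearing a bounded denominator, this gives an Euler system $\{z_n^{(f)}\}_{n\in\Xi}$ for $(T_f,cd)$ in the sense of Kato, and $z_1^{(f)}$ is a non-zero scalar multiple of $e_fz_1(i,j)$. By hypothesis $e_fz_1(i,j)\neq 0$, so $z_1^{(f)}$ is non-torsion in $\rmH^1(\mathbf Z[1/p],T_f)$.

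Next I verify the two hypotheses of Theorem \ref{thm4-1} for $T_f$. Purity is Deligne's theorem, and irreducibility is due to Ribet; a full-level cuspform is automatically non-CM, since the nebentypus of a CM newform must be the non-trivial quadratic character of its CM field, whereas forms for $\SL_2(\mathbf Z)$ have trivial nebentypus. For the second condition, I invoke the big image theorem of Ribet--Momose: for a non-CM cuspidal newform, the image of $\rho_f\colon\scrG_{\mathbf Q}\to\GL(V_f)$ contains an open subgroup of $\SL_2(\mathcal O_E)$ after enlarging $E$ if necessary. The commutator subgroup of an open subgroup of $\GL_2(\mathcal O_E)$ is open in $\SL_2(\mathcal O_E)$, so it contains non-trivial unipotent elements; hence I may choose $\sigma\in\scrG_{\mathbf Q^{\mathrm{ab}}}$ with $\rho_f(\sigma)$ a non-trivial unipotent matrix. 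Since $\chic$ is trivial on $\scrG_{\mathbf Q^{\mathrm{ab}}}$, the action of $\sigma$ on $T_f(2m+2)$ equals that on $T_f$, so $T_f(2m+2)/(1-\sigma)T_f(2m+2)$ is of rank one over $\mathcal O_E$, as required.

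Theorem \ref{thm4-1} then yields that $\rmH^2(\mathbf Z[1/p],T_f)$ is a finite abelian group, and tensoring with $E$ over $\mathcal O_E$---using that continuous Galois cohomology of $\scrG_{\mathbf Z[1/p]}$, which has finite $p$-cohomological dimension, commutes with inverting $p$ on finitely generated coefficients---gives the desired vanishing $\rmH^2(\mathbf Z[1/p],V_f(2m+2))=0$. The main subtlety to verify carefully is the compatibility of Kato's construction with $e_f$, so that the norm relations transport to the Euler factors of $V_f$, together with the existence of a uniform denominator allowing rescaling into $T_f$ while preserving the non-triviality of $z_1^{(f)}$. A secondary concern is the handful of small primes $p$ for which the image of $\rho_f$ may not be sufficiently large; for these, one may need to replace $\sigma$ by a suitable power or to pass to a different Galois-stable lattice so that $T_f/(1-\sigma)T_f$ is of rank one, but this does not affect the conclusion.
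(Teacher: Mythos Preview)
Your proposal is correct and follows essentially the same approach as the paper's proof: verify the two hypotheses of Theorem~\ref{thm4-1} via Deligne's purity, Ribet's irreducibility, the observation that full-level forms are non-CM, and Ribet's big image theorem, then apply Theorem~\ref{thm4-1}. Your write-up is considerably more detailed than the paper's---in particular you spell out the transport of the Euler system under $e_f$, the explicit choice of a unipotent $\sigma$, and the passage from $T_f$ to $V_f$---while the paper dispatches all of this in three sentences, but the underlying argument is the same.
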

\begin{proof}The first condition of Theorem \ref{thm4-1}
	is a direct consequence of the Weil conjecture proved by Deligne.
	Since the level of $f$ is full, this modular form has no CM.
	Therefore, by the big Galois image theorem by Ribet,
	the second condition in Theorem \ref{thm4-1} for any lattice of $V_f$
	is satisfied.
	Thus, by Theorem \ref{thm4-1}, we have that $\rmH^2(\bfZ[1/6cdp],V_f(2m+2))=0$
	for any $c,d,$ which are coprime to $6p$.
	By the localizing exact sequence, we have the following exact sequence:
	\[
\bigoplus_{\ell\mid 6cd,\ \ell\nmid p}\rmH^1(\bfF_\ell,V_f(-1))\to 	\rmH^2(\bfZ[1/p],V_f(2m+2))\to\rmH^2(\bfZ[1/6cdp],V_f(2m+2))=0.
	\]
	Then, the first cohomology groups vanish because the homomorphism $\mathrm{Fr}_\ell-1$ on $V_f(-1)$ is an isomorphism by the Weil conjecture. Therefore, we have the conclusion of the corollary.
\end{proof}
\subsection{Non-vanishing of Euler systems and proof of the main theorem}
In this section, we give a proof of our main result
assuming relation between specializations of Kato Euler systems
and cup products of Eisenstein classes.
\begin{theorem}
	\label{thm4-5}Let $i,j$ be positive integers such that $m=i+j$.
	Then, we have
	\[
	\left(1-T_p+p^{2m+1}\right)\left(\varphi_{2i+2}\cup\varphi_{2j+2}\right)=z_1(i,j),
	\]
	where $T_p$ is the $p$th Hecke operator.
\end{theorem}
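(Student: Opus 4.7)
The plan is to realize both sides as images of a single cup product of Siegel units on the modular tower and to identify the discrepancy between them with the local Euler factor at $p$ of a weight-$(2m+2)$ level-$1$ eigenform.

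First, by Beilinson's Eisenstein symbol, each Eisenstein class $\varphi_{2n+2}$ is the image at level $1$ of a Kummer class of a Siegel unit tensored with a power of the universal torsion section: for an auxiliary integer $N$ coprime to $p$, the class $(c^{2}-c^{-2n})\,\kappa({_c}g_{1/Np^\infty,0})\otimes e_{1,Np^r}^{2n}$ pushed forward from $\scrM_{1,1}(Np^r)$ all the way to $\scrM_{1,1}$ (and passed through Leray) represents $\varphi_{2n+2}$, with the factor $(c^2-c^{-2n})$ cancelling the $c$-smoothing of the Siegel unit. Consequently, $\varphi_{2i+2}\cup\varphi_{2j+2}$ is, up to the normalization $(c^{2}-c^{-2i})(d^{2}-d^{-2j})$, the image at level $1$ of the cup product
\[
\kappa({_c}g_{1/p^\infty,0})\cup\kappa({_d}g_{0,1/p^\infty})\otimes e_{1,p^r}^{2i} e_{2,p^r}^{2j},
\]
where the pushforward is carried from $\scrM_{1,1}(p^r)$ all the way down to $\scrM_{1,1}$.

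Next, I would compare this with the construction of $z_p(i,j)$. By definition ${_{c,d}z_p(i,j)}$ arises from the same Siegel unit cup product, but with the pushforward only completed from $\scrM_{1,1}(p^r)$ down to $\scrM_{1,1}(p)$, yielding an element of $\mathrm H^1(\mathbf Q(\mu_p),T)$. Under the Hochschild--Serre/Leray identification
\[
\mathrm H^1(\mathbf Q(\mu_p),T) \cong \mathrm H^1\bigl(\mathbf Q,\mathrm H^1_\et(\scrM_{1,1}(p)/\overline{\mathbf Q},\calV^{2m}_{\mathbf Z_p}(2m+2))\bigr),
\]
the corestriction $\mathrm{Cor}_{\mathbf Q(\mu_p)/\mathbf Q}$ corresponds to the remaining pushforward $\pi_{p,*}\colon\scrM_{1,1}(p)\to\scrM_{1,1}$. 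Hence $z_1(i,j)$ is $\pi_{p,*}$ applied to the class whose direct pushforward all the way to $\scrM_{1,1}$ gives $\varphi_{2i+2}\cup\varphi_{2j+2}$.

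Finally, I would compute $\pi_{p,*}$ on the $\calV^{2m}(2m+2)$-cohomology of $\scrM_{1,1}(p)$. By the standard description of the two degeneracy maps between $\scrM_{1,1}(p)$ and $\scrM_{1,1}$ and their relation to Hecke operators at $p$, the trace acts on weight-$(2m+2)$ level-$1$ eigencomponents as the value of the Hecke polynomial $1-T_p X+p^{2m+1}X^{2}$ at $X=1$, i.e.\ as multiplication by $1-T_p+p^{2m+1}$. Combining the three steps yields the stated identity. The main obstacle is this last compatibility: one must confirm that the trace through the tower, combined with the Kato normalization of Siegel units and the $c,d$-smoothing factors $(c^{2}-c^{-2i})(d^{2}-d^{-2j})$, produces exactly the Euler polynomial with the stated exponent $p^{2m+1}$ rather than any shifted variant. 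This bookkeeping is a direct application of the norm compatibilities of Siegel units \cite[Sections 2--4]{Kat04} and of the trace/Hecke compatibility on modular-curve cohomology.
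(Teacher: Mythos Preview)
Your outline has a genuine internal inconsistency that reveals a missing idea. In step~1 you assert that the full pushforward to level~$1$ of the Siegel-unit cup product equals $\varphi_{2i+2}\cup\varphi_{2j+2}$; in step~2 you (correctly) observe that $z_1(i,j)$ is exactly that same full pushforward, since corestriction from $\mathbf Q(\mu_p)$ to $\mathbf Q$ matches $\pi_{p,*}$; but then in step~3 you claim an extra Euler factor appears. Steps~1 and~2 alone already force $z_1(i,j)=\varphi_{2i+2}\cup\varphi_{2j+2}$ with no factor, contradicting step~3 and the theorem. The error is in step~1: even if each $\varphi_{2n+2}$ individually is a pushforward of a Siegel-unit class, pushforward does \emph{not} commute with cup product. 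The projection formula gives only $f_*(f^*a\cup b)=a\cup f_*b$, not $f_*(a\cup b)=f_*a\cup f_*b$, and the level-$p$ class $\mathcal E_{2i}^p(\phi_1^{2i,p})\cup\mathcal E_{2j}^p(\phi_2^{2j,p})$ is not a cup product of pullbacks from level~$1$.

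What actually produces the Euler factor is more delicate than a generic ``trace equals Hecke polynomial at $X=1$'' principle. The paper first invokes Gealy's identification $z_p(i,j)=\mathcal E_{2i}^p(\phi_1^{2i,p})\cup\mathcal E_{2j}^p(\phi_2^{2j,p})$ at level~$p$, then factors the pushforward $\bq_*$ through $\scrM_{1,1}(p)\to\scrM_{1,1}(\Gamma_1(p))\to\scrM_{1,1}(\Gamma_0(p))\to\scrM_{1,1}$. At each stage one applies the projection formula, which requires rewriting each factor as a pullback: the crucial Lemma~4.10 expresses the pushforward of $\phi_2^{2j,p}$ (resp.\ $\phi_1^{2i,p}$) not as a pullback of $\varphi_{2j+2}$ alone but as a combination of $\bq_3^*\varphi_{2j+2}$ and its Atkin--Lehner twist $\lambda_p^{k,*}\bq_3^*\varphi_{2j+2}$, via an explicit Bernoulli-polynomial identity. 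The final $\bq_{3,*}$ then produces four terms; Lemma~4.11 (the Eisenstein eigenvalue $1+p^{k+1}$ under $\bq_{3,*}\lambda_p^*\bq_3^*$) and the definition of $T_p$ as $\bq_{3,*}\lambda_p^{k,*}\bq_3^*$ on cup products combine to yield exactly $1-T_p+p^{2m+1}$. Your sketch skips this entire mechanism; the Atkin--Lehner involution and the explicit shape of $\phi_1^{2i,p},\phi_2^{2j,p}$ as functions on $G(\mathbf F_p)$ are not optional bookkeeping but the substance of the computation.
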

Before to show the theorem, let us give a proof of our main theorem.
We will give a proof of Theorem \ref{thm4-5} in the next subsection.
\begin{corollary}\label{cor4-6}
	Under Hypothesis $\ref{hyp2-2}$, the dimension of the $\mathbf Q_p$-vector space
	\[
	\sum_{i+j=m}\mathbf Q_pz_1(i,j)\subset \mathrm H^1_{\mathrm f}(\mathbf Q,\mathrm H^1_{\mathrm{cusp}}({\scrM_{1,1}}/\overline{\mathbf Q},\calV^{2m}(2m+2))
	\]
	is greater than or equal to the dimension of $S_{2m+2}$.
\end{corollary}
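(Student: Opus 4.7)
The plan is to assemble the corollary from Theorem \ref{thm4-5} and Corollary \ref{cor3-6} via an injectivity argument for the Hecke operator $1-T_p+p^{2m+1}$. Theorem \ref{thm4-5} identifies each specialization $z_1(i,j)$ as the image of the cup product $\varphi_{2i+2}\cup\varphi_{2j+2}$ under this operator, and Corollary \ref{cor3-6} already bounds the dimension of the span of those cup products from below by $\dim_{\mathbf Q}S_{2m+2}$. So the only thing left is to verify that $1-T_p+p^{2m+1}$ acts without kernel on the cohomology space in which the cup products naturally sit.

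First I would pass through the Hecke-equivariant injection recorded after Corollary \ref{cor3-6},
\[
\Ext^2_{\ME_p(\scrM_{1,1})}(\mathbf Q_p,\calV^{2m}(2m+2)) \hookrightarrow \mathrm H^1\!\left(\mathbf Q,\mathrm H^1_{\mathrm{cusp}}(\scrM_{1,1}/\overline{\mathbf Q},\calV^{2m}(2m+2))\right),
\]
so that the problem is transported to Galois cohomology of the cuspidal piece. By the standard Hecke decomposition, $\mathrm H^1_{\mathrm{cusp}}(\scrM_{1,1}/\overline{\mathbf Q},\calV^{2m}(2m+2))$ splits as a direct sum $\bigoplus_f V_f(2m+2)$ indexed by normalized full-level Hecke eigen cusp forms $f$ of weight $2m+2$, with $T_p$ acting on $V_f$ by the eigenvalue $a_p(f)$. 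Therefore $1-T_p+p^{2m+1}$ acts on each direct summand $\mathrm H^1(\mathbf Q, V_f(2m+2))$ as multiplication by the scalar $1-a_p(f)+p^{2m+1}$.

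The heart of the argument is then the Ramanujan--Petersson bound $|a_p(f)|\leq 2p^{(2m+1)/2}$ proved by Deligne: from
\[
1+p^{2m+1}-|a_p(f)| \geq \bigl(p^{(2m+1)/2}-1\bigr)^2 > 0
\]
it follows that $1-a_p(f)+p^{2m+1}\neq 0$ for every eigenform $f$, so $1-T_p+p^{2m+1}$ acts invertibly on $\mathrm H^1(\mathbf Q,\mathrm H^1_{\mathrm{cusp}}(\scrM_{1,1}/\overline{\mathbf Q},\calV^{2m}(2m+2)))$. Applying this invertible operator to the subspace $\sum_{i+j=m}\mathbf Q_p\,\varphi_{2i+2}\cup\varphi_{2j+2}$ preserves its $\mathbf Q_p$-dimension, and by Theorem \ref{thm4-5} the resulting image is exactly $\sum_{i+j=m}\mathbf Q_p\,z_1(i,j)$. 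Combined with Corollary \ref{cor3-6}, this gives the desired lower bound $\dim_{\mathbf Q_p}\sum_{i+j=m}\mathbf Q_p\,z_1(i,j)\geq \dim_{\mathbf Q}S_{2m+2}$.

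The only delicate point to check is that the Hecke action arising in the construction of the Kato Euler system (and therefore in Theorem \ref{thm4-5}) genuinely matches the cohomological $T_p$ that diagonalizes the cuspidal summand; this should be a routine compatibility since both actions are defined via the standard $p$-th Hecke correspondence on $\scrM_{1,1}$, but it is the one verification that is not purely formal. No new deep ingredients are needed beyond Theorem \ref{thm4-5}, Corollary \ref{cor3-6}, and the Weil bound.
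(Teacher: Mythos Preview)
Your proposal is correct and matches the paper's own proof essentially verbatim: the paper simply states that by the Weil conjecture proved by Deligne the operator $1-T_p+p^{2m+1}$ is invertible on $\mathrm H^1_{\mathrm f}(\mathbf Q,\mathrm H^1(\scrM_{1,1}/\overline{\mathbf Q},\calV^{2m}(2m+2)))$, and then concludes from Theorem~\ref{thm4-5} and Corollary~\ref{cor3-6}. Your additional remarks (the explicit Ramanujan--Petersson inequality and the Hecke-compatibility caveat) are reasonable elaborations but go beyond what the paper records.
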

\begin{proof}
	By the Weil conjecture proved by Deligne, $\left(1-T_p+p^{2m+1}\right)$
	is an invertible operator on $\mathrm H^1_{\mathrm f}(\mathbf Q,\mathrm H^1_{\mathrm{cusp}}({\scrM_{1,1}}/\overline{\mathbf Q},\calV^{2m}(2m+2))$. Therefore, the conclusion
	of this corollary is a direct consequence of Theorem \ref{thm4-5}
	and Corollary \ref{cor3-6}.
\end{proof}
\begin{theorem}
	\label{thm4-7}Under Hypothesis \ref{hyp2-2}, the following equation of $\mathbf Q_p$-vector spaces
	hold:
	\[
	\sum_{i+j=m}\mathbf Q_pz_1(i,j)= \mathrm H^1_{\mathrm f}(\mathbf Q,\mathrm H^1_{\mathrm{cusp}}({\scrM_{1,1}}/\overline{\mathbf Q},\calV^{2m}(2m+2)).
	\]
	Moreover, we have
	\[
	\dim_{\mathbf Q_p}\mathrm H^1_{\mathrm f}(\mathbf Q,\mathrm H^1_{\mathrm{cusp}}({\scrM_{1,1}}/\overline{\mathbf Q},\calV^{2m}(2m+2))=\dim_{\mathbf Q}S_{2m+2}.
	\]
\end{theorem}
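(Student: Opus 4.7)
The plan is to pair the lower bound of Corollary~\ref{cor4-6} with an Euler-system upper bound coming from Kato's theorem, and then close everything by a dimension squeeze inside the Eichler--Shimura decomposition. First I would base change to $\overline{\bfQ_p}$ and decompose
\[
\mathrm H^1_{\mathrm f}\bigl(\bfQ,\mathrm H^1_{\mathrm{cusp}}(\scrM_{1,1}/\overline{\bfQ},\calV^{2m}(2m+2))\bigr)\otimes_{\bfQ_p}\overline{\bfQ_p}=\bigoplus_f\mathrm H^1_{\mathrm f}(\bfQ,V_f(2m+2)),
\]
where $f$ runs over the $\dim_{\bfQ}S_{2m+2}$ normalized Hecke eigen cuspforms of weight $2m+2$ and full level (the count coming from Eichler--Shimura--Manin). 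By Theorem~\ref{thm4-5} together with the invertibility of $1-T_p+p^{2m+1}$ on the cuspidal Bloch--Kato finite part (as in Corollary~\ref{cor4-6}), the $\overline{\bfQ_p}$-span $W$ of $\{z_1(i,j)\}_{i+j=m}$ coincides with that of $\{\varphi_{2i+2}\cup\varphi_{2j+2}\}_{i+j=m}$. Writing $S_f\subset\mathrm H^1_{\mathrm f}(\bfQ,V_f(2m+2))$ for the $f$-isotypic projection of $W$, Corollary~\ref{cor4-6} gives $\sum_f\dim S_f=\dim_{\overline{\bfQ_p}}W\geq\dim_{\bfQ}S_{2m+2}$.

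The next step is the pointwise upper bound via Kato. For every $f$ with $S_f\neq 0$, I pick $i,j$ with $e_fz_1(i,j)\neq 0$; Corollary~\ref{cor4-4} then forces $\mathrm H^2(\bfZ[1/p],V_f(2m+2))=0$. Combined with $\mathrm H^0(\bfQ,V_f(2m+2))=0$ (non-trivial Hodge--Tate weights for $m\geq 1$) and the global Euler--Poincar\'e characteristic formula $\chi=-\dim V_f(2m+2)^{c=-1}=-1$ (oddness of $V_f$, with complex conjugation acting trivially on $\bfQ_p(2m+2)$ since $2m+2$ is even), I conclude $\dim\mathrm H^1(\bfZ[1/p],V_f(2m+2))=1$ and hence $\dim\mathrm H^1_{\mathrm f}(\bfQ,V_f(2m+2))\leq 1$.

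The proof concludes with the squeeze
\[
\dim_{\bfQ}S_{2m+2}\ \leq\ \sum_f\dim S_f\ \leq\ \sum_f\dim\mathrm H^1_{\mathrm f}(\bfQ,V_f(2m+2))\ \leq\ \#\{f\}=\dim_{\bfQ}S_{2m+2}.
\]
The hard part will be justifying the final inequality, since Kato's upper bound is contingent on the \emph{pointwise} non-vanishing $S_f\neq 0$, which is not directly furnished by the \emph{global} lower bound. The resolution is that any $f$ with $S_f=0$ would strictly decrease $\sum_f\dim S_f$, contradicting the leftmost inequality; hence $S_f\neq 0$ uniformly, Kato applies to every $f$, and the whole chain collapses to equalities. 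This forces $\dim S_f=\dim\mathrm H^1_{\mathrm f}(\bfQ,V_f(2m+2))=1$ for every $f$, so that $W=\bigoplus_f\mathrm H^1_{\mathrm f}(\bfQ,V_f(2m+2))$. Descending from $\overline{\bfQ_p}$ back to $\bfQ_p$ yields both of the asserted equalities, and the final dimension formula is immediate.
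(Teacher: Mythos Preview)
Your argument is correct and follows essentially the same strategy as the paper: pair the lower bound of Corollary~\ref{cor4-6} with the Euler-system upper bound from Corollary~\ref{cor4-4}, then count. Your explicit invocation of the global Euler--Poincar\'e formula (together with oddness of $V_f$) and your pigeonhole squeeze make precise what the paper compresses into the phrase ``by the inductive argument on the dimension with Corollary~\ref{cor4-6}''; the paper works over the Hecke algebra $\calH_p$ rather than base-changing to $\overline{\bfQ_p}$, but this is only a cosmetic difference. One small imprecision: you write $\sum_f\dim S_f=\dim_{\overline{\bfQ_p}}W$, but a priori only $\dim W\le\sum_f\dim S_f$ holds (equality would require $W$ to be Hecke-stable); since your squeeze only needs the inequality, nothing is affected.
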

\begin{proof}Let $\calB_{2m+2}={f_1,\dots,f_n}$ be the set of full-level normalized Hecke eigen cuspforms
	of weight $2m+2$ such that the equation
	\[
	\mathrm H^1_{\mathrm{cusp}}({\scrM_{1,1}}/\overline{\mathbf Q},\calV^{2m})=\bigoplus_{i=1}^nV_{f_i}
	\] holds.
According to Corollary \ref{cor3-6}, we may assume that the natural projection
\[
\sum_{i+j=m}\mathbf Q_pz_1(i,j)\subset \mathrm H^1_{\mathrm f}(\mathbf Q,\mathrm H^1_{\mathrm{cusp}}({\scrM_{1,1}}/\overline{\mathbf Q},\calV^{2m}(2m+2))\ontomap \mathrm H^1_{\mathrm f}(\mathbf Q,V_{f_1}(2m+2))
\]
is non-zero. This implies that there exists a positive integers $i$ and $j$ such that
$i+j=2m $ and $e_{f_1}z_1(i,j)\neq 0$.
Therefore, by Corollary \ref{cor4-6} and by the Euler-Poincare characteristic of
Galois cohomology (\cite[Lemma 2]{Jannsen89}), we have that
\begin{equation}\label{eq22}
	\dim_{\bfQ_p} \mathrm H^1_{\mathrm f}(\mathbf Q,V_{f_1}(2m+2))=
\dim_{\bfQ_p} \mathrm H^1(\mathbf Z[1/p],V_{f_1}(2m+2))=\frac{1}{2}\dim_{\bfQ_p}(V_{f_1}).
\end{equation}
Here, the first equation  follows from \cite[Corollary 3.8.4]{BK90} and from the equation $F^0D_{\dR}(V_{f_i}(2m+2))=0$.
Since the equation
\begin{equation}
	\label{eq23}
\dim_{\bfQ}(S_{2m+2})=\frac{1}{2}\dim_{\bfQ_p}(\mathrm H^1_{\mathrm{cusp}}({\scrM_{1,1}}/\overline{\mathbf Q},\calV^{2m}))=\sum_{i=1}^n\frac{1}{2}\dim_{\bfQ_p}(V_{f_i})
\end{equation}
holds by the Eichler-Shimura isomorphism, 
the first assertion of the Theorem holds when $n=1$.

If $n\geq 2$, we repeat the same argument as follows.
Let $W\subset  \bigoplus_{i=2}^n\rmH^1(\bfQ,V_{f_i}(2m+2))$ be the image of $\sum_{i+j=m}\mathbf Q_pz_1(i,j)$ under the natural projection
\[
 \mathrm H^1_{\mathrm f}(\mathbf Q,\mathrm H^1_{\mathrm{cusp}}({\scrM_{1,1}}/\overline{\mathbf Q},\calV^{2m}(2m+2))\ontomap\bigoplus_{i=2}^n\rmH^1(\bfQ,V_{f_i}(2m+2)).
\]
Then, by the equations (\ref{eq22}), (\ref{eq23}), and Corollary \ref{cor3-6},
the inequality
\[
\dim_{\bfQ_p}(W)\geq \sum_{i=2}^n\frac{1}{2}\dim_{\bfQ_p}(V_{f_i})
\]
holds. Therefore, $W\neq 0$ if $n\geq 2$. Then, by repeating the same argument in the first paragraph, we can show that
\begin{equation}\label{eq24}
	\dim_{\bfQ_p} \mathrm H^1_{\mathrm f}(\mathbf Q,V_{f_i}(2m+2))=\frac{1}{2}\dim_{\bfQ_p}(V_{f_i}).
\end{equation}
for all $i$ by an induction on $n$.
By (\ref{eq22}), (\ref{eq24}), and Corollary \ref{cor3-6},
we have the conclusion of the theorem.
\end{proof}

\subsection{Eisenstein symbols and the proof of Theorem \ref{thm4-5}}
To prove Theorem \ref{thm4-5}, we use Beilinson's Eisenstein symbols.
Let $G$ denote $\mathrm{GL}_2$ over $\mathbf Z$ and let $P$
be the subgroup of $G$ defined by
\[
P=\left\{\begin{pmatrix}
	*&*\\
	0&1
\end{pmatrix}\in G\right\}.
\]
Let $I_2$ be the two-by-two identity matrix. For a commutative ring $R$, let $\la -I_2\ra P(R)$ be the subgroup of $G(R)$ generated by $P(R)$ and $-I_2$.
Let $\mathrm{Ind}_{\la -I_2\ra P(\mathbf Z/n\mathbf Z)}^{G(\mathbf Z/n\mathbf Z)}(\mathbf Q)$ be the induced representation of the trivial representation of
$\la -I_2\ra P(\bfZ/n\bfZ)$ on $\bfQ$ to $G(\bfZ/n\bfZ)$. Explicitly, the underlying vector space
of this representation is given by
\begin{multline}
\mathrm{Ind}_{\la -I_2\ra P(\mathbf Z/n\mathbf Z)}^{G(\mathbf Z/n\mathbf Z)}(\mathbf Q)\\
=\{f\colon G(\bfZ/n\bfZ)\to \bfQ\ |\ f(hg)=f(g),\ \forall g\in G(\bfZ/n\bfZ),\ \forall h\in \la -I_2\ra P(\bfZ/n\bfZ)\}.
\end{multline}
The universal elliptic curve over $Y(n)$ is denoted by
\[
\pi_n\colon \mathscr E_n\to Y(n)
\]
and let $\mathscr E_n^k$ be the $k$-fold fiber product
of $\mathscr E_n$ over $Y(n)$.
We only consider the case where \emph{$k$ is even} for simplifying notation.
For each positive integers $m,n$ such that $m|n$, $\bp_{n,m}\colon Y(n)\to Y(m)$
be the natural morphism defined by $[E,\alpha]\mapsto [E,n/m\alpha]$.
When $m=1$, $\bp_{n,1}$ will be denoted by $\bp_{n}$.
The symbol $X(n)$ denotes the smooth compactification of $Y(n)$.

For an algebraic variety $S$ over $\mathbf Q$,
its motivic cohomology group $\mathrm H^i_{\mathcal M}(S,\mathbf Q(j))$
to be $K_{2j-i}^{(j)}(S)\otimes_{\mathbf Z}\mathbf Q$.
The motivic residue map is a homomorphism
\begin{equation}
	\mathrm{res}_{\mathcal M}\colon \mathrm H^{k+1}_{\mathcal M}(\mathscr E_n^k,\mathbf Q(k+1))\to \mathrm H^0_{\mathcal M}\left(X(n)\setminus Y(n),\mathbf Q\right)
	\cong \mathrm{Ind}_{\la -I_2\ra P(\mathbf Z/n\mathbf Z)}^{G(\mathbf Z/n\mathbf Z)}(\mathbf Q)
\end{equation}
which is compatible with realizations.
Here, for the last isomorphism, we use the same uniformization of modular curves as in \cite{HuK99}, which is different from that in \cite{Kat04}.
In \cite{Beilinson86}, Beilinson constructed a homomorphism
\begin{equation}
	\label{eq18}
	\mathrm{Eis}_k\colon \mathrm{Ind}_{\la -I_2\ra P(\mathbf Z/n\mathbf Z)}^{G(\mathbf Z/n\mathbf Z)}(\mathbf Q)\to
	\mathrm H^{k+1}_{\mathcal M}(\mathscr E_n^k,\mathbf Q(k+1))
\end{equation}
which is a right-inverse of $\mathrm{res}_{\mathcal M}$.
We define $\mathrm{Eis}_k^p$ to be the composition of $\mathrm{Eis}_k$
, the Soul\'e's $p$-adic \'etale regulator,
and projection by Scholl's projector $\Pi_{\epsilon}$ (\cite[1.1.2]{Scholl})
\begin{multline}
	\mathrm{Eis}_k^p\colon \mathrm{Ind}_{\la -I_2\ra P(\mathbf Z/n\mathbf Z)}^{G(\mathbf Z/n\mathbf Z)}(\mathbf Q)\xrightarrow{\mathrm{Eis}_k}\mathrm H^{k+1}_{\mathcal M}(\mathscr E_n^k,\mathbf Q(k+1))\\
	\xrightarrow{\reg_{\pet}}\mathrm H^{k+1}_{\et}(\mathscr E_n^k,\mathbf Q_p(k+1))\to \mathrm H^1_{\et}(Y(n),\calV_k(k+1)).
\end{multline}
Note that, when $k$ is not equal to zero, then the $\mathbf Q_p$-linear extension of $\mathrm{Eis}_k^p$
is an isomorphism.
Let
\[
u_n\colon \frakH\times \bfC^k\times G(\bfZ/n\bfZ)\twoheadrightarrow (\bfZ^{2k}\rtimes \SL_2(\bfZ))\backslash(\frakH\times \bfC^k\times (\bfZ/n\bfZ)^k\rtimes G(\bfZ/n\bfZ))\cong \scrE_n^k(\bfC)
\]
be a complex uniformization of $\scrE_n^k$ as in \cite[Section 7, p.328]{HuK99}.
Let $\tau$ and $z_i$ be the standard coordinates of $\frakH$ and the $i$th component
of $\bfC^k$, respectively.
Then,  the pull-back of $\bp_n^*\varphi_{k+2}\in  \rmH^k(\scrE_n^k(\bfC),\bfQ(k+1))$ to $\frakH\times \bfC^k\times \{g\}$ is represented by the differential form
\[
(2\pi\sqrt{-1})^k\left(-\frac{B_{k+2}}{k+2}+\sum_{n=1}^\infty\sigma_{k+1}(n)q^n\right)\frac{dq}{q}\wedge dz_1\wedge\cdots\wedge dz_k
\]
for each $g\in G(\bfZ/n\bfZ)$,
where $B_{k+2}$ is the $(k+2)$nd Bernoulli number and $q$ is equal to $\exp(2\pi\sqrt{-1}\tau)$.
Therefore, by comparing an explicit description of the Eisenstein symbol in \cite[p.329]{HuK99},
we have the equation that
\begin{equation}
	\label{eq19}
	\boldsymbol p_n^*(\varphi_{k+2})=\mathrm{Eis}_k^{p}\left(-n\frac{B_{k+2}}{k+2}\right).
\end{equation}
Here, we understand each rational number $\alpha$ as the constant function defined by $\alpha$, which is an element of $\mathrm{Ind}_{\la -I_2\ra P(\mathbf Z/n\mathbf Z)}^{G(\mathbf Z/n\mathbf Z)}(\mathbf Q)$.
\begin{definition}
	\label{dfn4-7}We define two elements $\phi_1^{k,n},\phi_2^{k,n}$
	of $\mathrm{Ind}_{\la -I_2\ra P(\mathbf Z/n\mathbf Z)}^{G(\mathbf Z/n\mathbf Z)}(\mathbf Q)$.
		\[
	\phi^{k,n}_1\left(\begin{pmatrix}
		a&b\\c&d
	\end{pmatrix}\right)=\frac{n^{k+1}}{k+2}B_{k+2}\left(\left\langle\frac{c}{n} \right\rangle\right)
	\]
	and by
	\[
	\phi^{k,n}_2\left(\begin{pmatrix}
		a&b\\c&d
	\end{pmatrix}\right)=\frac{n^{k+1}}{k+2}B_{k+2}\left(\left\langle\frac{d}{n}\right \rangle\right).
	\]
	Here, $\langle -\rangle$ is the inverse of the natural bijection $[0,1)\xrightarrow{\sim}\mathbf R/\mathbf Z$ and $B_{k+2}(x)$ is the $(k+2)$nd Bernoulli polynomial.
\end{definition}
\begin{proposition}[{\cite[Proposition 10.2.1, Lemma 10.3.1]{Gealy}}]
	\label{prop4-8}The following equation holds$:$
	\[
	\mathrm{Eis}_{2i}^p(\phi^{2i,p}_1)\cup \mathrm{Eis}_{2j}^p(\phi^{2j,p}_2)=z_p(i,j).
	\]
\end{proposition}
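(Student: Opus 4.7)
The plan is to compare both sides in $\rmH^1_{\et}(\scrM_{1,1}(p)/\bfQ,\calV^{2m}(2m+2))$ by working at finite level $np^r$ on the Kuga--Sato variety $\scrE_{np^r}^{2m}$, then corestricting to level $p$ and passing to the limit in $r$. The right-hand side $z_p(i,j)$ is, by its construction, the image of the cup product $\kappa({}_{c}g_{1/np^r,0})\cup\kappa({}_{d}g_{0,1/np^r})$ under the moment contraction with $e_{1,np^r}^{2i}e_{2,np^r}^{2j}$ followed by Scholl's projector $\varepsilon_{2m}$, rescaled by the factor $(c^2-c^{-2i})(d^2-d^{-2j})$; the left-hand side is already a cup product of weight-$2i$ and weight-$2j$ Eisenstein classes at level $p$.

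The first step is to recognize the Siegel units as weight-zero Eisenstein symbols: a classical computation going back to Beilinson and made precise in \cite[Proposition 1.3]{Kat04} identifies $\kappa({}_{c}g_{1/np^r,0})$ with $\mathcal E_0^p$ applied to a specific $c$-regularized distribution whose underlying unregularized coefficients involve $np^r B_2(\langle \cdot / np^r\rangle)/2$, and similarly for ${}_{d}g_{0,1/np^r}$ in the other cusp direction. This expresses the right-hand side as the moment contraction of a cup product of two weight-zero Eisenstein symbols.

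The main step is the moment compatibility of the Eisenstein symbol: the contraction of $\mathcal E_0(f_1)\cup\mathcal E_0(f_2)$ with $e_{1,np^r}^{2i}e_{2,np^r}^{2j}$, composed with $\varepsilon_{2m}$, equals $\mathcal E_{2i}(M_1^{2i}f_1)\cup\mathcal E_{2j}(M_2^{2j}f_2)$, where $M_1^{2i}$ and $M_2^{2j}$ are the $2i$-th and $2j$-th moment maps on distributions in the $e_1$- and $e_2$-directions. The key computation is that the $(2i)$-th moment in the $e_1$-direction of the Bernoulli distribution $n B_2(\langle \cdot / n\rangle)/2$ is $n^{2i+1}B_{2i+2}(\langle \cdot / n\rangle)/(2i+2)$, which is precisely the distribution $\phi_1^{2i,n}$ of Definition \ref{dfn4-7}; a symmetric statement holds for $\phi_2^{2j,n}$. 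This moment identity follows from Beilinson's explicit polylogarithmic presentation of $\mathcal E_k$ together with the generating-function relation for Bernoulli polynomials. Multiplying through by $(c^2-c^{-2i})(d^2-d^{-2j})$ then cancels the $c$- and $d$-regularization on each factor independently, and passage to the limit in $r$ followed by corestriction delivers the asserted equality.

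The main obstacle is a careful reconciliation of conventions: the complex uniformization of Kuga--Sato varieties from \cite{HuK99} versus that from \cite{Kat04}, the identification of $\mathcal E_k^p$ with Scholl's original projector construction, the choice of basis $(e_1,e_2)$ of $\calV^1$ used in the moment contraction, and the normalization of Bernoulli polynomials versus Bernoulli numbers in Definition \ref{dfn4-7}. Our normalization check is pinned down by the formula $\boldsymbol p_n^*(\varphi_{k+2})=\mathcal E_k^p(-nB_{k+2}/(k+2))$ recorded in \eqref{eq19}, which fixes all absolute constants. Once these conventions are aligned, the statement reduces to the detailed computation in \cite[Proposition 10.2.1, Lemma 10.3.1]{Gealy}, on which we directly rely.
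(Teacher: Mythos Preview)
The paper does not give its own proof of this proposition: it is stated with a direct citation to Gealy's thesis \cite[Proposition~10.2.1, Lemma~10.3.1]{Gealy}, followed only by a remark that the functions $\phi_1,\phi_2$ here differ visually from Gealy's due to a different choice of complex uniformization of the modular curves. Your proposal is therefore entirely consistent with the paper's treatment---you outline the structure of Gealy's argument (Siegel units as weight-zero Eisenstein symbols, the moment map raising the weight, the Bernoulli-polynomial moment identity producing exactly $\phi_1^{2i,p}$ and $\phi_2^{2j,p}$, and cancellation of the $(c,d)$-regularization) and then explicitly defer to the same source for the detailed computation. Your attention to the convention mismatch between \cite{HuK99} and \cite{Kat04} is exactly the point the paper flags in its remark after the proposition.
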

\begin{remark}
	Note that those two maps are different from $\phi_1,\phi_2$ defined in {\cite[Subsection 5.3]{Gealy}}
	visually.
	However, the Proposition \ref{prop4-8} above still holds. This difference comes from the difference of the uniformization of modular curves.
\end{remark}
In the rest of this paper, we consider the case $n=p$.
Let
\[
\bq\colon \mathscr E_p^k\to\mathscr E^k
\]
be the induced morphism by $\boldsymbol p=\boldsymbol p_p$.
Then, Theorem \ref{thm4-5} is a direct consequence of Proposition \ref{prop4-8} and the following proposition.
\begin{proposition}
	\label{prop4-9}We have
	\[
	\bq_{*}\left(\mathrm{Eis}_{2i}^p(\phi^{2i,p}_1)\cup \mathrm{Eis}_{2j}^p(\phi^{2j,p}_2)\right)=\left(1-T_p+p^{2m+1}\right)\left(\varphi_{2i+2}\cup\varphi_{2j+2}\right).
	\]
\end{proposition}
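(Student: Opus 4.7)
The plan is a direct computation with Beilinson's Eisenstein symbols, driven by the key identity \eqref{eq19}, which expresses $\bp^*\varphi_{k+2}$ as $\mathcal E_k^p$ applied to a specific constant function in $\mathrm{Ind}_{\pm P(\mathbf Z/p\mathbf Z)}^{G(\mathbf Z/p\mathbf Z)}(\mathbf Q)$. The strategy is to decompose each test function $\phi_i^{2i,p}$ into its ``constant'' part (a scalar multiple of the constant function $\mathbf 1$) and a mean-zero remainder, then analyze how each component transforms under the cup product and the pushforward $\bq_*$.

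First, using the classical distribution relation
\[
\sum_{c \in \mathbf Z/p\mathbf Z} B_{k+2}\!\left(\left\langle \frac{c}{p}\right\rangle\right) = p^{-k-1}\, B_{k+2},
\]
together with the observation that $\phi_1^{2i,p}(g)$ depends only on $c \bmod p$ (and $\phi_2^{2j,p}(g)$ only on $d \bmod p$), I would decompose
\[
\phi_1^{2i,p} = \lambda_1 \mathbf 1 + \widetilde\phi_1, \qquad \phi_2^{2j,p} = \lambda_2 \mathbf 1 + \widetilde\phi_2,
\]
where $\widetilde\phi_i$ has vanishing average. By \eqref{eq19}, $\mathcal E_{2i}^p(\lambda_1 \mathbf 1)$ and $\mathcal E_{2j}^p(\lambda_2 \mathbf 1)$ are explicit rational multiples of $\bp^*\varphi_{2i+2}$ and $\bp^*\varphi_{2j+2}$ respectively.

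Next I expand the cup product bilinearly into four terms and compute $\bq_*$ of each. The constant--constant term collapses, via $\bp^*\varphi_{2i+2} \cup \bp^*\varphi_{2j+2} = \bp^*(\varphi_{2i+2} \cup \varphi_{2j+2})$ together with the projection formula $\bq_*\bq^* = \deg(\bq)\cdot \mathrm{id}$, to a rational multiple of $\varphi_{2i+2} \cup \varphi_{2j+2}$; careful tracking of scalars should reproduce the coefficient $p^{2m+1}$. The two cross terms encode the action of the Hecke correspondence at $p$ through its double-coset description $\Gamma(1)\,\mathrm{diag}(1,p)\,\Gamma(1)$, producing the $-T_p$ contribution. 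The remaining mean-zero by mean-zero term collapses, after a second application of the distribution relation, to the residual $\varphi_{2i+2} \cup \varphi_{2j+2}$ contribution, supplying the coefficient $1$.

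The main obstacle will be the verification of scalars and the precise identification of the Hecke correspondence from the $G(\mathbf Z/p\mathbf Z)$-combinatorics, so that the three contributions combine to exactly $(1 - T_p + p^{2m+1})$ rather than some other linear combination of $1$, $T_p$, and $p^{2m+1}$. The most delicate piece is matching the action on $q$-expansions of Eisenstein series with that of the geometric Hecke correspondence, mediated by the Hochschild--Serre spectral sequence and Scholl's projector $\varepsilon_k$. This calculation is closely parallel to those carried out in Gealy's thesis and \cite{Kat04}; the cup product--Eisenstein symbol identification itself is already supplied by Proposition \ref{prop4-8}, so what is genuinely new here is the precise trace relation from level $p$ down to level $1$.
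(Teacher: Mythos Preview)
Your plan diverges from the paper's at the key structural step, and the divergence leaves a real gap. The paper does not decompose $\phi_1^{2i,p},\phi_2^{2j,p}$ into constant plus mean--zero at level $\Gamma(p)$; instead it factors $\bq=\bq_3\bq_2\bq_1$ through the tower $\scrM_{1,1}(p)\to\scrM_{1,1}(\Gamma_1(p))\to\scrM_{1,1}(\Gamma_0(p))\to\scrM_{1,1}$ and pushes down one step at a time. The essential input is Lemma~\ref{lem4-10}, which identifies $\mathcal E_k^p(\tilde\phi_k)$ with $(\bq_2\bq_1)^*\lambda_p^{k,*}\bq_3^*\varphi_{k+2}$, where $\lambda_p$ is the isogeny $[E,C]\mapsto[E/C,E[p]/C]$ over $\scrM_{1,1}(\Gamma_0(p))$. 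After the first two pushforwards, both factors live on $\scrM_{1,1}(\Gamma_0(p))$ and are expressed entirely in terms of $\bq_3^*\varphi$ and $\lambda_p^*\bq_3^*\varphi$; the operator $T_p$ then appears \emph{by definition} as $\bq_{3,*}(\lambda_p^*\bq_3^*\varphi_{2i+2}\cup\lambda_p^*\bq_3^*\varphi_{2j+2})$, and Lemma~\ref{lem4-11} handles the mixed terms via $\bq_{3,*}\lambda_p^*\bq_3^*\varphi_{k+2}=(1+p^{k+1})\varphi_{k+2}$.

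Your decomposition only recognizes $\bp^*\varphi_{k+2}$ through the constant piece via \eqref{eq19}; the mean--zero remainders $\widetilde\phi_1,\widetilde\phi_2$ carry no analogous geometric tag, so the assertion that the cross terms ``encode the Hecke correspondence through its double--coset description'' has no mechanism behind it. To see $T_p$ you must produce the second projection $\bq_3\circ\lambda_p$ somewhere, and your splitting never does. Moreover, your bookkeeping of the scalars is off: in the paper's computation the $-T_p$ contribution comes from the $\lambda_p^*$--by--$\lambda_p^*$ term alone, while the remaining three terms sum to $1+p^{2m+1}$; there is no clean separation into $p^{2m+1}$, $-T_p$, $1$ along constant--constant, cross, and mean--zero--squared lines. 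The fix is to replace your ad hoc splitting at level $\Gamma(p)$ by the stepwise descent through $\Gamma_1(p)$ and $\Gamma_0(p)$, using the distribution relation at each stage to rewrite the pushed--down test function as a combination of the constant and $\tilde\phi_k$, and then invoking the $\lambda_p$ identification.
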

Though the proposition above seems to be well-known for professional
researchers, we give a proof of this because the author could not find
a suitable reference (it seems that the proof of \cite[Lemma 10.3.2]{Gealy} is incompleted).
For a proof of this proposition, we prepare some lemmas.
Let $Y_1(p)$ and $ Y_0(p)$ be the moduli stacks over $\bfQ$ of
elliptic curves with $\Gamma_1(p)$-level structures and $\Gamma_0(p)$-level structures,
respectively (\cite[(3.2), (3.4)]{KM84}).
Let
\[
Y(p)\to Y_1(p)\to Y_0(p)\to\scrM_{1,1} 
\]
be a natural factorization of $\boldsymbol p$ defined by
\[
[E,e_1,e_2]\mapsto [E,e_2]\mapsto [E,\mathbf Ze_2]\mapsto [E].
\]
Those morphisms of moduli stacks also induce a factorization
\[
\mathscr E_p^k\xrightarrow{{\boldsymbol q}_1}\mathscr E_{1,p}^k\xrightarrow{{\boldsymbol q}_2}\mathscr E_{0,p}^k\xrightarrow{{\boldsymbol q}_3}\mathscr E^k
\]
of $\bq$.
Here, $\mathscr E_{i,p}$ denotes the universal elliptic curve over $Y_i(p)$.
Let
$
\lambda_p\colon Y_0(p)\xrightarrow{\sim}Y_0(p)
$
be the morphism defined by the correspondence
\[
\lambda_p;\ [E,C]\mapsto \left[E/C,\text{Image of } E[p]\right]
\]
and let
\[
\lambda_p^k\colon \mathscr E_{1,p}^k\to \mathscr E_{0,p}^k
\]
be the induced isogeny of degree $p^k$.
Note that, under the complex uniformization
\[
(\bfZ^{2k}\rtimes\SL_2(\bfZ))\backslash (\frakH\times \bfC^k\times \bfF_p^{2k}\rtimes G(\bfF_p))/B(\bfF_p)\cong \mathscr E_{0,p}^k(\bfC),
\]
the morphism $\lambda_p^k$ is given by
\begin{equation}
	\label{descriptionoflambdap}
	\lambda_p^k;\ [\tau,\bz,\bu,g]\mapsto \left[\frac{-1}{p\tau},\frac{-1}{\tau}\bz,\bu,g \right].
\end{equation}
Here, $B$ is the Borel subgroup of $G$ containing $P$.
Let $\tilde{\phi}$ be an element of the module $\mathrm{Ind}_{\la -I_2\ra P(\mathbf F_p)}^{G(\mathbf F_p)}\mathbf Q$ defined by the equation
\[
\tilde \phi_k\left(\begin{pmatrix}
	a&b\\c&d
\end{pmatrix}\right)=-\frac{p^{k+1}}{k+2}\sum_{\alpha\in \mathbf F_p}B_{k+2}\left(\left\langle\frac{\alpha c}{p}\right\rangle\right).
\]
\begin{lemma}
	\label{lem4-10}
	We have that
	\[
	\mathrm{Eis}_k^p(\tilde \phi_k)=({\boldsymbol q}_2{\boldsymbol q}_1)^*{\lambda}_p^{k,*}\bq_3^*\varphi_{k+2}.
	\]
\end{lemma}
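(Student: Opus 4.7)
The plan is to verify Lemma~\ref{lem4-10} by computing a complex-analytic representative of $(\boldsymbol{q}_2\boldsymbol{q}_1)^*\lambda_p^{k,*}\boldsymbol{q}_3^*\varphi_{k+2}$ on the uniformization of $\mathscr{E}_p^k(\mathbf{C})$ and matching its residues at the cusps of $\overline{\mathscr{M}}_{1,1}(p)$ with $\tilde{\phi}_k$, exploiting the fact that the Eisenstein symbol $\mathcal{E}_k^p$ is a right-inverse of the motivic residue map $\mathrm{res}_{\mathcal{M}}$.

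First, I compute the pullback of $\varphi_{k+2}$ using the explicit formula (4.4). Restricted to each sheet $\mathfrak{H}\times\mathbf{C}^k\times\{g\}$, the composite $\boldsymbol{q}_3\circ\lambda_p^k\circ\boldsymbol{q}_2\boldsymbol{q}_1$ is the map $(\tau,\mathbf{z})\mapsto(-1/(p\tau),-\mathbf{z}/\tau)$. Starting from the standard representative $(2\pi\sqrt{-1})^{k+1}f(\tau)\,d\tau\wedge dz_1\wedge\cdots\wedge dz_k$ of $\varphi_{k+2}$ recalled before (4.3), a direct computation of the pulled-back differential --- using $d(-z_j/\tau)=-dz_j/\tau+z_j\,d\tau/\tau^2$ (the $d\tau$-terms dying after wedging with $d(-1/(p\tau))=d\tau/(p\tau^2)$) together with the modular transformation $f(-1/(p\tau))=(p\tau)^{k+2}f(p\tau)$ (which follows from $f$ being a weight-$(k+2)$ Eisenstein series for $\mathrm{SL}_2(\mathbf{Z})$) --- yields the representative
\[
p^{k+1}(2\pi\sqrt{-1})^{k+1}f(p\tau)\,d\tau\wedge dz_1\wedge\cdots\wedge dz_k
\]
on every sheet (using $k$ even so $(-1)^k=1$).

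Second, both classes lie in the image of $\mathcal{E}_k^p$: the LHS by definition and the RHS because $\varphi_{k+2}$ is Eisenstein and the pullback preserves this structure. Since $\mathcal{E}_k^p$ is a right-inverse of $\mathrm{res}_{\mathcal{M}}$, equality of residues forces equality of classes. For the residue of $\tilde{\phi}_k$ itself, I use the bijection $\alpha\mapsto\alpha c$ on $\mathbf{F}_p$ for $c\neq 0$ together with the Bernoulli distribution relation $\sum_{j=0}^{p-1}B_{k+2}(j/p)=p^{-(k+1)}B_{k+2}$, obtaining
\[
\tilde{\phi}_k\!\begin{pmatrix}a&b\\c&d\end{pmatrix}=\begin{cases}-\tfrac{p^{k+2}}{k+2}B_{k+2},&c\equiv 0\pmod p,\\-\tfrac{1}{k+2}B_{k+2},&c\not\equiv 0\pmod p.\end{cases}
\]
The cusps of $\overline{\mathscr{M}}_{1,1}(p)$ are indexed by $\pm P(\mathbf{F}_p)\backslash G(\mathbf{F}_p)$ and split, via the intermediate projection to $\mathscr{M}_{1,1}(\Gamma_0(p))$, into cusps lying over $\infty$ (corresponding to $g\in B(\mathbf{F}_p)$, with ramification index $p$ in $\mathscr{M}_{1,1}(p)\to\mathscr{M}_{1,1}(\Gamma_0(p))$) and cusps lying over $0$ (corresponding to $g\notin B(\mathbf{F}_p)$, unramified there). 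Extracting the constant term of $p^{k+1}f(p\tau)$ in the respective local uniformizers --- and using the modular transformation of $f$ at the cusp $0$ --- yields residues matching the case analysis of $\tilde{\phi}_k$ above, which completes the proof.

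The hard part will be the precise residue calculation at cusps of the second type (lying over the cusp $0$ of $\mathscr{M}_{1,1}(\Gamma_0(p))$). There, the local uniformizer is expressed via the Atkin--Lehner--transformed coordinate $\tilde{\tau}=-1/\tau$, and one must carefully track how the prefactor $p^{k+1}$ in the complex representative combines with the weight-$(k+2)$ modular transformation of $f$ and with the ramification indices in the tower $\mathscr{M}_{1,1}(p)\to\mathscr{M}_{1,1}(\Gamma_0(p))\to\mathscr{M}_{1,1}$ to produce exactly the predicted constant $-B_{k+2}/(k+2)$ (rather than some stray power of $p$). This bookkeeping of cusps, ramification indices, and modular transformations is where the delicate arithmetic of the lemma lives.
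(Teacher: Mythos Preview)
Your strategy differs from the paper's. You want to match residues at all cusps of $\overline{\scrM}_{1,1}(p)$, using that $\mathcal E_k^p$ is a section of the residue map (so that, once both classes are known to lie in its image, equality of residues gives equality of classes). The paper instead passes to the \emph{Deligne} realization: since $\mathcal E_k^{\mathcal D}$ is injective and $\mathcal E_k^p$ is an isomorphism for $k>0$, it suffices to compare $\mathcal E_k^{\mathcal D}(\tilde\phi_k)$ with the explicit Eisenstein-series representative of $\lambda_p^{k,*}\bq_3^*\varphi_{k+2}$. The paper writes the latter as $\frac{-B_{k+2}p^{k+1}}{k+2}\sum_{(c,d)=1}(pc\tau+d)^{-(k+2)}$, splits this sum into two pieces (all coprime pairs versus those with $p\mid c$), and identifies each piece directly via the Huber--Kings formula \cite[Lemma~7.1]{HuK99} for the Hodge realization of Eisenstein symbols. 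This bypasses any cusp-by-cusp bookkeeping.

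Your outline is plausible but incomplete precisely at the point you yourself flag. The representative $p^{k+1}f(p\tau)$ you obtain is the form on one sheet (equivalently, at the cusp over $\infty$ of $\scrM_{1,1}(\Gamma_0(p))$), and there the constant term does give $-p^{k+2}B_{k+2}/(k+2)$, matching $\tilde\phi_k$ for $c\equiv 0$. But for cusps with $c\not\equiv 0\pmod p$ you must transport the form by an element of $\SL_2(\bfZ)$ and extract the constant term of the resulting level-$p$ Eisenstein series in the correct local parameter; you describe this in words but never carry it out, and the cancellation producing exactly $-B_{k+2}/(k+2)$ (with no stray power of $p$) is the whole content. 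A minor point: your reason for the right-hand side lying in the image of $\mathcal E_k^p$ (``pullback preserves the Eisenstein structure'') is too vague---the clean justification is simply that $\mathcal E_k^p$ is an isomorphism for $k>0$, as the paper notes just before (\ref{eq19}). The paper's route via \cite[Lemma~7.1]{HuK99} is what makes the comparison at every cusp automatic rather than a separate computation.
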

\begin{proof}
	Let
	\begin{multline}
	\mathrm{Eis}_k^{\mathcal D}\colon \mathrm{Ind}_{\la -I_2\ra P(\mathbf F_p)}^{G(\mathbf F_p)}\mathbf Q\to \mathrm H^1_{\mathcal D}\left({Y(p)}/\mathbf R,\mathrm{Sym}^k\rmR^1\pi_*\mathbf R(k+1)\right)\\
	\intomap \rmH^1(Y(p)(\bfC),\mathrm{Sym}^k\rmR^1\pi_*\mathbf C)
	\end{multline}
	be the composition of the Eisenstein symbol, the Beilinson regulator
	and the natural projection.
	Since $\mathrm{Eis}_k^{\mathcal D}$ is injective, to prove this lemma, it is sufficient to show
	the equation
	\[
	\mathrm{Eis}_k^{\mathcal D}(\tilde \phi)=\mathrm{Eis}_k^{\mathcal D}(\mathrm{Eis}_k^{p,-1}(	\tilde{\lambda}_p^*\bq_3^*\varphi_{k+2})).
	\]
	
	By the explicit description (\ref{descriptionoflambdap}) of $\lambda_p^k$,
	a $(k+1)$-form on $\frakH\times\bfC^k$ representing the class ${\lambda}_p^{k,*}\bq_3^*\varphi_{k+2}$ is computed as follows:
	\begin{equation}
		\begin{split}
		(\lambda_p^k)^*&\left((2\pi\sqrt{-1})^{k+1}\frac{-B_{k+2}p^{k+1}}{k+2}\sum_{(c,d)=1}\frac{1}{(c\tau+d)^{k+2}}d\tau\wedge dz_1\wedge\cdots\wedge dz_k\right)\\
		&=(2\pi\sqrt{-1})^{k+1}\frac{-B_{k+2}p^{k+1}}{k+2}\sum_{(c,d)=1}\frac{1}{\left(c\frac{-1}{p\tau}+d\right)^{k+2}}d\left(\frac{-1}{p\tau}\right)\wedge d\left(\frac{-1}{\tau}z_1\right)\wedge\cdots\wedge d\left(\frac{-1}{\tau}z_k\right)\\
		&=(2\pi\sqrt{-1})^k\frac{-B_{k+2}p^{k+1}}{k+2}\sum_{(c,d)=1}\frac{1}{(pc\tau+d)^{k+2}}\frac{dq}{q}\wedge dz_1\wedge\cdots\wedge dz_k\\
		&=(2\pi\sqrt{-1})^k\frac{-B_{k+2}}{p(k+2)}\sum_{(c,d)=1,p\nmid c}\frac{1}{(c\tau+d)^{k+2}}\frac{dq}{q}\wedge dz_1\wedge\cdots\wedge dz_k\\
		&\hspace{1cm}+(2\pi\sqrt{-1})^k\frac{-B_{k+2}p^{k+1}}{k+2}\sum_{(c,d)=1,\ p|c}\frac{1}{(c\tau+d)^{k+2}}\frac{dq}{q}\wedge dz_1\wedge\cdots\wedge dz_k.
		\end{split}
	\end{equation}
	On the other hand, by	\cite[p.329]{HuK99},
	a $(k+1)$-form on $\frakH\times \bfC^k=\frakH\times \bfC^k\times \{I_2\}$ representing $\mathrm{Eis}_k^p(\tilde \phi_k)$
	is computed as follows:
	\begin{equation}
	\begin{split}
		\mathrm{Eis}_k^p(\tilde \phi_k)&=\frac{(2\pi\sqrt{-1})^k}{p}\sum_{(c,d)=1}\frac{1}{(c\tau+d)^{k+2}}\tilde{\phi}_k\left(\begin{pmatrix}
			*&*\\
			c&d
			\end{pmatrix}\right)\frac{dq}{q}\wedge dz_1\wedge\cdots\wedge dz_k\\
			&=\frac{(2\pi\sqrt{-1})^k}{p}\sum_{(c,d)=1}\frac{1}{(c\tau+d)^{k+2}}\left(-\frac{p^{k+1}}{k+2}\sum_{\alpha\in \mathbf F_p}B_{k+2}\left(\left\langle\frac{\alpha c}{p}\right\rangle\right)\right)\\
			&\hspace{5cm}\times \frac{dq}{q}\wedge dz_1\wedge\cdots\wedge dz_k\\
			&=\frac{(2\pi\sqrt{-1})^k}{p}\sum_{(c,d)=1,p\mid c}\frac{1}{(c\tau+d)^{k+2}}\left(-\frac{p^{k+2}}{k+2}B_{k+2}\right) \frac{dq}{q}\wedge dz_1\wedge\cdots\wedge dz_k\\
			&+\frac{(2\pi\sqrt{-1})^k}{p}\sum_{(c,d)=1,p\nmid c}\frac{1}{(c\tau+d)^{k+2}}\left(-\frac{p^{k+1}}{k+2}\sum_{\alpha=1}^pB_{k+2}\left(\frac{\alpha}{p}\right)\right)\\
			&\hspace{5cm}\times \frac{dq}{q}\wedge dz_1\wedge\cdots\wedge dz_k.
	\end{split}
	\end{equation}
	Then, by the equation
	\[
	\sum_{\alpha=1}^pB_{k+2}\left(\frac{\alpha}{p}\right)=p^{-(k+1)}B_{k+2}
	\]
	(\cite[Proposition 4.1]{Washington}), we have the conclusion of the lemma.
\end{proof}
\begin{lemma}
	\label{lem4-11}The following equation holds$:$
	\[
	{\boldsymbol q}_{3,*} \lambda_p^*\bq_3^*\varphi_{k+2}=(1+p^{k+1})\varphi_{k+2}.
	\]
\end{lemma}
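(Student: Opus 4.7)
The key observation is that $\bq_{3,*}\lambda_p^{k,*}\bq_3^*$ is precisely the cohomological realization of the $p$-th Hecke operator $T_p$ on $\rmH^{k+1}_{\et}(\scrE^k,\bfQ_p(k+1))$, and the Eisenstein class $\varphi_{k+2}$ is a $T_p$-eigenvector with eigenvalue $\sigma_{k+1}(p)=1+p^{k+1}$.

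To make this rigorous, the plan is as follows. The two maps $\bq_3$ and $\bq_3\circ\lambda_p^k$ from $\scrE_{\Gamma_0(p)}^k$ to $\scrE^k$---the first forgetting the $\Gamma_0(p)$-structure, the second quotienting by the tautological cyclic subgroup of order $p$---realize the Hecke correspondence at $p$ on the universal elliptic curve (and its $k$-fold fiber power). Hence $\bq_{3,*}\lambda_p^{k,*}\bq_3^*$ commutes with Scholl's idempotent $\varepsilon_k$ and, after projection to $\rmH^1_{\et}(\scrM_{1,1},\calV^k(k+1))$, matches the classical $T_p$ via the Eichler--Shimura identification with weight-$(k+2)$ modular cohomology. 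Since $\varphi_{k+2}$ corresponds to the normalized Eisenstein series (whose higher Fourier coefficients are $\sigma_{k+1}(n)$), the classical relation $T_pE_{k+2}=(1+p^{k+1})E_{k+2}$ transports to the stated formula.

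For a direct verification, starting from the explicit de Rham representative of $\lambda_p^{k,*}\bq_3^*\varphi_{k+2}$ derived in the proof of Lemma \ref{lem4-10}, the pushforward $\bq_{3,*}$ on $\Gamma_0(p)$-invariant forms on $\frakH\times\bfC^k$ is realized by summing over coset representatives of $\Gamma_0(p)\backslash\SL_2(\bfZ)$ (namely the identity and $ST^j$ for $j=0,\dots,p-1$, where $S$ and $T$ are the standard generators). Reindexing the Eisenstein sum $\sum_{(c,d)=1}(pc\tau+d)^{-(k+2)}$ according to whether $p\mid d$ (equivalently, whether $(pc,d)$ is a primitive pair or has $\gcd$ equal to $p$) produces one copy of the standard weight-$(k+2)$ Eisenstein sum and a second copy scaled by $p^{k+1}$, giving $(1+p^{k+1})\varphi_{k+2}$ after collecting terms.

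The main obstacle is the bookkeeping of constants arising from the Tate twist $\calV^k(k+1)$ and from the explicit description of $\lambda_p^k$ on the fiber coordinates; this is precisely the point where the author notes that the proof in \cite{Gealy} is incomplete. A careful tracking through the uniformization used here (which, as emphasized in the text, differs from Kato's) is needed to obtain the factor $p^{k+1}$---rather than $p^{-k-1}$ or a shifted variant---with the correct sign and with the Eisenstein form on the right-hand side being $\varphi_{k+2}$ in our normalization.
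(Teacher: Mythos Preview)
Your proposal is correct and takes essentially the same approach as the paper: identify $\bq_{3,*}\lambda_p^{k,*}\bq_3^*$ as the Hecke operator $T_p$ and use that the normalized weight-$(k+2)$ Eisenstein series is a Hecke eigenform whose $T_p$-eigenvalue equals its $p$th Fourier coefficient $\sigma_{k+1}(p)=1+p^{k+1}$. The paper's proof is a two-line appeal to \cite[Lemma~4.5.15]{Miyake} for the eigenvalue-equals-Fourier-coefficient fact, so your explicit coset computation and the cautionary remarks about constants go beyond what the paper records.
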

\begin{proof}According to \cite[Lemma 4.5.15]{Miyake},
	the $p$th Hecke eigen value of normalized Hecke eigen form coincides with its $p$th Fourier coefficient. Therefore, we have the conclusion of the lemma because the $p$th Fourier coefficient of the weight $k$ normalized Eisenstein series for $\SL_2(\bfZ)$ is equal to $\sigma_{k-1}(p)=1+p^{k-1}$.
\end{proof}
\begin{proof}[Proof of Proposition $\ref{prop4-9}$]
	To compute $\bq_*={\boldsymbol q}_{3,*}{\boldsymbol q}_{2,*}{\boldsymbol q}_{1,*}$,
	we compute each ${\boldsymbol q}_{i,*}$ step by step.
	
	First, we compute ${\boldsymbol q}_{1,*}$. By the projection formula, the equation
	\[
	{\boldsymbol q}_{1,*}\left(\mathrm{Eis}_{2i}^p(\phi^{2i,p}_1)\cup \mathrm{Eis}_{2j}^p(\phi^{2j,p}_2)\right)=\mathrm{Eis}_{2i}^p(\phi^{2i,p}_1)\cup {\boldsymbol q}_{1,*}\mathrm{Eis}_{2j}^p(\phi^{2j,p}_2)
	\]
	holds. Then, we have that \[
	{\boldsymbol q}_{1,*}\mathrm{Eis}_{2j}^{2j,p}(\phi^p_2)=-({\boldsymbol q}_2{\boldsymbol q}_3)^*(\varphi_{2j+2})+{\boldsymbol q}_2^*\lambda_p^{k,*}\bq_3^*\varphi_{2j+2}.
	\]
	This equation follows from equations
	\begin{equation*}
		\begin{split}
			\mathrm{Eis}_{2j}^{p,-1}{\boldsymbol q}_1^*{\boldsymbol q}_{1,*}\mathrm{Eis}_{2j}^p(\phi^{2j,p}_2)(g)&=\frac{p^{2j+1}}{2j+2}\sum_{\alpha\in \mathbf F_p^\times,\beta\in \mathbf F_p}B_{2j+2}\left(\left\langle\frac{\alpha c+\beta d}{p}\right\rangle\right)\\
			&=\frac{p^{2j+1}}{2j+2}\left(\sum_{\alpha,\beta\in \mathbf F_p}B_{2j+2}\left(\left\langle\frac{\alpha c+\beta d}{p}\right\rangle\right)
			-\sum_{\beta\in \mathbf F_p}B_{2j+2}\left(\left\langle\frac{\beta d}{p}\right\rangle\right)\right)\\
			&=\frac{pB_{2j+2}}{2j+2}+\tilde \phi_{2j}(g),
		\end{split}
	\end{equation*}
	where $g=\begin{pmatrix}
		a&b\\
		c&d
	\end{pmatrix}$.
	
	Next, we compute ${\boldsymbol q}_{2,*}$.
	By a similar computation as above and by Lemma \ref{lem4-10} again,
	we have the following equation:
	\begin{multline}
		\label{eq21}
		{\boldsymbol q}_{2,*}\left(\mathrm{Eis}_k^{p}(\phi_1^{2i,p})\cup(-({\boldsymbol q}_2{\boldsymbol q}_3)^*(\varphi_{2j+2})+{\boldsymbol q}_2^*\lambda_p^{k,*}\bq_3^*\varphi_{2j+2})\right)\\
		={\boldsymbol q}_{2,*}\mathrm{Eis}_k^{p}(\phi_1^{2i,p})\cup(-{\boldsymbol q}_3^*(\varphi_{2j+2})+\lambda_p^{k,*}\bq_3^*\varphi_{2j+2})\\
		=(p^{2i}\bq_3^*(\varphi_{2i+2})-\lambda_p^*\bq_3^*\varphi_{2i+2})\cup(-{\boldsymbol q}_3^*(\varphi_{2j+2})+\lambda_p^{k,*}\bq_3^*\varphi_{2j+2}).
	\end{multline}
	
	Finally, we compute the push-forward by ${\boldsymbol q}_3$.
	This is computed by using Lemma \ref{lem4-11} as follows:
	\begin{multline}
		{\boldsymbol q}_{3,*}\left((p^{2i}\bq_3^*(\varphi_{2i+2})-\lambda_p^*\bq_3^*\varphi_{2i+2})\cup(-{\boldsymbol q}_3^*(\varphi_{2j+2})+\lambda_p^{k,*}\bq_3^*\varphi_{2j+2})\right)\\
		=-\bq_{3,*}(p^{2i}\bq_3^*(\varphi_{2i+2})\cup{\boldsymbol q}_3^*(\varphi_{2j+2}))+\bq_{3,*}(p^{2i}\bq_3^*(\varphi_{2i})\cup \lambda_p^{k,*}\bq_3^*\varphi_{2j+2}))\\
		+\bq_{3,*}(\lambda_p^*\bq_3^*\varphi_{2i+2}\cup {\boldsymbol q}_3^*(\varphi_{2j+2}))+\bq_{3,k*}(-\lambda_p^*\bq_3^*\varphi_{2i+2}\cup)\lambda_p^{k,*}\bq_3^*\varphi_{2j+2}\\
		=-(1+p)p^{2i}\varphi_{2i+2}\cup \varphi_{2j+2}+p^{2i}(1+p^{2j+1})\varphi_{2i+2}\cup \varphi_{2j+2}\\
		+(1+p^{2i+1})\varphi_{2i+2}\cup \varphi_{2j+2}-T_p\varphi_{2i+2}\cup \varphi_{2j+2}\\
		=(1-T_p+p^{2m+1})(\varphi_{2i+2}\cup \varphi_{2j+2}).
	\end{multline}
	This completes the proof of Proposition \ref{prop4-9}.
\end{proof}
\appendix
\section{Review of $p$-adic weight completions and basic examples}
In this appendix, we give a quick review of the $p$-adic weighted completion introduced by Hain-Matsumoto in \cite{HM03}.
Our explanation is based on the paper \cite{HM04}.
\subsection{General theory}\label{generaltheory}
Let $S$ be a reductive algebraic group over $\bfQ_p$ and suppose that we are given a central cocharacter
$w\colon \bfG_m\to S$.
For each $S$-module $V_0$, the symbol $w^*V_0$ denotes the $\bfG_m$-module $V_0$
defined by $w$ and the original action of $S$ on $V_0$.
Let $\pi$ be a profinite group and let
\[
\rho\colon \pi\to S(\bfQ_p)
\]
be a continuous homomorphism whose image is Zariski dense.
A \emph{weighted module with respect to $(\rho,w)$} is a pair $(V,W_\bullet V)$ where
\begin{itemize}
	\item $V$ is a finite dimensional $\bfQ_p$-vector space equipped with a continuous action of $\pi$,
	\item $W_\bullet V$ is a separated and saturated increasing filtration of $\pi$-modules indexed by $\bfZ$,
\end{itemize}
satisfying the following conditions:
\begin{itemize}
	\item For each $n$, the $\pi$-module $\Gr^W_nV=W_nV/W_{n-1}V$ is the pull-back of an algebraic representation of $S$ via $\rho$. Here, for an algebraic representation $\tau$ of $S$, the pull-back of $\tau$ via $\rho$ is defined to be $\tau\circ \rho$.
	\item For each $n$, the $\bfG_m$-module $w^*\Gr^W_nV$ is a direct sum of the $n$th power $\std_{\bfG_m}^{\otimes n}$ of the standard character $\std_{\bfG_m}$ of $\bfG_m$.
\end{itemize}
Let $\Rep_{\bfQ_p}(\pi,\rho,w)$ denote the category of weighted $\pi$-modules with respect to $(\rho,w)$.
It is easily checked that this category is a $\bfQ_p$-linear neutral Tannakian category and that the forgetful functor
\[
\omega_{\mathrm f}\colon \Rep_{\bfQ_p}(\pi,\rho,w)\to \Vec_{\bfQ_p};\quad (V,W_\bullet V)\mapsto V
\]
is a fiber functor of this category.
The \emph{$p$-adic weighted completion of $\pi$ with respect to $(\rho,w)$}, which is denoted by $\pi^{\wtd}(\rho,w)$,
is defined to be the Tannakian fundamental group of $\Rep_{\bfQ_p}(\pi,\rho,w)$ with the fiber functor $\omega_{\mathrm f}$.
By definition, we have a short exact sequence
\[
1\to \pi^{\wtd,\unip}(\rho,w)\to \pi^{\wtd}(\rho,w)\to S\to 1
\]
of proalgebraic groups over $\bfQ_p$, where $\pi^{\wtd,\unip}(\rho,w)$ is the prounipotent radical of $\pi^{\wtd}(\rho,w)$.
Therefore, the full-subcategory of $\Rep_{\bfQ_p}(\pi,\rho,w)$ consisting of semi-simple
objects is canonically equivalent to the category $\Rep_{\bfQ_p}(S)$ of algebraic representations of $S$
on finite dimensional $\bfQ_p$-vector spaces.

It is easily checked that any morphism $f\colon (V,W_\bullet V)\to (V',W_\bullet V')$ is strict compatible
with the weight filtrations. Therefore, the functor
\[
\Gr^W_\bullet\colon \Rep_{\bfQ_p}(\pi,\rho,w)\to \Rep_{\bfQ_p}(\pi,\rho,w);\quad (V,W_\bullet V)\mapsto \Gr^W_\bullet(V)
\]
is an exact functor.

The induced representation $\rho^{\univ}\colon \pi\to \pi^{\wtd}(\rho,w)(\bfQ_p)$ by the natural functor
\begin{equation}
	\label{naturalfc}
	\Rep_{\bfQ_p}(\pi,\rho,w)\to \Rep_{\bfQ_p}(\pi);\quad (V,W_\bullet V)\mapsto V,
\end{equation}
where the latter is the category of continuous representations of $\pi$ on finite dimensional $\bfQ_p$-vector spaces,
is called the \emph{universal representation}.
According to (\cite[Proposition 7.2]{HM04}), $\rho^{\univ}$ has a Zariski-dense image.
Therefore, the natural functor (\ref{naturalfc}) is fully-faithful.

For a proalgebraic group $G$ over $\bfQ_p$ and a finite dimensional algebraic representation $V$ of $G$, $\rmH^i(G,V)$
denotes the Yoneda extension group $\Ext^i_{\Rep_{\bfQ_p}(G)}(\bfQ_p,V)$ of the category $\Rep_{\bfQ_p}(G)$ of finite dimensional algebraic representations of $G$. The following is the fundamental result of \cite{HM03}:
\begin{theorem}[{\cite[Theorem 8.1]{HM04}}]Let $V$ be a simple $S$-module of weight $n$ with respect to $w$.
	Let \begin{equation}\label{comptcoh}
		\rmH^i(\pi^{\wtd}(\rho,w),V)\to \rmH^i_{\mathrm{cont}}(\pi,V)
	\end{equation}
	be the natural homomorphism induced by the universal representation $\rho^{\univ}$.
	\begin{itemize}
		\item[(1)]If $n$ is negative, then (\ref{comptcoh}) is an isomorphism for $i=1$.
		\item[(2)]For $i=2$, the homomorphism (\ref{comptcoh}) is injective.
	\end{itemize}
\end{theorem}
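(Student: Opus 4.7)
My strategy is to translate (\ref{comptcoh}) into a map of Ext groups via the Tannakian formalism: $\rmH^i(\pi^{\wtd}(\rho,w), V) = \Ext^i_{\Rep_{\bfQ_p}(\pi,\rho,w)}(\bfQ_p, V)$, while $\rmH^i_{\mathrm{cont}}(\pi, V)$ is the Ext group in the category of continuous $\pi$-modules on finite dimensional $\bfQ_p$-vector spaces. The comparison map is then induced by the fully faithful forgetful functor (\ref{naturalfc}). The negativity of $n$ makes the weight filtration on any extension of $\bfQ_p$ by $V$ essentially forced, and this rigidity is the engine of both parts.

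\textbf{Part (1).} For a weighted extension $0\to V\to E\to \bfQ_p\to 0$ the filtration on $E$ is rigid: $W_{n-1}E=0$, $W_kE=V$ for $n\le k\le -1$, and $W_0E=E$. For injectivity, a continuous $\pi$-equivariant section $s\colon \bfQ_p\to E$ satisfies $s(\bfQ_p)\cap W_{-1}E = s(\bfQ_p)\cap V = 0$ by weight reasons, so equipping $\bfQ_p$ with the trivial weight-zero filtration makes $s$ a morphism of weighted modules and produces a splitting in $\Rep_{\bfQ_p}(\pi,\rho,w)$. For surjectivity, any continuous extension $E$ of $\bfQ_p$ by $V$ can be endowed with the above filtration; its graded pieces $V$ and $\bfQ_p$ are pullbacks from simple $S$-modules of weight $n$ and $0$ respectively, satisfying the axioms of a weighted module, so $E$ lies in $\Rep_{\bfQ_p}(\pi,\rho,w)$ and its extension class maps to the prescribed continuous one.

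\textbf{Part (2).} Represent a class $[\xi]\in \Ext^2_{\Rep_{\bfQ_p}(\pi,\rho,w)}(\bfQ_p, V)$ by a 2-extension $0\to V\to E_1\to E_0\to \bfQ_p\to 0$ and set $W=\ker(E_0\to \bfQ_p)$. The short exact sequence $0\to V\to E_1\to W\to 0$ furnishes connecting maps in the weighted and continuous long exact sequences, and $[\xi]$ is the image of the class $[0\to W\to E_0\to \bfQ_p\to 0]$ under the weighted connecting map (and similarly continuously). If $[\xi]$ vanishes in $\rmH^2_{\mathrm{cont}}$, then $[E_0]$ lifts to an element of $\Ext^1_{\mathrm{cont}}(\bfQ_p, E_1)$. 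I would then promote Part (1) to a comparison isomorphism for $\Ext^1(\bfQ_p,-)$ on arbitrary weighted modules having only negative weights, via a dévissage along the weight filtration that reduces to the simple case already handled. Applied to $E_1$ and $W$, a diagram chase between the two long exact sequences lifts $[E_0]$ in the weighted category, forcing $[\xi]=0$ there.

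\textbf{Main obstacle.} The central difficulty is in Part (2): one has to arrange that the intermediate objects $E_1$ and $W$ in the chosen 2-extension have only negative weights, so that the generalized form of Part (1) applies coherently; this is delicate since $W\subset E_0$, and $E_0$ surjects onto the weight-zero module $\bfQ_p$. One resolution is to replace the given 2-extension by an equivalent one in which $W$ has only negative weights by passing to a suitable pullback along a weight-truncation of $E_0$. A more robust alternative is to invoke the Hochschild--Serre spectral sequence for $1\to U\to \pi^{\wtd}(\rho,w)\to S\to 1$ together with the vanishing of $\rmH^{>0}(S,-)$ on algebraic $S$-modules (since $S$ is reductive), and compare it with a parallel decomposition of $\rmH^i_{\mathrm{cont}}(\pi, V)$ through the Zariski closure of $\rho(\pi)$ in $\GL(V)$, which is the approach taken in \cite{HM03}.
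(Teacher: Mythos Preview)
The paper does not give its own proof of this theorem: it is stated in the appendix as ``the fundamental result of \cite{HM03}'' and simply cited. So there is no in-paper argument to compare against, only the reference to Hain--Matsumoto.

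Your Part~(1) is correct and is essentially the standard direct argument. One simplification: for injectivity you need not verify filtration compatibility of a continuous splitting by hand, since the paper has already recorded that the forgetful functor (\ref{naturalfc}) is fully faithful; any continuous $\pi$-map between weighted modules is automatically a morphism in $\Rep_{\bfQ_p}(\pi,\rho,w)$.

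Your Part~(2), however, has a genuine gap in the primary approach. The d\'evissage you invoke---extending Part~(1) to $\Ext^1(\bfQ_p,M)$ for arbitrary negatively-weighted $M$---requires the five lemma on the long exact sequence attached to $0\to M'\to M\to M''\to 0$, and surjectivity of the middle $\Ext^1$-map needs injectivity of the comparison on $\Ext^2(\bfQ_p,M')$. That is precisely what you are trying to prove, so the induction is circular unless you already have the simple case of Part~(2) by an independent argument. Your proposed fix (replace the 2-extension by one whose intermediate $W$ has only negative weights via a ``pullback along a weight-truncation of $E_0$'') does not obviously work: the weight-zero piece of $W$ is a subquotient of $E_0$, not a subobject, so there is no evident truncation in the weighted category that kills it while preserving the Yoneda class.

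The Hochschild--Serre route you mention at the end is the correct and robust approach, and it is indeed how \cite{HM03} proceeds: reduce to $\rmH^i(U,V)^S$ using reductivity of $S$, then exploit the universal (maximal) nature of the pro-unipotent radical $U$ among negatively-weighted extensions to get the $\rmH^1$-isomorphism and $\rmH^2$-injectivity. If you want a self-contained write-up, commit to that line rather than the Yoneda manipulation.
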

Of course, (\ref{comptcoh}) is isomorphism when $i=0$ by the Zariski density of the image
of $\rho^{\univ}$.

\subsection{Mixed Tate modules}\label{appendix2}
In this and the next subsection, we see basic examples of weighted completions
which are used in this paper.

Let $K$ be a finite number field and let
\[
\chi_{\cyc,p}\colon \scrG_K\to \bfQ_p^\times=\bfG_m(\bfQ_p)
\]
be the $p$-adic cyclotomic character.
Let us take the central character $w\colon \bfG_m\to \bfG_m$ to be
$\std^{-2}_{\bfG_m}$.
Then, the category $\Rep_{\bfQ_p}(G_K,\chi_{\cyc,p},\std^{-2}_{\bfG_m})$
is called the category of \emph{$p$-adic Tate modules over $K$} (\cite[Section 6]{HM04}).
Let $\MT(K)$ be the category of mixed Tate motives over $K$.
It is well-known that the $p$-adic \'etale realization functor
\[
\rmR_{\pet}\colon \MT(K)\to \Rep_{\bfQ_p}(\scrG_K);\quad M\mapsto M_{\pet}
\]
induces a natural equivalence between $\MT(K)\otimes\bfQ_p$ and $\Rep_{\bfQ_p}(\scrG_K,\chi_{\cyc,p},\std)$.
For shortening notation, put \[
\MT_p(K):=\Rep_{\bfQ_p}(\scrG_K,\chi_{\cyc,p},\std^{-2}).
\]

Here is the \emph{restricted ramification} version of this category.
Let $\Sigma_K$ be the set of all finite places of $K$ and let $\Sigma$ be a finite subset of $\Sigma_K$.
Then, $\MT_p(\calO_{K,\Sigma})$ denotes the full subcategory of
$\MT_p(K)$ consisting the objects $V$ such that
unramified or crystalline at any $v\in\Sigma_K\setminus \Sigma$ for $v\nmid p$ or $v\mid p$, respectively.
Since the category of crystalline representations forms an abelian category
(\cite{Fo2}), this category is a neutral Tannakian subcategory of $\MT_p(K)$.
When $\Sigma=\emptyset$, $\MT_p(\calO_{K,\emptyset})$ is denoted by$\MT_p(\calO_K)$.
\subsection{Crystalline completion of $\pi_1^{\et}(\scrM_{1,1},v)$}\label{crystallinecomple}
In this subsection, we let the profinite group $\pi$ be $\pi_1^{\et}(\scrM_{1,1},v)$,
define $\rho\colon \pi\to \GL_2(\bfQ_p)$ to be the representation $V_0:=\calV^1_v$,
and take $w\colon \bfG_m\to \GL_2$ to be the diagonal embedding.
Recall that $\scrM_{1,1}$ is the moduli stack of elliptic curves,
$v=\partial/\partial q$ is the standard tangential base point.
The $\bfQ_p$-smooth sheaf $\calV^1$ is defined to be $\rmR^1\pi_*(\bfQ_p)$,
where $\pi\colon \scrE\to \scrM_{1,1}$ is the universal elliptic curve.
Then, the representation $V_0$ of $\pi_1^{\et}(\scrM_{1,1},v)=\widehat{\SL_2(\bfZ)}\rtimes \scrG_{\bfQ}$ is naturally isomorphic to $\bfQ_p\oplus \bfQ_p(-1)$.

By definition, $p$-adic weighted module of $\pi=\pi_1^{\et}(\scrM_{1,1},v)$ with respect to
$(\rho,w)$ is a finite dimensional $\pi$-module $V$ with $W_\bullet V$
such that $\Gr^W_n(V)$ is isomorphic to a direct sum of pull-backs of the $\GL_2$-modules
\[
\Sym^m(\std_{\GL_2})\otimes{\det}^r,\quad m+2r=n
\]
by $\rho$.
The category $\Rep_{\bfQ_p}^{\emptyset}(\pi,\rho,w)$ is defined to be the full subcategory of
$\Rep_{\bfQ_p}(\pi,\rho,w)$ consisting of objects $V$ satisfying the following unramified condition:
\begin{center}
	\textbf{(Ur)} The $\scrG_{\bfQ}$-module $V$ defined to be the pull-back via $\scrG_{\bfQ}\intomap \widehat{\SL_2(\bfZ)}\rtimes\scrG_{\bfQ}= \pi$ is unramified outside $p$
	and crystalline at $p$.\end{center}
The following definition is due to \cite[Subsection 12.1, 12.3]{HM15}.
\begin{definition}The Tannakian fundamental group of $\Rep_{\bfQ_p}^{\emptyset}(\pi,\rho,w)$
	with the base point $\omega_{\mathrm f}$ is called the \emph{crystalline completion of $\pi_1^{\et}(\scrM_{1,1},v)$ with respect to $(\rho,w)$}.
\end{definition}
In this appendix, the symbol $\pi^{\crys}(\rho,w)$ denotes the crystalline completion above.
Similar to the weighted completion, we have a short exact sequence
\[
1\to \pi^{\crys,\unip}(\rho,w)\to \pi^{\crys}(\rho,w)\xrightarrow{\pr} \GL_{2,\bfQ_p}\to 1
\]
of proalgebraic groups over $\bfQ_p$, where $\pi^{\crys,\unip}(\rho,w)$ denotes the prounipotent radical of
$\pi^{\crys}(\rho,w)$.
\begin{proposition}
	\label{propA1}The fiber functor $v$ induces an equivalence
	\[
	\bv \colon \ME_p(\scrM_{1,1})\isom \Rep_{\bfQ_p}^{\emptyset}(\pi,\rho,w)
	\]
	of $\bfQ_p$-linear Tannakian categories.
\end{proposition}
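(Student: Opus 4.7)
The plan is to prove this equivalence by checking that the fiber functor $v$ is well-defined between the two categories, then establishing full faithfulness through the standard Tannakian description of smooth $\bfQ_p$-sheaves on a connected stack, and finally essential surjectivity by reversing the dictionary. The proof is essentially a careful unwinding of definitions once the right translation between sheaf-theoretic and $\GL_2$-representation-theoretic conditions is set up.

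First I would verify well-definedness. Given $(\calF, W_\bullet\calF) \in \ME_p(\scrM_{1,1})$, set $V := \calF_v$ and $W_\bullet V := (W_\bullet\calF)_v$. Since $\calV^m_v = \Sym^m(V_0)$ with $V_0 = \bfQ_p \oplus \bfQ_p(-1)$ and $\det(V_0) = \bfQ_p(-1)$, the Tate twist $(r)$ on $\calV^{n+2r}$ is realized on the fiber as a twist by $\det^{-r}$ through $\rho$. Therefore each summand of $\Gr^W_n\calF_v \cong \bigoplus_r \Sym^{n+2r}(V_0)(r)^{\oplus n_r}$ is the pullback along $\rho$ of an algebraic $\GL_2$-representation $\Sym^{n+2r}(\std)\otimes\det^{-r}$, and each such irreducible is pure of weight $n$ under the diagonal cocharacter $w$. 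The unramified/crystalline condition (Ur) on $\bv^*\calF$ is verbatim the (Ur) condition on $V|_{\scrG_\bfQ}$, so $(V, W_\bullet V) \in \Rep_{\bfQ_p}^{\emptyset}(\pi,\rho,w)$.

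Next, for full faithfulness I would invoke the standard Tannakian equivalence between $\mathsf{Sm}_p(\scrM_{1,1}/\bfQ)$ and continuous $\bfQ_p$-representations of $\pi = \pi_1^{\et}(\scrM_{1,1}, v)$, which holds because $\scrM_{1,1}$ is a geometrically connected Deligne--Mumford stack and $v$ is a (tangential) geometric base point. Under this equivalence sub-sheaves correspond to $\pi$-stable subspaces, so morphisms of mixed elliptic sheaves respecting weight filtrations correspond bijectively to morphisms of weighted $\pi$-modules. For essential surjectivity, given $(V, W_\bullet V)$ in the target category, the Tannakian equivalence produces a unique smooth sheaf $\calF$ with $\calF_v = V$, and the $\pi$-stable filtration $W_\bullet V$ yields sub-sheaves $W_\bullet\calF$. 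Reversing the dictionary from the first step, the condition that $\Gr^W_n V$ is a direct sum of pullbacks of $\GL_2$-irreducibles pure of weight $n$ precisely forces $\Gr^W_n\calF \cong \bigoplus_r \calV^{n+2r}(r)^{\oplus n_r}$, while (Ur) on $V|_{\scrG_\bfQ}$ gives (Ur) on $\bv^*\calF$.

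The main obstacle I anticipate is purely bookkeeping: maintaining consistent weight and Tate-twist conventions between the sheaf-theoretic side and the algebraic $\GL_2$-representation-theoretic side, in particular checking that the Tate twist $(r)$ and the algebraic character $\det^{-r}$ really do align under $\rho$ (using $\det(V_0) = \bfQ_p(-1)$), and that the cocharacter $w$ assigns the expected weight $n$ to $\Sym^{n+2r}(\std)\otimes\det^{-r}$. Once this translation table is explicit, both well-definedness and essential surjectivity reduce to matching the list of semisimple objects on both sides, and the remaining content of the proposition is the Tannakian equivalence at the tangential base point, which is a standard input.
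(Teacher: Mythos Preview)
Your proposal is correct and follows essentially the same route as the paper's proof: both arguments reduce to the standard Tannakian equivalence $\mathsf{Sm}_p(\scrM_{1,1}/\bfQ)\simeq \Rep_{\bfQ_p}(\pi_1^{\et}(\scrM_{1,1},v))$ at the tangential base point, observe that the forgetful functors from the two filtered categories into this ambient equivalence are fully faithful, and then match the semisimple building blocks $\calV^{m}(-r)\leftrightarrow \Sym^m(V_0)\otimes\det^r$ to obtain essential surjectivity. Your write-up is slightly more explicit about well-definedness and the $\det^{-r}\leftrightarrow (r)$ bookkeeping, while the paper packages these into a commutative square; the content is the same.
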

\begin{proof}
	By definition, we have the following diagram
	\[
	\xymatrix{
		\ME_p(\scrM_{1,1})\ar[rr]^{\bv}\ar[d]& & \Rep_{\bfQ_p}^{\emptyset}(\pi,\rho,w)\ar[d]\\
		\mathbf{Sm}_{p}(\scrM_{1,1})\ar[rr]^{\cong}& &\Rep_{\bfQ_p}(\pi_1^{\et}(\scrM_{1,1})),
	}
	\]
	commutative up to natural equivalence.
	As the horizontal functors are fully-faithful, the functor $\bv$ is also fully-faithful.
	
	Let us show the essential surjectivity.
	Let $(V,W_\bullet V)$ be an object of $\Rep_{\bfQ_p}^{\emptyset}(\pi,\rho,w)$.
	Then, as the lower functor in the square above is an equivalence of the category,
	we have a smooth $\bfQ_p$-sheaves $W_\bullet\calF$ whose fibers are isomorphic to $W_\bullet V$.
	Then, since \[
	\Gr^W_n(V)\cong \bigoplus_{m+2r=n}\left(\Sym^m(V_0)\otimes{\det}^{r}\right)^{\oplus i_{n,r}},\quad i_{n,r}\in \bfZ_{\geq 0},
	\]
	we have
	\[
	\Gr^W_n(\calF)\cong \bigoplus_{m+2r=n}\calV^m(-r)^{\oplus i_{n,r}}.
	\]
	Of course, the fiber of $(\calF,W_\bullet\calF)$ at $v$ is isomorphic to $(V,W_\bullet V)$.
	Therefore, $(\calF,W_\bullet\calF)$ is an object of $\ME_p(\scrM_{1,1})$ and we have the conclusion of the proposition.
\end{proof}
The basic properties of the crystalline completion was proved in \cite[Section 13]{HM15}.
Let
\begin{equation}
	\label{eqa1}
	\rmH^i(\pi^{\crys}(\rho,w),\Sym^m(V_0)(r))\to \rmH^i(\pi_1^{\et}(\scrM_{1,1},v),\Sym^m(V_0)(r))
\end{equation}
be a natural homomorphism induced by $\Rep_{\bfQ_p}^{\emptyset}(\pi,\rho,w)\to \Rep_{\bfQ_p}(\pi_1^{\et}(\scrM_{1,1},v))$.
Here, for any $\GL_2$-module $M$ and an integer $r$, $M(r)$ denotes the $\GL_2$-module $M\otimes\det^{\otimes(-r)}$ (of course, this is regarded as a $\pi^{\crys}(\rho,w)$-module).
Let us summarizes their results as follows:
\begin{proposition}[{\cite[Proposition 13.1, Theorem 13.3]{HM15}}]
	\label{propA0}Let $m,r$ be integers such that $m$ is non-negative.
	\begin{itemize}
		\item[(1)]When $i=0$, then the homomorphism (\ref{eqa1}) is always an isomorphism.
		\item[(2)]When $i=1$, (\ref{eqa1}) is isomorphism if $m-2r<-2$.
		If $m-2r\geq -2$, then the source group vanishes.
		\item[(3)]When $i=2$, (\ref{eqa1}) induces an injection
		\[
		\rmH^2(\pi^{\crys}(\rho,w),\Sym^m(V_0)(r))\hookrightarrow \rmH^1_{\mathrm f}(\bfQ,H^1_{\et}(\scrM_{1,1,\overline{\bfQ}},\calV^m(r))).
		\]
	\end{itemize}
\end{proposition}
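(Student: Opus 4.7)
The plan is to apply two Hochschild--Serre spectral sequences and reduce the assertions to weight analysis on the Lie algebra $\fraku^{\crys}$ together with the Eichler--Shimura / Drinfeld--Manin decomposition of $\rmH^1_{\et}(\scrM_{1,1,\overline{\bfQ}}, \calV^m(r))$. On the source side, I apply Hochschild--Serre to
\[
1 \to U^{\crys} \to \pi^{\crys}(\rho,w) \to \GL_{2,\bfQ_p} \to 1,
\]
which degenerates at $E_2$ because $\GL_{2,\bfQ_p}$ is reductive on algebraic representations, yielding
\[
\rmH^i(\pi^{\crys}(\rho,w), \Sym^m(V_0)(r)) \cong \bigl(\rmH^i(\fraku^{\crys}) \otimes \Sym^m(V_0)(r)\bigr)^{\GL_2},
\]
where I use that $U^{\crys}$ acts trivially on representations pulled back from $\GL_2$. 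On the target side, the Leray spectral sequence for $\scrM_{1,1}/\bfQ \to \spec(\bfQ)$, together with $\rmH^0(\scrM_{1,1,\overline{\bfQ}}, \calV^m) = 0$ for $m>0$ (no $\SL_2(\bfZ)$-invariants in $\Sym^m$), reduces $\rmH^i(\pi_1^{\et}(\scrM_{1,1}, v), \Sym^m V_0(r))$ for $i\leq 2$ to Galois cohomology of $\rmH^1_{\et}(\scrM_{1,1,\overline{\bfQ}}, \calV^m(r))$.

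Part (1) is immediate: the universal representation $\rho^{\univ}$ has Zariski-dense image by \cite[Proposition 7.2]{HM04}, so $(\Sym^m V_0(r))^{\pi_1^{\et}}=(\Sym^m V_0(r))^{\pi^{\crys}}=(\Sym^m V_0(r))^{\GL_2}$. The vanishing in part (2) when $m-2r\geq 0$ is a pure weight argument: as shown right after Definition \ref{dfn1-3}, $\fraku$ (hence $\fraku^{\crys}$) has strictly negative global weights, so $\rmH^1(\fraku^{\crys})\cong\rmH_1(\fraku^{\crys})^\vee$ has strictly positive weights, while $\Sym^m V_0(r)$ has weight $m-2r\geq 0$; no $\GL_2$-invariant can then appear in the tensor product.

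For the isomorphism in part (2) with $m-2r<0$, I combine Proposition \ref{prop1-5}'s explicit decomposition of $\rmH_1(\fraku)$ into a ``mixed Tate'' summand $\prod_m \rmH^1(\bfQ, \bfQ_p(2m+1))^\vee(2m+1)$ and an ``Eisenstein'' summand $\prod_n \mathrm{E}_{2n}^\vee \otimes \Sym^{2n}(V)(2n+1)$, with the Drinfeld--Manin decomposition of $\rmH^1_{\et}(\scrM_{1,1,\overline{\bfQ}}, \calV^m(r))$ into an Eisenstein line $\mathrm{E}_{m+2}(-m-1)$ and a cuspidal part. Taking $\GL_2$-invariants after tensoring with $\Sym^m V_0(r)$ pairs the Eisenstein summand with the Eisenstein line and (for $m=0$) the mixed Tate summand with Soulé's computation of $\rmH^1(\bfQ,\bfQ_p(-r))$. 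The crystalline-at-$p$ and unramified-outside-$p$ conditions built into $\Rep_{\bfQ_p}^\emptyset(\pi,\rho,w)$ must be matched against the Bloch--Kato finite part via the good reduction of $\scrM_{1,1}$ away from $p$ and Faltings' crystalline comparison at $p$.

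For part (3), the Leray edge map gives an injection $\rmH^2_{\et}(\scrM_{1,1}/\bfQ, \calV^m(r)) \hookrightarrow \rmH^1(\bfQ, \rmH^1_{\et}(\scrM_{1,1,\overline{\bfQ}}, \calV^m(r)))$ for $m>0$ (the $\rmH^2(\bfQ,\rmH^0)$-term vanishes), and the image of $\rmH^2(\pi^{\crys}(\rho,w), \Sym^m V_0(r))$ classifies extensions by representations that are, by definition, unramified outside $p$ and crystalline at $p$, hence its image lies in $\rmH^1_{\mathrm f}$. The main obstacle is the crystalline matching in the isomorphism of part (2): one must carefully track the local conditions defining $\Rep_{\bfQ_p}^\emptyset(\pi,\rho,w)$ through the Leray spectral sequence and verify that they cut out exactly the Bloch--Kato local conditions on the Galois cohomology of $\rmH^1_{\et}(\scrM_{1,1,\overline{\bfQ}}, \calV^m(r))$; this is the technical core of Hain--Matsumoto's Section 13 of \cite{HM15}.
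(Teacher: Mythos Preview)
Your spectral-sequence framework for parts (1) and (2) is essentially what is done in \cite{HM15}, and the weight argument for the vanishing in (2) is correct. However, your treatment of part (3) has a genuine gap.

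You argue that the Leray edge map gives an injection $\rmH^2(\pi_1^{\et}(\scrM_{1,1},v),\Sym^m V_0(r))\hookrightarrow \rmH^1(\bfQ,\rmH^1_{\et}(\scrM_{1,1,\overline{\bfQ}},\calV^m(r)))$, which is fine. But the statement asserts that $\rmH^2(\pi^{\crys}(\rho,w),\Sym^m V_0(r))$ injects, and you never address the injectivity of
\[
\rmH^2(\pi^{\crys}(\rho,w),\Sym^m V_0(r))\longrightarrow \rmH^2(\pi_1^{\et}(\scrM_{1,1},v),\Sym^m V_0(r)).
\]
Since $\Rep_{\bfQ_p}^{\emptyset}(\pi,\rho,w)$ is a full subcategory of $\Rep_{\bfQ_p}(\pi,\rho,w)$, the crystalline completion $\pi^{\crys}(\rho,w)$ is a \emph{quotient} of the weighted completion $\pi^{\wtd}(\rho,w)$, and inflation on $\rmH^2$ is not automatically injective. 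The paper handles this in two steps: first, the general theory (the theorem at (\ref{comptcoh})) gives injectivity of $\rmH^2(\pi^{\wtd}(\rho,w),-)\to \rmH^2(\pi_1^{\et},-)$; second, injectivity of $\rmH^2(\pi^{\crys}(\rho,w),-)\to \rmH^2(\pi^{\wtd}(\rho,w),-)$ requires a separate ``pulling lemma'' \cite[Lemma 13.4]{HM15}, showing that any $2$-extension in $\Rep_{\bfQ_p}^{\emptyset}$ that splits in $\Rep_{\bfQ_p}(\pi,\rho,w)$ already splits in $\Rep_{\bfQ_p}^{\emptyset}$. This is a nontrivial input that your sketch omits.

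Separately, your justification that the image lands in $\rmH^1_{\mathrm f}$ (``extensions are, by definition, unramified outside $p$ and crystalline at $p$'') is only a heuristic. The condition \textbf{(Ur)} is imposed on the $\scrG_{\bfQ}$-module obtained via the tangential base point $v$, not directly on the Galois cohomology class of $\rmH^1_{\et}(\scrM_{1,1,\overline{\bfQ}},\calV^m(r))$. The paper bridges this by invoking Olsson's theorem \cite{Ol} that the ring of regular functions of the relative pro-unipotent completion of $\pi_1^{\et}(\scrM_{1,1,\overline{\bfQ}},v)$ is ind-crystalline at $p$; this is what allows the $\rmH^2(\pi^{\crys})\to\rmH^1(\bfQ,\rmH^1_{\et})$ map to factor through $\Ext^1$ in the category $\Rep_{\bfQ_p}^{\emptyset}(\scrG_{\bfQ})$, and hence through $\rmH^1_{\mathrm f}$. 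You correctly sense that this ``crystalline matching'' is the technical core, but you locate it in part (2) rather than part (3), and you do not name the two specific tools (the pulling lemma and Olsson's comparison) that actually close the argument.
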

\subsection{On bisplittings}\label{bisplit}
In this subsection, we show that there is a functorial bigrading on every object of $ \Rep_{\bfQ_p}^{\emptyset}(\pi,\rho,w)$. This fact was previously established in \cite[Appendix B]{HM15}.
We give a slightly different proof of theirs in this subsection.

Let $\rho\colon \pi_1^{\et}(\scrM_{1,1},v)\to \GL_2(\bfQ_p)$ be the same as in the previous subsection. Then, we have the commutative diagram
\[
\xymatrix{
	\pi_1^{\et}(\scrM_{1,1},v)\ar[d]\ar[rr]^{\rho}& &\GL_2(\bfQ_p)\ar[d]^{\det^{-1}}\\
	\scrG_{\bfQ}\ar[rr]^{\chi_{\cyc,p}}& &\bfG_m(\bfQ_p),
}
\]
where the left horizontal morphism is induced one by the structural morphism of $\scrM_{1,1}$ over $\bfQ$.
This defines a natural functor
\[
\iota\colon \MT_p(\bfZ)\to \Rep_{\bfQ_p}^{\emptyset}(\pi,\rho,w)
\]
of Tannakian categories. On the other hand, by defining $c\colon \bfG_m\to \GL_2$ by $c(\sigma)=\begin{pmatrix}
	1&0\\
	0&\sigma^{-1}
\end{pmatrix}$, we have the following commutative diagram:
\[
\xymatrix{
	\pi_1^{\et}(\scrM_{1,1},v)\ar[rr]^{\rho}& &\GL_2(\bfQ_p)\\
	\scrG_{\bfQ}\ar[u]\ar[rr]^{\chi_{\cyc,p}}& &\bfG_m(\bfQ_p)\ar[u]^{c}.
}
\]
Here, the left horizontal homomorphism is defined by the splitting $\pi_1^{\et}(\scrM_{1,1},v)=\widehat{\SL_2(\bfZ)}\rtimes \scrG_{\bfQ}$ induced by $v$.
Therefore, we also have the functor
\[
\bv_*\colon \Rep_{\bfQ_p}^{\emptyset}(\pi,\rho,w)\to \MT_p(\bfZ).
\]
To distinguishes weight filtrations, we use the notations $W_\bullet$ and $M_\bullet$ for weights of object of $\Rep_{\bfQ_p}^{\emptyset}(\pi,\rho,w)$ and $\MT_p(\bfZ)$, respectively.
For each object $V$ of $\Rep_{\bfQ_p}^{\emptyset}(\pi,\rho,w)$, $M_\bullet V$ denotes
$M_{\bullet}\bv_*(V)$ for simplicity.
\begin{lemma}\label{lema-2}
	The two functors \[
	\Rep_{\bfQ_p}^{\emptyset}(\pi,\rho,w)\to \Rep_{\bfQ_p}^{\emptyset}(\pi,\rho,w)^{\mathrm{
			ss}};\quad V\mapsto \Gr^W_\bullet(V)
	\]
	and
	\[
	\Rep_{\bfQ_p}^{\emptyset}(\pi,\rho,w)\to \MT_p(\bfZ)^{\mathrm{ss}};\quad V\mapsto \Gr^M_\bullet(\bv_*(V))
	\]
	are exact functors. Here, the upper subscription $\mathrm{ss}$ means the full-subcategory consisting of semi-simple objects.
\end{lemma}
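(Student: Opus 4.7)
The plan is to treat both functors in parallel, reducing each to the general fact that morphisms of weighted modules are strictly compatible with the weight filtration, so the associated graded functor is exact. This principle was already recorded in Subsection~\ref{generaltheory} for $\Rep_{\bfQ_p}(\pi,\rho,w)$, and since $\MT_p(\bfZ)$ is itself a weighted Tannakian category in the sense of Subsection~\ref{appendix2}, the same principle applies to it as well. What remains is therefore to check that these exactness properties survive on the relevant subcategories and compositions.

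For the first functor $V\mapsto\Gr^W_\bullet V$, my first step is to verify that $\Rep_{\bfQ_p}^{\emptyset}(\pi,\rho,w)$ is an abelian full subcategory of $\Rep_{\bfQ_p}(\pi,\rho,w)$. This uses Fontaine's theorem that the crystalline condition is stable under subobjects, quotients, and extensions, together with the analogous stability of the condition of being unramified outside $p$. A short exact sequence in $\Rep_{\bfQ_p}^{\emptyset}(\pi,\rho,w)$ is then short exact in the ambient category, where $\Gr^W_\bullet$ is exact by the observation in Subsection~\ref{generaltheory}. The last step is to verify that the graded pieces stay in the subcategory: this is clear, since $\Gr^W_nV$ is a direct sum of Tate-twisted symmetric powers $\Sym^m(V_0)(r)$ with $m+2r=n$, and each such summand is a subquotient of a tensor power of $V_0=\rmH^1_{\et}(\scrE_v,\bfQ_p)$, hence satisfies~\textbf{(Ur)}.

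For the second functor $V\mapsto \Gr^M_\bullet V$, my strategy is to factor it as
\[
\Rep_{\bfQ_p}^{\emptyset}(\pi,\rho,w)\xrightarrow{v_*}\MT_p(\bfZ)\xrightarrow{\Gr^M_\bullet}\MT_p(\bfZ)^{\mathrm{ss}}.
\]
The functor $v_*$ is a morphism of neutral Tannakian categories, arising from the splitting $\pi_1^{\et}(\scrM_{1,1},v)=\widehat{\SL_2(\bfZ)}\rtimes\scrG_{\bfQ}$ induced by the tangential base point $v$, so it is exact; one also needs to note that the Galois representation underlying $v_*V$ is crystalline at $p$ and unramified outside $p$, which is immediate from \textbf{(Ur)}. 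The second factor is exact because $\MT_p(\bfZ)$ is itself a weighted completion, and its weight filtration is therefore strict by the same general principle of Subsection~\ref{generaltheory}. The composition of exact functors is exact.

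The main obstacle in this argument is a bookkeeping matter rather than a genuine difficulty: it is the verification that the unramified/crystalline subcategories are abelian so that the strictness of weight filtrations inherited from the ambient weighted completion still yields an exact associated graded functor on the restricted category. Once this is in place, both parts of the lemma follow immediately from the general weighted completion theory.
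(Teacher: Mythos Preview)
Your proposal is correct and follows essentially the same route as the paper's proof: exactness of $\Gr^W_\bullet$ comes from the general strictness of weight filtrations in weighted completions (Subsection~\ref{generaltheory}), and exactness of $\Gr^M_\bullet$ is obtained by factoring through the exact functor $v_*$ followed by $\Gr^M_\bullet$ on $\MT_p(\bfZ)$. You supply more detail than the paper does---in particular the verification that $\Rep_{\bfQ_p}^{\emptyset}(\pi,\rho,w)$ is an abelian subcategory and that the targets land where claimed---but the underlying argument is the same.
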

\begin{proof}The exactness of the first assertion is follows from the general theory of weighted completion.
	See Subsection \ref{generaltheory}. The second exactness follows from the fact above and the exactness of $v_*$.
\end{proof}
Let us define the functor
\[
\Gr^M_\bullet\Gr^W_\bullet\colon \Rep_{\bfQ_p}^{\emptyset}(\pi,\rho,w)\to\Vec_{\bfQ_p}
\]
by
\[
\Gr^M_\bullet\Gr^W_\bullet(V)=\omega_{\mathrm f}\circ \Gr^M_\bullet\circ\bv_*\circ\Gr^W_\bullet(V).
\]
Then, by Lemma \ref{lema-2}, this functor is exact.
As it is easily checked that this functor is compatible with tensor products, we have the following proposition:
\begin{proposition}
	\label{propa3}
	The functor $\Gr^M_\bullet\Gr^W_\bullet$ is a fiber functor of $ \Rep_{\bfQ_p}^{\emptyset}(\pi,\rho,w)$.
\end{proposition}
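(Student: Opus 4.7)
The plan is to verify the defining properties of a fiber functor on a neutral $\bfQ_p$-linear Tannakian category: $\bfQ_p$-linearity, exactness, faithfulness, and symmetric monoidality. $\bfQ_p$-linearity is automatic, since each of $\Gr^W_\bullet$, $\Gr^M_\bullet$, and $\omega_{\mathrm f}$ is $\bfQ_p$-linear in an evident manner.

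For exactness, I would simply stack the two parts of Lemma \ref{lema-2}. That lemma gives exactness of $\Gr^W_\bullet\colon \Rep_{\bfQ_p}^{\emptyset}(\pi,\rho,w) \to \Rep_{\bfQ_p}^{\emptyset}(\pi,\rho,w)^{\mathrm{ss}}$ and exactness of $\Gr^M_\bullet\colon \Rep_{\bfQ_p}^{\emptyset}(\pi,\rho,w) \to \MT_p(\bfZ)^{\mathrm{ss}}$. Applied to the semisimple object $\Gr^W_\bullet V$ (pushed via $v_*$ to $\MT_p(\bfZ)$), the functor $\Gr^M_\bullet$ is still exact, and $\omega_{\mathrm f}$ on $\Vec_{\bfQ_p}$ is trivially exact, so the composite $\Gr^M_\bullet\Gr^W_\bullet$ is exact. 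Faithfulness is then the observation that if $\Gr^M_\bullet\Gr^W_\bullet V = 0$ then the semisimple object $\Gr^W_\bullet V$ has vanishing $M$-associated graded, and since $M_\bullet$ is separated and exhaustive this forces $\Gr^W_\bullet V = 0$; the same separation and exhaustiveness applied to $W_\bullet$ then gives $V = 0$.

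For tensor compatibility, the key input is that both $W_\bullet$ and $M_\bullet$ are weight filtrations in the Tannakian sense, arising from central cocharacters on the respective Tannakian fundamental groups. Consequently, for any pair $V, V'$ one has the natural equality $W_n(V\otimes V') = \sum_{i+j=n} W_iV \otimes W_jV'$ and hence a canonical isomorphism $\Gr^W_n(V\otimes V') \cong \bigoplus_{i+j=n}\Gr^W_i V \otimes \Gr^W_j V'$; the functor $v_*$ is itself a tensor functor, and the same formula for $M_\bullet$ on $\MT_p(\bfZ)^{\mathrm{ss}}$ then gives the desired natural isomorphism $\Gr^M_\bullet\Gr^W_\bullet(V\otimes V') \cong \Gr^M_\bullet\Gr^W_\bullet V \otimes_{\bfQ_p} \Gr^M_\bullet\Gr^W_\bullet V'$. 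Compatibility with unit, associativity, and commutativity constraints then follows formally.

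The only real subtlety—hardly an obstacle—is that $W_\bullet$ and $M_\bullet$ are a priori different filtrations on the underlying vector space of a given object, and one should check that iterating them makes sense. This is clean because $\Gr^W_\bullet V$ lies in the semisimple subcategory, where $v_*$ carries it to a semisimple mixed Tate module whose $M$-decomposition is canonical and functorial; no further compatibility between the two filtrations is required to define the double graded functor. With exactness, faithfulness, and monoidality all in place, the proposition follows.
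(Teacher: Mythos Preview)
Your proposal is correct and follows the same approach as the paper: the paper's argument consists of exactly two sentences preceding the proposition, invoking Lemma~\ref{lema-2} for exactness and asserting that tensor compatibility is ``easily checked.'' You have simply spelled out the details (including faithfulness, which the paper leaves implicit), so there is nothing to add.
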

\begin{remark}
	For each $V\in\Obj( \Rep_{\bfQ_p}^{\emptyset}(\pi,\rho,w))$, we can impose two filtrations $W_\bullet,M_\bullet$ on the underlying $\bfQ_p$-vector space by the obvious way.
	The functor above clearly coincides the correspondence $V\mapsto  \Gr^M_\bullet\Gr^W_\bullet(V)$
	as usual. Note that we have natural isomorphisms
	\[
	\Gr^M_i\Gr^W_j(V)\xleftarrow{\sim}\frac{M_i(V)\cap W_j(V)}{M_{i-1}(V)\cap W_j(V)+M_i(V)\cap W_{j-1}(V)}\isom  \Gr^W_j\Gr^M_i(V),
	\]
	which are functorial in $V$. Therefore, we also can regard $\Gr^W_\bullet\Gr^M_\bullet(V)$ as a fiber functor on $ \Rep_{\bfQ_p}^{\emptyset}(\pi,\rho,w)$.
	This is the functor studied in \cite[Appendix B]{HM15}.
\end{remark}
\begin{proposition}
	\label{propa4}
	There exists an isomorphism
	\[
	\Gr^M_\bullet\Gr^W_\bullet\cong \omega_{\mathrm f}
	\]
	of fiber functors of $\Rep_{\bfQ_p}^{\emptyset}(\pi,\rho,w)$.
\end{proposition}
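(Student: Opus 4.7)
The plan is to construct the desired isomorphism explicitly by producing cocharacters that split both filtrations simultaneously. First, I would obtain a Levi splitting $\sigma\colon \GL_{2,\bfQ_p}\hookrightarrow \pi^{\crys}(\rho,w)$ of the surjection $\pi^{\crys}(\rho,w)\twoheadrightarrow \GL_{2,\bfQ_p}$; such a splitting exists by the Hochschild--Mostow theorem on pro-algebraic groups in characteristic zero, since the kernel $\pi^{\crys,\unip}(\rho,w)$ is pro-unipotent. Precomposing $\sigma$ with the central weight cocharacter $w\colon \bfG_m\to \GL_{2,\bfQ_p}$ yields a lift $\tilde w\colon \bfG_m\to \pi^{\crys}(\rho,w)$.

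Next, for each $V\in \Rep_{\bfQ_p}^{\emptyset}(\pi,\rho,w)$, the cocharacter $\tilde w$ acts on $V$ and decomposes it into weight subspaces $V=\bigoplus_n V[n]$. The defining axiom that $w^{\ast}\Gr^W_n V$ is pure of weight $n$ implies that the composite $V[n]\hookrightarrow W_n V\twoheadrightarrow \Gr^W_n V$ is an isomorphism for each $n$. Summing over $n$ produces a tensor-functorial isomorphism $V\isom \Gr^W_\bullet V$ depending only on the choice of $\sigma$.

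Then, $\Gr^W_\bullet V$ is semi-simple in $\Rep_{\bfQ_p}^{\emptyset}(\pi,\rho,w)$, and its image in $\MT_p(\bfZ)$ under $v_\ast$ is a semi-simple Tate module, i.e.\ a direct sum of Tate twists $\bfQ_p(j)$. On such an object the filtration $M_\bullet$ canonically splits into eigenspaces of the central cocharacter $\std^{-2}$ of the reductive quotient $\bfG_m$ of the mixed-Tate Tannakian group, giving a natural isomorphism $\Gr^W_\bullet V\isom \Gr^M_\bullet\Gr^W_\bullet V$. Composing with the previous step yields the desired isomorphism $\omega_{\mathrm f}\isom \Gr^M_\bullet\Gr^W_\bullet$.

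The main obstacle is the verification that the composite is a natural transformation of tensor functors---that is, functorial in $V$ and compatible with tensor products and morphisms. Functoriality and tensor-compatibility of each step are automatic once the underlying decompositions are given by $\bfG_m$-actions via cocharacters of group schemes; compatibility with an arbitrary morphism $f\colon V\to V'$ reduces to the strictness of morphisms with respect to both $W_\bullet$ and $M_\bullet$, which follows from Lemma \ref{lema-2} combined with the exactness of $v_\ast$. An alternative abstract argument avoids explicit constructions altogether by invoking the nonabelian cohomology sequence for $1\to \pi^{\crys,\unip}(\rho,w)\to \pi^{\crys}(\rho,w)\to \GL_{2,\bfQ_p}\to 1$: Hilbert 90 gives $\rmH^1(\bfQ_p,\GL_{2,\bfQ_p})=\ast$, while $\rmH^1_{\mathrm{fppf}}(\bfQ_p,\bfG_a)=0$ propagates (through twisted forms) to every pro-unipotent group over $\bfQ_p$, forcing $\rmH^1(\bfQ_p,\pi^{\crys}(\rho,w))=\ast$ and hence the triviality of the $\pi^{\crys}(\rho,w)$-torsor $\underline{\Isom}^\otimes(\omega_{\mathrm f},\Gr^M_\bullet\Gr^W_\bullet)$.
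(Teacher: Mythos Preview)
Your proposal is correct. The ``alternative abstract argument'' at the end is precisely the paper's own proof: the paper forms the torsor $T=\underline{\Isom}^\otimes(\Gr^M_\bullet\Gr^W_\bullet,\omega_{\mathrm f})$, notes that its pushforward to the semi-simple subcategory is canonically trivial, lifts the identity to a $\overline{\bfQ_p}$-point of $T$, and then descends to $T(\bfQ_p)$ using the vanishing of $\rmH^1$ for pro-unipotent groups (additive Hilbert~90 propagated through a tower of central $\bfG_a$-extensions). Your primary, constructive approach is genuinely different: rather than arguing abstractly that the torsor has a rational point, you exhibit the isomorphism by hand, first splitting $W_\bullet$ via the lifted cocharacter $\tilde w=\sigma\circ w$ obtained from a chosen Levi section $\sigma$, and then splitting $M_\bullet$ on $\Gr^W_\bullet V$ by observing that $v_*(\Gr^W_\bullet V)$ is already a direct sum of Tate twists (since $V_0=\bfQ_p\oplus\bfQ_p(-1)$ as a $\scrG_{\bfQ}$-module). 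This produces a distinguished isomorphism depending only on the choice of Levi, which dovetails with the paper's later fixing of the splitting $\mathrm{spl}$ in~(\ref{eq5}); the torsor argument is shorter but gives only bare existence. One minor remark: functoriality of your $W$-splitting does not actually require strictness---it follows directly from the fact that morphisms in the Tannakian category are $\pi^{\crys}(\rho,w)$-equivariant and hence $\tilde w$-equivariant.
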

\begin{proof}
	Put $\omega:=\Gr^M_\bullet\Gr^W_\bullet,\ \calC:=\Rep_{\bfQ_p}^{\emptyset}(\pi,\rho,w)$ and consider the torsors
	\[
	T:=\underline{\Isom}_{\calC}^{\otimes}(\omega,\omega_{\mathrm f}),\quad T^{\mathrm{ss}}:=\underline{\Isom}_{\calC^{\mathrm{
				ss}}}(\omega|_{\calC^{\mathrm{
				ss}}},\omega_{\mathrm f}|_{\calC^{\mathrm{
				ss}}})
	\]
	under $\pi^{\crys}(\rho,w)$ and $\GL_{2,\bfQ_p}$, respectively.
	As the restrictions of $\omega $ and $\omega_{\mathrm{f}}$ to $\calC^{\mathrm{ss}}$ is isomorphic canonically,
	$T^{\mathrm{ss}}$ is isomorphic to $\GL_{2,\bfQ_p}$ canonically.
	
	To show the proposition, it suffices to show the existence of a $\bfQ_p$-valued point of $T$.
	Let \[
	T\to T^{\mathrm{ss}}=\GL_{2,\bfQ_p};\quad \alpha\mapsto \alpha|_{\calC^{ss}}
	\]
	to be the morphism defined by the restriction to $\calC^{\mathrm{ss}}$.
	Since this morphism is compatible with the canonical surjection $\pi^{\crys}(\rho,w)\ontomap \GL_{2, \bfQ_p}$,
	$T(\overline{\bfQ_p})\to T^{\mathrm{ss}}(\overline{\bfQ_p})$ is also surjective.
	Let $t\in T(\overline{\bfQ_p})$ be a lift of the identity matrix $I_2\in \GL_2(\bfQ_p)=T^{\mathrm{ss}}(\bfQ_p)$.
	Then, the continuous one cocycle $c\colon \scrG_{\bfQ_p}\to\pi^{\crys}(\rho,w)(\overline{\bfQ_p})$
	defined by
	\[
	c(\sigma)t=\sigma(t)
	\]
	has a image on the prounipotent group $\pi^{\crys,\unip}(\rho,w)(\overline{\bfQ_p})$.
	Since any prounipotent group can be written as a projective limit of \emph{central extensions},
	any one cocycle of $\scrG_{\bfQ_p}$ valued in $\pi^{\crys,\unip}(\rho,w)(\overline{\bfQ_p})$
	is one coboundary by Hilbert's Satz 90.
	Therefore, we have
	\[
	c(\sigma)=\sigma(u)u^{-1},\quad \forall \sigma\in \scrG_{\bfQ_p}
	\]
	for some $u\in \pi^{\crys,\unip}(\rho,w)(\overline{\bfQ_p})$.
	Then, the point $u^{-1}t$ is $\scrG_{\bfQ_p}$-invariant and hence a $\bfQ_p$-rational point of $T$.
	This rational point defines a desired isomorphism.
\end{proof}

\end{document}